\theoremstyle{plain}
\newtheorem{theorem}[equation]{Theorem}
\newtheorem{lemma}[equation]{Lemma}
\newtheorem{corollary}[equation]{Corollary}
\newtheorem{proposition}[equation]{Proposition}
\newtheorem{claim}[equation]{Claim}
\theoremstyle{definition}
\newtheorem{definition}[equation]{Definition}
\theoremstyle{remark}
\newtheorem{remark}[equation]{Remark}
\numberwithin{equation}{section}
\newcommand{\eps}{\varepsilon}
\newcommand{\dist}{\operatorname{dist}}
\newcommand{\td}{\Delta^{\star}}
\newcommand{\wtd}{\widetilde{\Delta}^\star_X}
\newcommand{\ds}{\delta_{\star}}
\newcommand{\ttom}{\widehat{\Omega}}
\newcommand{\tthm}{\widehat{\omega}}
\newcommand{\sss}{\sigma_\star}
\newcommand{\dv}{\operatorname{div}}
\newcommand{\re}{\mathbb{R}}
\newcommand{\rn}{\mathbb{R}^n}
\newcommand{\reu}{\mathbb{R}^{n+1}_+}
\newcommand{\ree}{\mathbb{R}^{n+1}}
\newcommand{\A}{\Lambda}
\newcommand{\dd}{\mathbb{D}}
\newcommand{\ppn}{\mathbb{P}_N}
\newcommand{\tppn}{\widetilde{\mathbb{P}}_N}
\newcommand{\ppsn}{\mathbb{P}_N^\star}
\newcommand{\tppsn}{\widetilde{\mathbb{P}}^\star_N}
\newcommand{\C}{\mathcal{C}}
\newcommand{\om}{\Omega}
\newcommand{\ot}{\Omega_0}
\newcommand{\F}{\mathcal{F}}
\newcommand{\M}{\mathcal{M}}
\newcommand{\W}{\mathcal{W}}
\newcommand{\tU}{\widetilde{U}}
\newcommand{\tQ}{\widetilde{Q}}
\newcommand{\tQs}{\widetilde{Q}_\star}
\newcommand{\Qs}{Q_\star}
\newcommand{\tB}{\widetilde{B}}
\newcommand{\B}{\mathcal{B}}
\newcommand{\oo}{\mathcal{O}}
\newcommand{\mut}{\mathfrak{m}}
\newcommand{\mutt}{\widetilde{\mathfrak{m}}}
\newcommand{\pom}{\partial\Omega}
\newcommand{\hm}{\omega}
\newcommand{\bqo}{B_Q^{\rm big}}
\newcommand{\dqo}{\Delta_Q^{\rm big}}
\renewcommand{\P}{\mathcal{P}}
\newcommand{\oT}{\widetilde{\Omega}}
\renewcommand{\emptyset}{\mbox{\textup{\O}}}
\DeclareMathOperator{\diam}{diam}
\DeclareMathOperator{\interior}{int}
\begin{document}
\allowdisplaybreaks

\title[Uniform Rectifiability and harmonic measure]{Uniform Rectifiability
and harmonic measure IV:  Ahlfors regularity plus Poisson kernels in $L^p$
implies uniform rectifiability.}

\author{Steve Hofmann}

\address{Steve Hofmann
\\
Department of Mathematics
\\
University of Missouri
\\
Columbia, MO 65211, USA} \email{hofmanns@missouri.edu}

\author{Jos\'e Mar{\'\i}a Martell}

\address{Jos\'e Mar{\'\i}a Martell\\
Instituto de Ciencias Matem\'aticas CSIC-UAM-UC3M-UCM\\
Consejo Superior de Investigaciones Cient{\'\i}ficas\\
C/ Nicol\'as Cabrera, 13-15\\
E-28049 Madrid, Spain} \email{chema.martell@icmat.es}



\thanks{The first author was supported by NSF grant DMS-1361701.
The second author was supported by ICMAT Severo Ochoa project SEV-2011-0087. He also acknowledges that the research leading to these results has received funding from the European Research Council under the European Union's Seventh Framework Programme (FP7/2007-2013)/ ERC agreement no. 615112 HAPDEGMT.}

\date{\today}
\subjclass[2000]{31B05, 31B25, 35J08, 42B25, 42B37, 28A75, 28A78}
\keywords{
Harmonic measure, Poisson kernel,
uniform rectifiability, Carleson measures.}

\begin{abstract}
Let $E\subset \ree$, $n\ge 2$, be an Ahlfors-David regular set of dimension $n$.
We show that the weak-$A_\infty$ property of harmonic measure, for the open set
$\Omega:= \ree\setminus E$, implies uniform rectifiability of $E$.
\end{abstract}


\bigskip
\bigskip


\maketitle

{\bf This is a preliminary version of our work on this topic, 
as presented by the first author at the Workshop on Harmonic Analysis and PDE
held at ICMAT in Madrid, in January 2015.  The final published version will be jointly 
authored with K. Nystr\"om and P. Le,
and in addition to the present results, will treat also the analogous theory for the $p$-Laplacian.}

\tableofcontents

\section{Introduction}\label{s1}


In this note, we present a quantitative, scale invariant result of free boundary type.
Somewhat more precisely, let $\om\subset \ree $ be an open set (not necessarily connected)
satisfying an interior Corkscrew condition,
whose boundary is $n$-dimensional Ahlfors-David regular (ADR).   Given these background hypotheses,
we show that if $\hm$, the
harmonic measure for $\om$,
is absolutely continuous with respect to surface measure
$\sigma$, and if the Poisson kernel $k=d\hm/d\sigma$ verifies an appropriate scale invariant higher 
integrability estimate (in particular, if $\hm$ belongs to weak-$A_\infty$ with respect to $\sigma$),
then $\pom$ is uniformly rectifiable, in the sense of \cite{DS1,DS2}.  
Here $\sigma:= H^n|_{\pom}$
is, as usual, the restriction to $\pom$ of $n$-dimensional Hausdorff measure (the 
other notation and terminology used here will be defined in the sequel).   
In particular, our background hypotheses hold in the case that $\om:=\ree\setminus E$ is the
complement of an ADR (hence closed) set of co-dimension 1:  in that case, it is well known that
the Corkscrew condition is verified automatically in $\om$,  i.e., in every ball $B=B(x,r)$
centered on $\pom$, there is some component of $\om\cap B$ that contains a point $Y$ with
$\dist(Y,\pom)\approx r$.  

In previous work with I. Uriarte-Tuero \cite{HMU}, the authors had proved such a result under the additional hypothesis
that $\om$ is a connected domain, satisfying an interior Harnack Chain condition.  In hindsight, under that extra
assumption, one obtains the stronger conclusion that in fact, $\om_{ext}:=\ree\setminus \overline{\om}$ also satisfies a
Corkscrew condition, and hence that $\om$ is an NTA domain in the sense of \cite{JK}; see \cite{AHMNT} for the details.
The new advance in the present paper, then, is the removal of any connectivity hypothesis; in particular, we avoid
the Harnack Chain condition.

Before discussing further historical background, let us now state our main result.  To this end, 
given an open set $\om\subset \ree$, and a Euclidean ball $B=B(x,r)\subset \ree$, 
centered on $\pom$, we let $\Delta =\Delta(x,r):= B\cap\pom$ 
denote the corresponding surface ball, and for a constant $C>0$, we set $C\Delta:=\Delta(x,Cr)$.  
For $X\in \om$, let $\hm^X$ be
harmonic measure for $\om$, with pole at $X$.
As mentioned above,
all other terminology and notation will be defined below.

\begin{theorem}\label{t1} Let $\om\subset \ree$, $n\ge 2$, be an 
open set, whose boundary is Ahlfors-David regular of dimension $n$.
Suppose that there is a constant $C_0\ge 1$, and an exponent $p>1$, such that for every ball 
surface ball $\Delta=\Delta(x,r)$,
with $x\in \pom$ and $0<r<\diam \pom$, there exists $Y_{\Delta}\in B(x,C_0\,r)$ 
with $\dist(Y_\Delta,\pom)\ge C_0^{-1}\,r$, satisfying
\begin{list}{$(\theenumi)$}{\usecounter{enumi}\leftmargin=.8cm
\labelwidth=.8cm\itemsep=0.2cm\topsep=.1cm
\renewcommand{\theenumi}{\alph{enumi}}}

\item \textbf{Bourgain's estimate:} $\hm^{Y_\Delta}(\Delta)\ge C_0^{-1}$.

\item \textbf{Scale-invariant higher integrability:} $\hm^{Y_\Delta}\ll \sigma$ in $C_1\Delta$ and $k^{Y_\Delta}=d\hm^{Y_\Delta}/d\sigma$ satisfies
\begin{equation}\label{eqn:main-SI}
\int_{C_1\Delta} k^{Y_\Delta}(y)^p\,d\sigma(y)
\le
C_0\,\sigma(C_1\Delta)^{1-p},
\end{equation}
\end{list}
where $C_1$ is a large enough constant depending only on $n$ and the ADR constant of  $\pom$.
Then $\pom$ is uniformly rectifiable and moreover the ``UR character'' (see Definition \ref{defurchar}) depends only on $n$, the ADR constants, $p$ and $C_0$.
\end{theorem}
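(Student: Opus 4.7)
The plan is to reduce to the NTA setting of \cite{HMU} by constructing chord-arc sawtooth subdomains via a dyadic stopping time, transferring the hypotheses to these subdomains, and then invoking the David--Semmes principle that ``big pieces of uniformly rectifiable sets implies UR.''

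Fix a surface ball $\Delta_0 = \Delta(x_0,r_0)$ and a dyadic cube $Q_0 \in \dd(\pom)$ with $\ell(Q_0) \approx r_0$ and $Q_0 \subset \Delta_0$. Let $\Omega^0$ denote the connected component of $\Omega$ containing the corkscrew $Y_0 := Y_{\Delta_0}$, so that $\hm^{Y_0}$ is supported on $\partial \Omega^0 \subset \pom$. For a small threshold $\eta>0$ to be chosen, let $\F \subset \dd(Q_0)$ be the family of maximal dyadic cubes $Q\subset Q_0$ for which $\hm^{Y_0}(Q)/\sigma(Q) \leq \eta\,\hm^{Y_0}(Q_0)/\sigma(Q_0)$. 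Applying H\"older to the scale-invariant $L^p$ bound (b) and using Bourgain's estimate (a) at the top scale, one shows that for $\eta$ small enough, $\sigma\bigl(\bigcup_{Q\in \F} Q\bigr) \leq \tfrac12\, \sigma(Q_0)$; consequently the ``good'' set $G_0 := Q_0 \setminus \bigcup_{Q \in \F}Q$ satisfies $\sigma(G_0) \gtrsim \sigma(Q_0)$, and on every cube in $\dd_{\F, Q_0} := \{Q \subset Q_0 : Q \not\subset Q'\text{ for any } Q'\in \F\}$ the density $\hm^{Y_0}(Q)/\sigma(Q)$ is comparable to $\hm^{Y_0}(Q_0)/\sigma(Q_0)$.

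Next, I assemble a sawtooth region $\Omega_{\F,Q_0} \subset \Omega^0$ as a union of Whitney cubes indexed by $\dd_{\F,Q_0}$. Although $\Omega$ need not satisfy a Harnack chain condition, the fact that every Whitney cube in the sawtooth lies in the single component $\Omega^0$, together with the standard sawtooth constructions of Hofmann--Martell, allows one to verify that $\Omega_{\F,Q_0}$ is a chord-arc (hence NTA) domain with uniform constants, and that $G_0 \subset \pom \cap \partial \Omega_{\F,Q_0}$. Using the maximum principle $\omega^X_{\Omega_{\F,Q_0}}(E) \leq \omega^X_{\Omega^0}(E)$ for $E \subset \pom \cap \partial \Omega_{\F,Q_0}$, together with the Ahlfors regularity of $\pom$, one propagates hypotheses (a)--(b) from $\Omega^0$ to $\Omega_{\F,Q_0}$. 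The main theorem of \cite{HMU} then applies in the NTA domain $\Omega_{\F,Q_0}$, giving uniform rectifiability of $\partial \Omega_{\F,Q_0}$ with UR character depending only on the allowed parameters. Since $\sigma(G_0) \gtrsim \sigma(\Delta_0)$ and $G_0$ is contained in the UR set $\partial \Omega_{\F,Q_0}$, we conclude that $\pom$ has big pieces of UR sets at every location and scale, and a theorem of David--Semmes delivers the uniform rectifiability of $\pom$ itself.

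The principal obstacle is the transfer step: without Harnack chains in $\Omega$, standard NTA comparison principles are unavailable, so one cannot directly compare Poisson kernels of $\Omega^0$ and $\Omega_{\F,Q_0}$, and must instead combine the maximum principle with Bourgain's estimate and ADR to recover a scale-invariant $L^p$ bound in the sawtooth. A secondary technical point is verifying that the sawtooth genuinely enjoys chord-arc (or at least NTA) geometry when $\Omega$ itself only satisfies the interior corkscrew condition; this requires care in the choice of Whitney dilation parameters and in the coherence of the stopping family.
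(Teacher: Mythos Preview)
Your approach has a genuine gap at the step you yourself flag as ``secondary'': the claim that the sawtooth $\Omega_{\F,Q_0}$ is a chord-arc (or even NTA) domain.  The standard Hofmann--Martell sawtooth constructions yield NTA/chord-arc domains only when the ambient domain is already 1-sided NTA, because one uses Harnack chains in $\Omega$ to connect the Whitney regions $U_Q$ and to verify the interior Harnack chain condition in the sawtooth.  Here $\Omega$ has only interior corkscrews and ADR boundary; the paper itself notes that each $U_Q$ may have several connected components, and there is no mechanism to chain them together.  Merely restricting to a single connected component $\Omega^0$ of $\Omega$ does not help: $\Omega^0$ still need not have Harnack chains (think of a domain with cusps or long thin tubes), so the sawtooth built inside it need not be NTA.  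Without NTA geometry the appeal to \cite{HMU} is illegitimate, and the whole reduction collapses.

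There is a second gap in the ``transfer'' step.  The maximum principle gives $\omega^X_{\Omega_{\F,Q_0}}(E)\le \omega^X_{\Omega}(E)$ for Borel $E\subset\pom\cap\partial\Omega_{\F,Q_0}$, but this does not by itself yield a scale-invariant $L^p$ bound for the Poisson kernel of the sawtooth at \emph{every} scale and with \emph{every} pole, which is what \cite{HMU} requires.  The boundary $\partial\Omega_{\F,Q_0}$ also contains ``Whitney faces'' away from $\pom$, and you give no control of the sawtooth Poisson kernel there.  In the NTA setting one handles this via the Dahlberg--Jerison--Kenig projection lemma and comparison principles, but those tools again require Harnack chains.

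The paper takes a different route precisely to avoid these connectivity issues.  It performs a stopping-time argument (similar in spirit to yours) to extract an ample regime $\dd_{\F,Q_0}$ on which normalized harmonic measure is bounded above and below, and then, rather than passing to a subdomain and invoking \cite{HMU}, it runs a Lewis--Vogel type argument directly on the Green function.  This shows that cubes in $\dd_{\F,Q_0}$ that are not ``Case~1'' (where a square-function/Carleson packing handles them) satisfy a new geometric condition, the $\eps$-local \emph{Weak Half-Space Approximation} (WHSA).  The paper then proves independently that WHSA implies UR.  The point is that WHSA is exactly what survives of the Lewis--Vogel argument when doubling and Harnack chains are dropped; it replaces the Weak Exterior Convexity used in \cite{LV}, which would have required doubling of harmonic measure.
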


\noindent{\it Remark}.    As mentioned above, the background hypotheses hold in the special
case that $\om:=\ree\setminus E$
is the complement of an $n$-dimensional ADR set $E$, and in that setting, condition (a) is automatically
verified.  Indeed, 
by a result of Bourgain \cite{B} (see Lemma \ref{Bourgainhm} below), 
the Ahlfors-David regularity of the boundary
implies that there is always a point $Y_\Delta$ as above, and a sufficiently large
$C_0$, such that estimate $(a)$ holds;  
in fact, $\hm^Y$ satisfies (a) for every
$Y\in \om\cap B(x, c_1r)$, for  $c_1$ small enough depending only on 
dimension and the ADR constants, and as we have noted, the ADR property ensures that some such $Y$ satisfies
$\dist(Y,\pom)\approx r$.  Thus,  the theorem will hold in this setting, if (b) holds for this  $Y$.

The observations in the preceding remark will allow us to deduce,
as an easy corollary, the following variant of Theorem \ref{t1}
(we shall give the short proof of the corollary in Section \ref{s-cor}). 

\begin{corollary}\label{c1}
Let $E\subset \ree$, $n\ge 2$, be an Ahlfors-David regular set of dimension $n$, and let $\om:=\setminus E$.
Suppose that for every ball $B(x,r)$, $x\in E$, $0<r<\diam E$, and for all $Y\in \Omega\setminus B(x,2r)$,
harmonic measure $\hm^Y\in$ weak-$A_\infty(\Delta(x,r))$, that is, there is 
a constant $C_0\ge 1$ and an exponent $p>1$, each of which is
uniform with respect to $x,r$ and $Y$, 
such that $\hm^{Y}\ll \sigma$ in $\Delta(x,r)$, and $k^{Y}=d\hm^{Y}/d\sigma$ satisfies
\begin{equation}\label{eqn:main-weak-RHP}
\left(\fint_{\Delta'} k^{Y}(z)^p\,d\sigma(z)\right)^{\frac1p}
\le
C_0\,\fint_{2\,\Delta'} k^{Y}(z)\,d\sigma(z),
\end{equation}
for every $\Delta'=B'\cap E$ with $2\,B'\subset B(x,r)$.
Then $E$ is uniformly rectifiable and moreover the ``UR character'' (see Definition \ref{defurchar}) depends only on $n$, the ADR contant of $E$, $p$ and $C_0$.
\end{corollary}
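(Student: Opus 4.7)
The plan is to reduce Corollary \ref{c1} to Theorem \ref{t1} by verifying its two hypotheses (a) and (b) for a well-chosen interior pole. Given any surface ball $\Delta = \Delta(x, r)$ with $x \in E$ and $0 < r < \diam E$, the Remark following Theorem \ref{t1} already supplies a corkscrew $Y_\Delta \in \Omega \cap B(x, C_0 r)$ with $\dist(Y_\Delta, E) \ge C_0^{-1} r$, and hypothesis (a) follows at once from Bourgain's Lemma \ref{Bourgainhm}. The substantive task is to verify hypothesis (b).

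The key (simple) observation is that because $\dist(Y_\Delta, E) \ge C_0^{-1} r$, the pole $Y_\Delta$ is automatically separated from every sufficiently small ball centered on $E$. Setting $\rho := r/(8C_0)$, one checks that $Y_\Delta \in \Omega \setminus B(y, 6\rho)$ for every $y \in E$, so $Y_\Delta$ qualifies as an admissible pole in the corollary's hypothesis with outer ball $B(y, 3\rho)$. This yields the uniform local reverse H\"older inequality
\begin{equation*}
\left(\fint_{\Delta(y,\rho)} (k^{Y_\Delta})^p\,d\sigma\right)^{1/p} \le C_0 \fint_{\Delta(y, 2\rho)} k^{Y_\Delta}\,d\sigma, \qquad y \in E.
\end{equation*}
Next I would cover $C_1 \Delta$ by a finite family $\{\Delta(y_i, \rho)\}_i$ with $y_i \in C_1\Delta$, obtained from a maximal disjoint subfamily of $\{\Delta(y, \rho/2) : y \in C_1\Delta\}$, so that the enlargements $\{\Delta(y_i, 2\rho)\}_i$ have bounded overlap (by ADR). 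Applying the reverse H\"older inequality on each $\Delta(y_i, \rho)$, using ADR to rewrite surface measures in terms of powers of $\rho$, and summing gives
\begin{equation*}
\int_{C_1 \Delta} (k^{Y_\Delta})^p\,d\sigma \lesssim \rho^{n(1-p)} \sum_i \hm^{Y_\Delta}(\Delta(y_i, 2\rho))^p.
\end{equation*}

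Since $\hm^{Y_\Delta}(\Delta(y_i, 2\rho)) \le 1$ and $p \ge 1$, each $p$-th power is bounded by the corresponding first power, and the bounded overlap together with $\hm^{Y_\Delta}(\partial \Omega) \le 1$ then control the sum by a dimensional constant. Because $\rho = r/(8C_0)$ and $\sigma(C_1\Delta) \approx (C_1 r)^n$, I obtain $\rho^{n(1-p)} \le C(n,p,C_0,C_1)\, \sigma(C_1\Delta)^{1-p}$, which is precisely hypothesis (b); Theorem \ref{t1} then delivers the uniform rectifiability of $E$ with the asserted quantitative dependence of the UR character. The only step requiring any real care is the bookkeeping of constants in the covering argument --- the multiplicative factor $(8C_0C_1)^{n(p-1)}$ produced by passing between the small covering scale $\rho$ and the large scale $C_1 r$ is absorbed into the constant of (b), which is allowed to depend on $n$, $p$, $C_0$, and the ADR constants.
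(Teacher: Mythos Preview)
Your proposal is correct and follows essentially the same route as the paper's proof: choose the corkscrew point $Y_\Delta$ via ADR, get (a) from Bourgain, and verify (b) by covering $C_1\Delta$ with surface balls of radius $\rho\approx r$ small enough that $\dist(Y_\Delta,E)\ge C_0^{-1}r$ forces $Y_\Delta$ out of the doubled covering balls, then apply the weak reverse H\"older estimate on each piece and sum using $\hm^{Y_\Delta}(\pom)\le 1$. The paper's version differs only cosmetically---it fixes the covering radius as $c_1 r/4$ (with $c_1$ the corkscrew constant) and records that the number $N$ of covering balls is uniformly bounded, rather than phrasing it through bounded overlap; since your $\rho=r/(8C_0)$ is comparable to $r$, your covering also has bounded cardinality and the two arguments are equivalent.
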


Combining Theorem \ref{t1} with the results in \cite {BH},
we obtain as an immediate consequence a ``big pieces"
characterization of uniformly rectifiable sets of co-dimension 1, in terms of
harmonic measure.   Here and in the sequel, given an ADR set $E$,  $Q$ will denote a ``dyadic cube"
on $E$ in the sense of \cite{DS1,DS2} and \cite{Ch}, and $\dd(E)$ will denote the collection
of all such cubes;  see Lemma \ref{lemmaCh} below.

\begin{theorem}\label{t2} Let $E\subset \ree$ be an $n$-dimensional ADR set.
Let $\om := \ree\setminus E$. 
Then $E$ is uniformly rectifiable if and only if it
has ``big pieces of good harmonic measure estimates" 
in the  following sense: 
for each $Q \in \dd (E)$ there exists an open set $\oT=\oT_Q$ 
with the following properties, with uniform control of the various implicit
constants:
\begin{itemize}
\item $\partial\oT$ is ADR;
\item  the interior Corkscrew condition holds in $\oT$; 
\item $\partial\oT$ has a ``big pieces" overlap with $\pom$, in the sense that
\begin{equation}\label{bp}\sigma(Q\cap \partial\oT) \gtrsim \sigma(Q)\,;
\end{equation}
\item for each surface ball
$\Delta = \Delta(x,r) := B(x,r) \cap \partial\oT$, with $x \in \partial\oT$ and $r \in (0, \diam(\oT))$, 
there is an interior corkscrew point $X_\Delta\in \ot$, such that 
$\hm^{X_\Delta}:=\hm^{X_\Delta}_{\widetilde{\om}}$, the harmonic measure for $\oT$ with pole at $X_\Delta$, 
satisfies
$\hm^{X_\Delta}(\Delta) \gtrsim 1$, and
belongs to 
weak-$A_\infty(\Delta)$.
\end{itemize}
\end{theorem}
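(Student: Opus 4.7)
The plan is to prove the two implications separately. The forward direction, that uniform rectifiability of $E$ yields the stated big-pieces property, is precisely the content of the cited paper \cite{BH}, which we invoke as a black box. The substantive work is the converse: assuming that for each $Q \in \dd(E)$ there exists $\oT_Q$ with the four listed properties, we must show that $E$ is uniformly rectifiable.

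Fix $Q \in \dd(E)$ and consider $\oT = \oT_Q$. The first two bullets say that $\oT$ is an open set with ADR boundary and interior Corkscrew, matching the background hypotheses of Theorem \ref{t1}. The fourth bullet states that at every surface ball $\Delta \subset \partial\oT$ there is a corkscrew point $X_\Delta$ for which $\hm^{X_\Delta}_{\oT}$ satisfies Bourgain's estimate $\hm^{X_\Delta}(\Delta) \gtrsim 1$ and also lies in weak-$A_\infty(\Delta)$. Unravelling the definition of weak-$A_\infty$ (i.e., the reverse-H\"older-type self-improvement for the Poisson kernel), and combining it with the trivial bound $\hm^{X_\Delta}(C_1\Delta) \le 1$, yields the scale-invariant higher integrability estimate \eqref{eqn:main-SI}. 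Thus Theorem \ref{t1} applies to $\oT$, and we conclude that $\partial\oT$ is uniformly rectifiable, with UR character depending only on $n$, the ADR constants, the exponent $p$, and the weak-$A_\infty$ constants --- crucially, not on $Q$.

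With the third bullet in hand, we now have, for every $Q \in \dd(E)$, a set $\partial\oT_Q$ of uniform UR character satisfying
\[
\sigma\bigl(Q \cap \partial\oT_Q\bigr) \gtrsim \sigma(Q).
\]
This is precisely the ``big pieces of uniformly rectifiable sets'' hypothesis for $E$, at every scale and location. By the classical stability principle of David and Semmes \cite{DS1,DS2} --- namely that a set which has big pieces of UR sets at every scale is itself UR --- we conclude that $E$ is uniformly rectifiable, with UR character quantitatively controlled by the data.

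The main obstacle, beyond invoking \cite{BH} and the David--Semmes stability theorem, is ensuring that the quantitative constants propagate uniformly through the argument: the reverse-H\"older exponent and constant emerging from weak-$A_\infty$ on $\partial\oT_Q$ must be independent of $\Delta$ and of $Q$, so that Theorem \ref{t1} produces a UR character for $\partial\oT_Q$ that does not degenerate as $Q$ varies, which is exactly what the David--Semmes big-pieces-implies-UR principle requires as input. All of this uniformity is built into the phrase ``with uniform control of the various implicit constants'' in the hypotheses, and is preserved at each step of the argument above.
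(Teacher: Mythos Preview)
Your proposal is correct and follows essentially the same approach as the paper: cite \cite{BH} for the ``only if'' direction, apply Theorem~\ref{t1} to each $\oT_Q$ to obtain that $\partial\oT_Q$ is UR with uniform UR character, and then invoke the David--Semmes big-pieces stability principle from \cite{DS2} to conclude that $E$ is UR. The paper's own argument is in fact even more terse than yours, simply stating ``by Theorem~\ref{t1}, the subdomains $\oT$ have uniformly rectifiable boundaries, with uniform control of the UR character of each $\partial\oT$, and thus $E$ is uniformly rectifiable, by \cite{DS2}.''
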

The ``only if" direction is proved in \cite{BH}, and in fact the open sets $\oT$ constructed there even 
satisfy a 2-sided Corkscrew condition, and moreover, $\oT\subset \om$,
with $\diam(\oT) \approx \diam(Q)$.    
To obtain the converse direction, we simply observe that by Theorem
\ref{t1}, the subdomains $\oT$ have uniformly rectifiable boundaries, with uniform control of the ``UR" character
of each $\partial\oT$, and thus $E$ is uniformly rectifiable, by \cite{DS2}.

Let us now discuss some related earlier results.  Our approach in the present paper
owes a great deal to prior work of Lewis and Vogel \cite{LV}, who proved a version of Theorem
\ref{t1} under the stronger hypothesis that $\hm$ itself is an Ahlfors-David regular measure,
and thus the Poisson kernel is a bounded, accretive function, i.e., $k\approx 1$.  With this assumption, 
they were able to show that $\pom$ satisfies the so-called ``Weak Exterior Convexity" (WEC) condition, which characterizes
uniform rectifiability  \cite{DS2}.    To weaken the hypotheses on $\hm$, as we have done here, requires
two further considerations.  The first is quite natural in this context:  a stopping time argument, in the spirit of the
proofs of the Kato square root conjecture \cite{HMc}, \cite{HLMc}, \cite{AHLMcT} (and of local $Tb$ theorems
\cite{Ch}, \cite{AHMTT}, \cite{H}), by means of which we extract ample dyadic sawtooth regimes on which averages
of harmonic measure are bounded and accretive (see Lemma \ref{l4.4} below).  This will allow us to
use the arguments of \cite{LV} within these good sawtooth regions.
The second new consideration
is necessitated by the fact that in our setting, the doubling property may fail for harmonic measure.  In the absence of doubling, 
we are unable to
obtain the  WEC condition directly.  Nonetheless, we shall be able to follow very closely the arguments of \cite{LV}
up to a point, to obtain a condition on $\pom$ that we have called the ``Weak Half Space Approximation" 
(WHSA) property.  Indeed, extracting the essence of the \cite{LV} argument, while dispensing with the doubling 
property, one realizes that the WHSA is precisely what one obtains.  To fix ideas, and for the sake of
self-containment, we shall summarize this fact, and present the argument of \cite{LV} here as Lemma \ref{LVlemma}.
Of course, in the proof of Lemma \ref{LVlemma},
we shall follow \cite{LV} quite closely.  Finally then, having obtained
that $\pom$ satisfies the WHSA property, we are then left with showing that WHSA implies uniform rectifiability:
\begin{proposition}\label{prop2.20}
An $n$-dimensional ADR set $E\subset \ree$ is uniformly rectifiable if and only if it satisfies the
WHSA property.
\end{proposition}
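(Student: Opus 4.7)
The plan is to prove both directions of the equivalence, with the substantive work lying in the WHSA $\Rightarrow$ UR direction.

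For the forward implication (UR $\Rightarrow$ WHSA), I would invoke one of the standard David--Semmes characterizations of uniform rectifiability, namely the bilateral weak geometric lemma (BWGL) or the bilateral approximation by unions of planes (BAUP) condition. These state that, for a UR set $E$, the collection of dyadic cubes $Q$ at which $E$ fails to be $\varepsilon$-bilaterally close to an affine hyperplane at scale $\ell(Q)$ forms a Carleson-sparse family. Since WHSA is a strictly one-sided, weaker variant -- asking approximation of $E$ by the boundary of a half-space rather than by a two-sided flatness condition, but still in a Carleson sense -- BWGL directly supplies the required approximating planes at most scales. For each such good cube, one selects the half-space to be the connected component of the complement of the approximating plane that contains an interior corkscrew of $\Omega = \mathbb{R}^{n+1}\setminus E$ (such a point exists on at least one side whenever the bilateral flatness is sufficiently strong, by ADR together with the Bourgain-type lemma quoted earlier). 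This yields WHSA with constants controlled by the UR character.

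For the reverse implication (WHSA $\Rightarrow$ UR), the strategy is to derive BAUP, which is equivalent to UR for ADR sets. WHSA supplies, for a Carleson-full family of \emph{good} cubes $Q$, an approximating plane $P_Q$ such that $E\cap B_Q$ lies in a thin neighborhood of $P_Q$ on the side where $\Omega$ locally resembles a half-space, but a priori no control on the opposite side. To upgrade this to bilateral control, I would run a stopping-time argument: call a good cube $Q$ \emph{bad} if $E$ has non-trivial mass on the wrong side of $P_Q$ at scale $\ell(Q)$. If $Q$ is bad, a subcube $Q' \subset Q$ witnessing such mass is itself good (up to a Carleson-charge error), and WHSA at $Q'$ furnishes a second plane $P_{Q'}$; elementary geometry forces $P_{Q'}$ to be almost parallel to $P_Q$, since the common points of $E$ near $Q'$ must lie in a thin strip around both. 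Aggregating, the restriction of $E$ to the parent cube is well approximated bilaterally by a union of a uniformly bounded number of parallel planes -- which is admissible for BAUP -- while those cubes for which the process cannot terminate are shown to satisfy a Carleson packing condition by iterating the WHSA Carleson hypothesis.

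The main obstacle, and the technical heart of the argument, is precisely this upgrade from one-sided to bilateral flatness. Because WHSA arises in a non-doubling setting (which was the whole point of replacing WEC with WHSA in the Lewis--Vogel scheme), one cannot use doubling-based stopping-time devices on $E$; the orientations of the half-spaces have to be tracked purely geometrically, and the charging of bad cubes must be arranged so that the losses are absorbed by the original WHSA Carleson packing rather than by a measure on $E$. A secondary subtlety is the uniform bound on the number of parallel planes produced by the iteration, needed for BAUP to hold with acceptable quantitative constants; this should follow from a dimensional volume count involving only the ADR constants of $E$ and the WHSA thinness parameter.
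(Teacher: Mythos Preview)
Your proposal is essentially correct and follows the same route as the paper: reduce WHSA $\Rightarrow$ UR to verifying BAUP (at parameter $\sqrt{\varepsilon}$), by showing that whenever all cubes $Q'$ near $Q$ at scales between $\varepsilon^{3/2}\ell(Q)$ and $\ell(Q)$ satisfy the $\varepsilon$-local WHSA, then $Q$ satisfies the $\sqrt{\varepsilon}$-local BAUP; the remaining cubes are charged to a modified Carleson measure built from the WHSA packing.

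Two small points of comparison. First, you frame the construction as an open-ended iteration producing a ``uniformly bounded number'' of parallel planes, with the bound left as a secondary issue. The paper is sharper here: it shows directly that \emph{two} planes always suffice. After producing $P$ for $Q$ and $P'$ for a subcube $Q'$ containing a stray point of $10Q$, one argues by contradiction that no point of $10Q$ can be $\sqrt{\varepsilon}\ell(Q)$-far from $P\cup P'$: such a point would spawn a third plane $P''$ with normal $\nu''$ also nearly $-e_{n+1}$, and then the half-space $H''$ would be forced to capture points of $E$ lying near $P'$, violating the WHSA emptiness condition for $Q''$. So no volume count or iteration is needed. Second, the ``charging'' step is handled in the paper not by a stopping-time tree but by simply declaring $\alpha_Q=\sigma(Q)$ whenever \emph{any} cube in the local family $\mathbb{D}_\varepsilon(Q)$ is WHSA-bad; this modified measure $\mathfrak{m}$ is immediately Carleson because each bad cube $Q'$ can appear in only boundedly many $\mathbb{D}_\varepsilon(Q)$, with $\sigma(Q)\leq C_\varepsilon\sigma(Q')$.
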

We shall give the definition of WHSA in Section \ref{s2}, 
and  the proof of the proposition in Section \ref{s7}.  While the WHSA condition, 
per se, is new, even in this last step we shall make use of a modified version of part of the argument in \cite{LV}.

 We note that in \cite{LV}, the authors treated also the case that the ``$p$-harmonic measure" (i.e., the
Riesz measure associated to a non-negative $p$-harmonic function vanishing on a surface ball) was ADR,
for all $1<p<\infty$.   Of course, the case $p=2$ corresponds to the classical Laplacian.
In a forthcoming joint paper with K. Nystr\"om and P. Le, we plan to extend the 
results of the present paper to the case of the $p$-Laplacian, with $1<p<\infty$.

To provide some additional context, we mention that out results here
may be viewed as a ``large constant'' analogue of a result of
Kenig and Toro \cite{KT3}, which states that in the presence of a Reifenberg flatness
condition and Ahlfors-David regularity,  $\log k \in VMO$ implies that the unit normal $\nu$ to the boundary  belongs to
$VMO$, where $k$ is the Poisson kernel with pole at some fixed point. 
Moreover, under the same background hypotheses,
the condition that $\nu \in VMO$ is equivalent to
a uniform rectifiability (UR) condition with vanishing trace,
thus $\log k \in VMO \implies vanishing \,\, UR,$ given sufficient Reifenberg flatness. On the other hand, our
large constant version ``almost'' says  ``$\,\log k \in BMO\implies UR\,$''.
Indeed,  it is well known that the $A_\infty$ condition
(i.e.,  weak-$A_\infty$ plus the doubling property) implies that $\log k \in BMO$, while
if $\log k \in BMO$ with small norm, then $k\in A_\infty$.  We further note that, in turn, the results of \cite{KT3}
may be viewed as an ``endpoint" version of the free boundary results of \cite{AC} and \cite{Je},
which say, again in the presence of Reifenberg flatness, that H\"older continuity of 
$\log k$ implies that of the unit normal $\nu$ (and indeed, that $\pom$ is of class $C^{1,\alpha}$ for some $\alpha>0$).

The paper is organized as follows.  In Section \ref{s2}, we state several definitions and basic lemmas.
In Section \ref{s4}, we begin the proof of Theorem \ref{t1} with some preliminary arguments, and in Section \ref{s6},
we complete the proof, modulo Proposition \ref{prop2.20}, following the arguments of \cite{LV}.  In Section \ref{s7},
we give the proof of Proposition \ref{prop2.20}, i.e., the proof of the fact that the WHSA condition 
implies uniform rectifiability.  Finally, in Section \ref{s-cor},  we give the (very short) proof of Corollary \ref{c1}.




\section{Preliminaries}\label{s2}

\begin{definition}\label{defadr} ({\bf  ADR})  (aka {\it Ahlfors-David regular}).
We say that a  set $E \subset \ree$, of Hausdorff dimension $n$, is ADR
if it is closed, and if there is some uniform constant $C$ such that
\begin{equation} \label{eq1.ADR}
\frac1C\, r^n \leq \sigma\big(\Delta(x,r)\big)
\leq C\, r^n,\,\,\,\forall r\in(0,\diam (E)),x \in E,
\end{equation}
where $\diam(E)$ may be infinite.
Here, $\Delta(x,r):= E\cap B(x,r)$ is the ``surface ball" of radius $r$,
and $\sigma:= H^n|_E$ 
is the ``surface measure" on $E$, where $H^n$ denotes $n$-dimensional
Hausdorff measure.
\end{definition}

\begin{definition}\label{defur} ({\bf UR}) (aka {\it uniformly rectifiable}).
An $n$-dimensional ADR (hence closed) set $E\subset \ree$
is UR if and only if it contains ``Big Pieces of
Lipschitz Images" of $\rn$ (``BPLI").   This means that there are positive constants $\theta$ and
$M_0$, such that for each
$x\in E$ and each $r\in (0,\diam (E))$, there is a
Lipschitz mapping $\rho= \rho_{x,r}: \rn\to \ree$, with Lipschitz constant
no larger than $M_0$,
such that
$$
H^n\Big(E\cap B(x,r)\cap  \rho\left(\{z\in\rn:|z|<r\}\right)\Big)\,\geq\,\theta\, r^n\,.
$$
\end{definition}

We recall that $n$-dimensional rectifiable sets are characterized by the
property that they can be
covered, up to a set of
$H^n$ measure 0, by a countable union of Lipschitz images of $\rn$;
we observe that BPLI  is a quantitative version
of this fact.

We remark
that, at least among the class of ADR sets, the UR sets
are precisely those for which all ``sufficiently nice" singular integrals
are $L^2$-bounded  \cite{DS1}.    In fact, for $n$-dimensional ADR sets
in $\ree$, the $L^2$ boundedness of certain special singular integral operators
(the ``Riesz Transforms"), suffices to characterize uniform rectifiability (see \cite{MMV} for the case $n=1$, and
\cite{NToV} in general).
We further remark that
there exist sets that are ADR (and that even form the boundary of a domain satisfying
interior Corkscrew and Harnack Chain conditions),
but that are totally non-rectifiable (e.g., see the construction of Garnett's ``4-corners Cantor set"
in \cite[Chapter1]{DS2}).  Finally, we mention that there are numerous other characterizations of UR sets
(many of which remain valid in higher co-dimensions); see \cite{DS1,DS2}, and in particular
Theorem \ref{t2.7} below.  In this paper, we shall also present a new characterization of UR sets of co-dimension 1
(see Proposition \ref{prop2.20} below),
which will be very useful in the proof of Theorem \ref{t1}.

\begin{definition}\label{defurchar} ({\bf ``UR character"}).   Given a UR set $E\subset \ree$, its ``UR character"
is just the pair of constants $(\theta,M_0)$ involved in the definition of uniform rectifiability,
along with the ADR constant; or equivalently,
the quantitative bounds involved in any particular characterization of uniform rectifiability.
\end{definition}

\begin{lemma}\label{lemmaCh}({\bf Existence and properties of the ``dyadic grid''})
\cite{DS1,DS2}, \cite{Ch}.
Suppose that $E\subset \ree$ is closed $n$-dimensional ADR set.  Then there exist
constants $ a_0>0,\, \gamma>0$ and $C_*<\infty$, depending only on dimension and the
ADR constants, such that for each $k \in \mathbb{Z},$
there is a collection of Borel sets (``cubes'')
$$
\mathbb{D}_k:=\{Q_{j}^k\subset E: j\in \mathfrak{I}_k\},$$ where
$\mathfrak{I}_k$ denotes some (possibly finite) index set depending on $k$, satisfying

\begin{list}{$(\theenumi)$}{\usecounter{enumi}\leftmargin=.8cm
\labelwidth=.8cm\itemsep=0.2cm\topsep=.1cm
\renewcommand{\theenumi}{\roman{enumi}}}

\item $E=\cup_{j}Q_{j}^k\,\,$ for each
$k\in{\mathbb Z}$

\item If $m\geq k$ then either $Q_{i}^{m}\subset Q_{j}^{k}$ or
$Q_{i}^{m}\cap Q_{j}^{k}=\emptyset$.

\item For each $(j,k)$ and each $m<k$, there is a unique
$i$ such that $Q_{j}^k\subset Q_{i}^m$.


\item $\diam\big(Q_{j}^k\big)\leq C_* 2^{-k}$.

\item Each $Q_{j}^k$ contains some ``surface ball'' $\Delta \big(x^k_{j},a_02^{-k}\big):=
B\big(x^k_{j},a_02^{-k}\big)\cap E$.

\item $H^n\big(\big\{x\in Q^k_j:{\rm dist}(x,E\setminus Q^k_j)\leq \varrho \,2^{-k}\big\}\big)\leq
C_*\,\varrho^\gamma\,H^n\big(Q^k_j\big),$ for all $k,j$ and for all $\varrho\in (0,a_0)$.
\end{list}
\end{lemma}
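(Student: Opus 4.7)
The plan is to follow the classical Christ/David-Semmes construction: at each dyadic scale, choose a maximal net of well-separated points on $E$, organize these nets into a rooted tree via a ``parent'' assignment between consecutive scales, and then take each cube to be the closure (essentially) of all $E$-descendants of a given net point. Properties (i)--(v) will fall out of the tree combinatorics; the thin-boundary estimate (vi) will be the main technical obstacle and is where ADR enters essentially.

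First, for each $k\in\ZZ$ I would fix a maximal $2^{-k}$-separated subset $\{x_j^k\}_{j\in\mathfrak{I}_k}\subset E$, so that $\dist(x_i^k,x_j^k)\ge 2^{-k}$ for $i\neq j$ and every point of $E$ lies within distance $2^{-k}$ of some $x_j^k$. Then I would define a parent map by assigning to each $x_i^{k+1}$ a unique $x_j^k$ among those at distance $\le 2^{-k}$ (such a $j$ exists by maximality at level $k$), breaking ties via a fixed well-ordering of the indices. This yields an infinite rooted tree whose vertex set at height $k$ is $\mathfrak{I}_k$. I would then set
\[
Q_j^k := \overline{\bigcup_{m\ge k}\{x_i^m : x_i^m\text{ has ancestor }x_j^k\}}\cap E,
\]
or rather a canonical Borel refinement that makes the $Q_j^k$ pairwise disjoint at each fixed scale (assigning each $y\in E$ to the cube determined by its ``limit ancestor chain'' of nearest net points, again with tiebreaking by the fixed ordering).

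With this setup, (i)--(iii) follow directly from the tree structure; (iv) holds since any descendant of $x_j^k$ lies within $\sum_{\ell\ge 0}2 \cdot 2^{-(k+\ell)}\lesssim 2^{-k}$ of $x_j^k$, giving the constant $C_*$; and (v) follows because, for $a_0$ sufficiently small compared to the net-separation, any point within $a_0\, 2^{-k}$ of $x_j^k$ is strictly closer to $x_j^k$ than to any other level-$k$ center, and so remains assigned to the descendant chain of $x_j^k$ under all further refinements.

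The genuine difficulty is (vi). A point $x\in Q_j^k$ within $\varrho\,2^{-k}$ of $E\setminus Q_j^k$ must, at some descendant scale $m\ge k$, lie in a sub-cube whose center is near the ``interface'' between competing net points at scale $m$. The idea is to estimate, at each scale $m$, the $\sigma$-measure of the points trapped in a thin corridor of width $\varrho\,2^{-k}$ around the cell-boundaries at scale $m$, using the upper ADR bound to dominate each annular slice by $\lesssim (\varrho\,2^{-k})\cdot(2^{-m})^{n-1}$, and then summing over $m$ with a geometric improvement at deeper scales to obtain $\sigma\{x\in Q_j^k:\dist(x,E\setminus Q_j^k)\le \varrho\,2^{-k}\}\le C_*\varrho^\gamma\sigma(Q_j^k)$. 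The main obstacle is that the ``near-boundary'' events at consecutive scales are not independent, so a naive sum loses a logarithmic factor instead of giving polynomial decay in $\varrho$; the remedy, following Christ, is a stopping-time/iterative argument within the tree that packages these estimates as a geometric series with contraction factor strictly less than $1$. This is the step that uses the full ADR hypothesis (both upper and lower bounds), not merely doubling, and is where the positive exponent $\gamma$ arises.
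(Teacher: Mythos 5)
This lemma is cited by the paper (to \cite{Ch} and \cite{DS1,DS2}) rather than proved, so there is no in-paper argument to compare against; your sketch does follow the standard Christ/David--Semmes net-and-tree construction, which is the right framework. That said, two specific points in your outline would not survive scrutiny as written.

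First, your verification of property (v) is incorrect for base-$2$ nets. The observation that a point $y\in B(x_j^k,a_0 2^{-k})$ is nearer to $x_j^k$ than to any other level-$k$ center does not imply that $y$'s assigned chain of finer-scale net points has $x_j^k$ as its level-$k$ ancestor: the net point $x_{j_m}^m$ nearest to $y$ at scale $m>k$ has its level-$k$ ancestor at distance up to $\sum_{s\ge k}2^{-s}=2\cdot 2^{-k}$ from $x_{j_m}^m$, which is comparable to the separation of the level-$k$ net, so the ancestor need not be $x_j^k$. This is exactly the known obstacle to running Christ's construction directly with $\delta=1/2$; Christ takes $\delta$ small so that $\sum_{m>0}\delta^m$ is much smaller than the separation scale, and the reduction from small $\delta$ back to $\delta=1/2$ requires an extra argument, as the paper itself flags (``In fact, one may always take $\delta=1/2$ (cf.\ \cite{HMMM})''). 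Either take $\delta$ small and relabel scales, or cite that reduction; as stated your proof of (v) has a gap.

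Second, in your sketch of property (vi), the estimate that each ``annular slice'' (the $\varrho\,2^{-k}$-neighborhood of scale-$m$ cell interfaces) has $\sigma$-measure $\lesssim (\varrho\,2^{-k})\,(2^{-m})^{n-1}$ is not justified for a general $n$-dimensional ADR set in $\ree$: the cell interfaces are not $(n-1)$-rectifiable objects in any sense you can use, and the upper ADR bound alone does not give a codimension-one gain for neighborhoods of such sets. Christ's actual proof of the small-boundary property proceeds by a different iteration (essentially a stopping-time/self-improvement argument on the relative measure of the boundary strip inside sub-cubes) and, importantly, holds for arbitrary spaces of homogeneous type --- i.e.\ it uses only doubling, not the two-sided ADR bounds. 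So your closing claim that (vi) ``uses the full ADR hypothesis\ldots not merely doubling'' is also incorrect; ADR merely supplies a quantitative doubling constant. The rest of your outline (construction of nets, the parent tree, properties (i)--(iv)) is standard and fine.
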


Let us make a few remarks are concerning this lemma, and discuss some related notation and terminology.

\begin{list}{$\bullet$}{\leftmargin=0.4cm  \itemsep=0.2cm}

\item In the setting of a general space of homogeneous type, this lemma has been proved by Christ
\cite{Ch}, with the
dyadic parameter $1/2$ replaced by some constant $\delta \in (0,1)$.
In fact, one may always take $\delta = 1/2$ (cf.  \cite[Proof of Proposition 2.12]{HMMM}).
In the presence of the Ahlfors-David
property (\ref{eq1.ADR}), the result already appears in \cite{DS1,DS2}.

\item  For our purposes, we may ignore those
$k\in \mathbb{Z}$ such that $2^{-k} \gtrsim {\rm diam}(E)$, in the case that the latter is finite.

\item  We shall denote by  $\mathbb{D}=\mathbb{D}(E)$ the collection of all relevant
$Q^k_j$, i.e., $$\mathbb{D} := \cup_{k} \mathbb{D}_k,$$
where, if $\diam (E)$ is finite, the union runs
over those $k$ such that $2^{-k} \lesssim  {\rm diam}(E)$.

\item Properties $(iv)$ and $(v)$ imply that for each cube $Q\in\mathbb{D}_k$,
there is a point $x_Q\in E$, a Euclidean ball $B(x_Q,r)$ and a surface ball
$\Delta(x_Q,r):= B(x_Q,r)\cap E$ such that
$r\approx 2^{-k} \approx {\rm diam}(Q)$
and \begin{equation}\label{cube-ball}
\Delta(x_Q,r)\subset Q \subset \Delta(x_Q,Cr),\end{equation}
for some uniform constant $C$.
We shall denote this ball and surface ball by
\begin{equation}\label{cube-ball2}
B_Q:= B(x_Q,r) \,,\qquad\Delta_Q:= \Delta(x_Q,r),\end{equation}
and we shall refer to the point $x_Q$ as the ``center'' of $Q$.

\item Given a dyadic cube $Q\in\dd$, we define its ``$\kappa$-dilate"  by
\begin{equation}\label{dilatecube}
\kappa Q:= E\cap B\left(x_Q,\kappa \diam(Q)\right).
\end{equation}

\item For a dyadic cube $Q\in \mathbb{D}_k$, we shall
set $\ell(Q) = 2^{-k}$, and we shall refer to this quantity as the ``length''
of $Q$.  Clearly, $\ell(Q)\approx \diam(Q).$

\item For a dyadic cube $Q \in \mathbb{D}$, we let $k(Q)$ denote the ``dyadic generation''
to which $Q$ belongs, i.e., we set  $k = k(Q)$ if
$Q\in \mathbb{D}_k$; thus, $\ell(Q) =2^{-k(Q)}$.

\end{list}

\begin{definition} ({\bf ``$\eps$-local BAUP"})\label{def2.4} Given $\eps>0$,
we shall say that $Q\in\dd(E)$ satisfies the
$\eps$-{\it local BAUP} condition if there is a family $\mathcal{P}$ of hyperplanes (depending on $Q$)
such that every point in $10Q$ is at a distance at most $\eps \ell(Q)$ from $\cup_{P\in\mathcal{P}}P$, and
every point in $\left(\cup_{P\in\mathcal{P}}P\right) \cap B(x_Q, 10\diam(Q))$ is at a distance at most $\eps\ell(Q)$
from $E$.

\end{definition}

\begin{definition}\label{def2.5} ({\bf BAUP}).
We shall say that an $n$-dimensional ADR set $E\subset \ree$
satisfies the condition of {\it Bilateral Approximation by
Unions of Planes}  (``BAUP"),
if for some $\eps_0>0$, and for every positive
$\eps<\eps_0$, there is a constant $C_0=C_0(\eps)$ such that the set $\B$ of bad cubes in $\dd(E)$, for which the
$\eps$-local
BAUP condition 
fails, satisfies the packing condition
\begin{equation}\label{eq2.pack}
\sum_{Q'\subset Q,\, Q'\in\B} \sigma(Q')\,\leq \,C_0 \,\sigma(Q)\,,\qquad \forall\, Q\in \dd(E)\,.
\end{equation}
\end{definition}

For future reference, we recall the following result of David and Semmes \cite{DS2}.

\begin{theorem}[{\cite[Theorem I.2.18, p. 36]{DS2}}]\label{t2.7}
Let $E\subset \ree$ be an $n$-dimensional ADR set. Then, $E$ is uniformly rectifiable if and only if it satisfies
BAUP.
\end{theorem}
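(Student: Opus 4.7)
My plan is to prove Proposition~\ref{prop2.20} by bypassing the BAUP characterization and instead arguing directly through the big-pieces-of-Lipschitz-graphs (BPLG) characterization of uniform rectifiability, which for $n$-dimensional ADR sets in $\ree$ is known from \cite{DS1,DS2} to be equivalent to UR. The one-sidedness that distinguishes WHSA from BAUP matches exactly the one-sidedness of a graph, so BPLG is the natural target.

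For the direction WHSA $\Rightarrow$ UR, I would fix $Q_0\in\dd(E)$ and run a corona-type stopping-time decomposition on its descendants. Starting from $Q_0$, I stop at a subcube $Q$ whenever either (i)~$Q$ fails the $\eps$-WHSA condition, or (ii)~the unit normal to the hyperplane $P_Q$ produced by WHSA tilts by more than a small threshold $\eta$ from the normal of the parent of $Q$. Family~(i) packs directly by the WHSA Carleson hypothesis; family~(ii) packs by a quantitative tilting argument, since the one-sided half-space $H_Q$ forces $E$ to lie on one side of $P_Q$ and this rigidity controls how much the hyperplane can rotate as one passes between scales. Inside a coherent corona region~$F$ all of the $P_Q$ are nearly parallel to a single reference plane $P_F$; the empty half-space $H_Q$ prevents $E$ from having two ``sheets'' over $P_F$, and a Whitney-cover plus smoothing yields a Lipschitz graph $\Gamma_F$ of small Lipschitz constant whose trace contains the projections of the top cubes of $F$ onto $P_F$. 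The top region attached to $Q_0$ then covers a fixed fraction $\theta\sigma(Q_0)$, giving BPLG for $Q_0$ and hence UR.

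For the converse UR $\Rightarrow$ WHSA, I would use BPLG directly. Fix $Q\in\dd(E)$; by BPLG there is a Lipschitz graph $\Gamma_Q$ of controlled Lipschitz constant $L$ covering a subset of $Q$ of surface measure $\gtrsim\sigma(Q)$. Let $P_Q$ be a best-fitting hyperplane to $\Gamma_Q$ over $B_Q$; Dorronsoro/Jones-type control of the bilateral $\beta$-coefficient for Lipschitz graphs gives $\mathrm{dist}(y,P_Q)\lesssim L\ell(Q)$ for $y$ in a big piece of $E\cap 10Q$, while $\mathrm{dist}(z,E)\lesssim L\ell(Q)$ for $z\in P_Q\cap B(x_Q,10\diam Q)$. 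Taking $H_Q$ to be the open half-space on the ``below graph'' side, pushed down by a small multiple of $L\ell(Q)$, yields a half-space satisfying the WHSA approximation at $Q$ with parameter $\eps\approx L$. The cubes $Q$ admitting no such big-piece graph form a Carleson-packed family by the standard corona construction underlying BPLG, which gives the packing on $\B$ required by WHSA.

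The main obstacle will be the WHSA $\Rightarrow$ UR direction, specifically the synthesis of the single Lipschitz graph $\Gamma_F$ inside a coherent corona region from the discrete, scale-dependent half-space data $(P_Q,H_Q)$. The quantitative statement I need is that the locus $\{\dist(\cdot,P_F)\le\eps\ell(F)\}\cap E$ above $P_F$ is empty, which is exactly what the one-sided half-space property is supposed to deliver, but transferring that property from the individual hyperplanes $P_Q$ (which only approximate $P_F$) to $P_F$ itself requires a careful telescoping argument that loses a fixed factor depending on $\eta$. Balancing $\eta$ against the packing estimate from stopping family~(ii), so that the final Lipschitz constant of $\Gamma_F$ is small enough for the graph to genuinely lie near $E$ on a big piece, is the delicate point of the construction.
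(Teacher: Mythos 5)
You have not addressed the stated statement. Theorem~\ref{t2.7} (uniform rectifiability is equivalent to BAUP) is a result of David and Semmes, quoted verbatim from \cite[Theorem I.2.18]{DS2}; the paper offers no proof of it, it is simply cited. Your proposal, as you say explicitly in your opening line, is instead a plan for Proposition~\ref{prop2.20} (UR iff WHSA), which is a different statement proved in Section~\ref{s7} of the paper. There is therefore no ``paper's own proof'' of Theorem~\ref{t2.7} against which to compare your argument, and your argument does not target Theorem~\ref{t2.7} in the first place.

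Reading your sketch charitably as a plan for Proposition~\ref{prop2.20}, the route is genuinely different from the paper's: the paper proves WHSA $\Rightarrow$ BAUP (using at most two hyperplanes per cube, see Claim~\ref{claim6.15}) and then invokes Theorem~\ref{t2.7} as a black box to conclude UR, whereas you propose to bypass BAUP and go directly to BPLG through a corona/stopping-time construction. The corona approach is natural and is indeed the engine behind the David--Semmes machinery, so the idea is sound in spirit. However, there is a concrete gap. The $\eps$-local WHSA at $Q$ only promises $\dist(Q,P(Q))\le K_0^{3/2}\ell(Q)$ (Definition~\ref{def2.13}, item~(2)); in particular the approximating hyperplane $P(Q)$ need not pass anywhere near $Q$ at scale $\ell(Q)$, and $E$ near $Q$ may lie at distance comparable to $K_0^{3/2}\ell(Q)$ from $P(Q)$. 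The approximation in WHSA is one-sided in the strong sense that $P(Q)\cap B^{**}_Q$ lies in an $\eps\ell(Q)$-neighborhood of $E$, but not vice versa. Consequently, within a coherent corona region there is no reason for the planes $P(Q)$ to lie close to the big pieces of $E$ that you want to parametrize, and the step ``a Whitney-cover plus smoothing yields a Lipschitz graph $\Gamma_F$ whose trace contains projections of the top cubes'' glosses over exactly this. This is why the paper's argument is forced to introduce an auxiliary cube $Q'$ and a second plane $P'=P(Q')$ (the Case~2 analysis in the proof of Claim~\ref{claim6.15}): when $10Q$ drops well below $P$, one needs a second plane, chosen at a smaller scale, to capture the piece of $E$ near $Q$. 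Any BPLG construction starting from WHSA data would have to reproduce this step, and it does not appear in your plan. The converse direction (UR $\Rightarrow$ WHSA) you sketch is the easier one and is broadly consistent with how one would verify it, though the paper never needs it and does not prove it.
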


We remark that the definition of BAUP in \cite{DS2} is slightly different in superficial
appearance, but it is not hard to verify that the dyadic version stated here is equivalent to
the condition in \cite{DS2}.  We note that we shall not need the full strength of this equivalence here,
but only the fact that our version of BAUP implies the version in \cite{DS2}, and hence implies UR.

We shall also require a new characterization of UR sets
of co-dimension 1, which is related to the BAUP and its variants.
For a sufficiently large constant $K_0$ to be chosen (see Lemma \ref{l4.1} below), we set
\begin{equation}\label{eq2.bstar}
B_Q^*:= B(x_Q,K_0^2\ell(Q))\,, \qquad \Delta^*_Q:= B_Q^*\cap E\,.
\end{equation}
Given a small positive number $\eps$, which we shall typically assume to be much smaller than $K_0^{-6}$,
we also set
\begin{equation}\label{eq2.bstarstar}
B_Q^{**}=B_Q^{**}(\eps) := B(x_Q,\eps^{-2}\ell(Q))\,,\quad
B_Q^{***}=B_Q^{***}(\eps) := B(x_Q,\eps^{-5}\ell(Q))\,. 
\end{equation}

\begin{definition} ({\bf ``$\eps$-local WHSA"})\label{def2.13} Given $\eps>0$,
we shall say that $Q\in\dd(E)$ satisfies the
$\eps$-{\it local WHSA} condition
(or more precisely, the ``$\eps$-local WHSA with parameter $K_0$") if there is a half-space
$H = H(Q)$, a hyperplane  $P=P(Q) =\partial H$, and a fixed positive number $K_0$
satisfying
\begin{enumerate}
\item $\dist(Z,E)\leq\eps\ell(Q),$ for every $Z\in P\cap B_Q^{**}(\eps)$.

\smallskip

\item $\dist(Q,P)\leq K_0^{3/2} \ell(Q).$

\smallskip

\item $H\cap B_Q^{**}(\eps)\cap E=\emptyset.$

\end{enumerate}
\end{definition}

Let us note that 
in particular, part (2) of the previous definition says that 
the hyperplane $P$ has an ``ample'' intersection with the ball  $B_Q^{**}(\eps)$. Indeed,
\begin{equation}\label{intersect-WHSA}
\dist(x_Q,P)\,\lesssim \,K_0^{\frac32}\,\ell(Q) 
\,\ll\eps^{-2}\ell(Q).
\end{equation}

\begin{definition} ({\bf ``WHSA"})\label{def2.14}
We shall say that an $n$-dimensional ADR set $E\subset \ree$
satisfies the {\it Weak  Half-Space Approximation} property
(``WHSA") if for some pair of positive constants $\eps_0$ and $K_0$, and for every positive
$\eps<\eps_0$, there is a constant $C_1=C_1(\eps)$ such that the set $\B$ of bad cubes in $\dd(E)$, for which the
$\eps$-local
WHSA condition with parameter $K_0$
fails, satisfies the packing condition
\begin{equation}\label{eq2.pack2}
\sum_{Q\subset Q_0,\, Q\in\B} \sigma(Q)\,\leq \,C_1 \,\sigma(Q_0)\,,\qquad \forall\, Q_0\in \dd(E)\,.
\end{equation}
\end{definition}

Next, we develop some further notation and terminology.
Let $\W$ be a fixed collection of closed Whitney cubes for an open set $\Omega$ with ADR boundary $E=\pom$,
and given $Q\in \dd(E)$, for the same constant $K_0$ as in \eqref{eq2.bstar}, we set

\begin{equation}\label{eq2.1}
\W_Q:= \left\{I\in \W:\,K_0^{-1} \ell(Q)\leq \ell(I)
\leq K_0\,\ell(Q),\, {\rm and}\, \dist(I,Q)\leq K_0\, \ell(Q)\right\}\,.
\end{equation}

We fix a small, positive parameter $\tau$, to be chosen momentarily, and given $I\in\W$,
we let
\begin{equation}\label{eq2.3*}I^* =I^*(\tau) := (1+\tau)I
\end{equation}
denote the corresponding ``fattened" Whitney cube.
We now choose $\tau$ sufficiently small that the cubes $I^*$ will retain the usual properties of Whitney cubes,
in particular that
$$\diam(I) \approx \diam(I^*) \approx \dist(I^*,E) \approx \dist(I,E)\,.$$

We then define Whitney regions 
with respect to $Q$ by setting
\begin{equation}\label{eq2.3}
U_Q:= \bigcup_{I\in \W_Q}I^*\,. 
\end{equation}
We observe that these Whitney regions may have more than one connected component,
but that the number of distinct components is uniformly bounded, depending only upon $K_0$ and dimension.
We enumerate the components of $U_Q$
as $\{U_Q^i\}_i$. 

Moreover, we enlarge the Whitney regions as follows.
\begin{definition}\label{def2.11a} For $\eps>0$,
and given $Q\in\dd(E)$,
we write $X\approx_{\eps,Q} Y$ if $X$ may be connected to $Y$ by a chain of
at most $\eps^{-1}$ balls of the form $B(Y_k,\delta(Y_k)/2)$, with
$\eps^3\ell(Q)\leq\delta(Y_k)\leq \eps^{-3}\ell(Q)$.
Given a sufficiently small parameter $\eps>0$, we then set
\begin{equation}\label{eq2.3a}
\tU^i_Q:= \left\{X \in\ree\setminus E:\, X\approx_{\eps,Q} Y\,,\, {\rm for\, some\,} Y\in U^{i}_Q\right\} \,.
\end{equation}
\end{definition}
\begin{remark}\label{r2.5}
Since $\tU^i_Q$ is
an enlarged version of $U_Q$, it may be that 
there are some $i\neq j$ for which
$\tU^i_Q$ meets $\tU^j_Q$.  This overlap will be harmless.
\end{remark}

\begin{lemma}[Bourgain \cite{B}]\label{Bourgainhm}  Suppose that
$\partial \Omega$ is $n$-dimensional ADR.  Then there are uniform constants $c\in(0,1)$
and $C\in (1,\infty)$, depending only on $n$ and ADR,
such that for every $x \in \partial\Omega$, and every $r\in (0,\diam(\partial\Omega))$,
if $Y \in \Omega \cap B(x,cr),$ then
\begin{equation}\label{eq2.Bourgain1}
\omega^{Y} (\Delta(x,r)) \geq 1/C>0 \;.
\end{equation}
\end{lemma}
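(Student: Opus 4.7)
The strategy is to construct a Newtonian potential associated to a small sub-ball of $\Delta(x,r)$, which is automatically harmonic in $\Omega$, and then to convert a pointwise lower bound on this potential at $Y$ into a lower bound on $\hm^Y(\Delta(x,r))$ via the maximum principle. Fix a large constant $A$ to be chosen, set $r_0:=r/A$, and define
\begin{equation*}
v(Z):=\int_{\Delta(x,r_0)}\left(\frac{r_0}{|Z-y|}\right)^{n-1}\frac{d\sigma(y)}{r_0^n}.
\end{equation*}
Since $n\geq 2$, so that $n+1\geq 3$, the kernel $Z\mapsto|Z-y|^{1-n}$ is harmonic on $\ree\setminus\{y\}$; hence $v$ is harmonic on $\ree\setminus\overline{\Delta(x,r_0)}$, in particular in $\Omega$, and $v(Z)\to 0$ as $|Z|\to\infty$.

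I would then record three pointwise estimates on $v$, each a direct consequence of $n$-dimensional ADR. First, the standard dyadic decomposition for Riesz potentials gives a uniform bound $v(\zeta)\leq C_1$ for every $\zeta\in\pom$. Second, for $\zeta\in\pom\setminus\Delta(x,Ar_0)$, the inequality $|\zeta-y|\geq\tfrac12|\zeta-x|$ (valid for $y\in\Delta(x,r_0)$ once $|\zeta-x|\geq 2r_0$) combined with $\sigma(\Delta(x,r_0))\lesssim r_0^n$ produces the decay bound
\begin{equation*}
v(\zeta)\leq C_2\left(\frac{r_0}{|\zeta-x|}\right)^{n-1}\leq C_2\,A^{1-n}.
\end{equation*}
Third, to obtain a lower bound $v(Y)\geq c>0$ for $Y\in\Omega\cap B(x,c_0 r_0)$, I would choose $\eta\in(0,1/2)$, depending only on the ADR constants, small enough that $\sigma(\Delta(x,r_0)\setminus\Delta(x,\eta r_0))\gtrsim r_0^n$ (which follows from combining the lower and upper ADR bounds at radii $r_0$ and $\eta r_0$), and then take $c_0\ll\eta$; every $y$ in that annulus satisfies $|Y-y|\approx r_0$, so the annular contribution already gives $v(Y)\geq c$.

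Finally, I would apply the maximum principle to the harmonic function $Z\mapsto v(Z)-C_1\,\hm^Z(\Delta(x,Ar_0))-C_2\,A^{1-n}$ in $\Omega$: the two upper bounds on $v|_{\pom}$ show that this function is nonpositive on $\pom$, while the decay $v(Z)\to 0$ at infinity permits a standard exhaustion of $\Omega$ by $\Omega\cap B(0,R)$, $R\to\infty$, to handle unbounded or disconnected components. Combined with the lower bound on $v(Y)$, this gives
\begin{equation*}
c\leq v(Y)\leq C_1\,\hm^Y(\Delta(x,Ar_0))+C_2\,A^{1-n}\qquad\text{for all }Y\in\Omega\cap B(x,c_0 r_0).
\end{equation*}
Choosing $A$ large enough that $C_2 A^{1-n}<c/2$, we obtain $\hm^Y(\Delta(x,Ar_0))\geq c/(2C_1)$. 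Writing $r=Ar_0$ and $c_1=c_0/A$ yields the stated inequality with constants depending only on $n$ and the ADR constants. I expect the only mildly delicate issue to be the justification of the maximum principle for unbounded and possibly disconnected $\Omega$, but this is entirely standard given the explicit decay of $v$ at infinity; no finer structure of $\pom$ beyond ADR is needed.
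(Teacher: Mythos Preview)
Your argument is correct; this is essentially Bourgain's original proof via the Riesz potential of surface measure, and the three pointwise estimates together with the maximum principle step go through as you describe (ADR implies the capacity density condition, hence Wiener regularity at every boundary point, which justifies the comparison; the decay of $v$ at infinity handles unbounded components). The paper itself does not give a proof of this lemma but simply refers the reader to \cite[Lemma~1]{B}, so there is nothing further to compare.
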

We refer the reader to \cite[Lemma 1]{B} for the proof.  We note for future reference that
in particular,  if $\hat{x}\in \pom$
satisfies
$|X-\hat{x}|=\delta(X)$, and
$\Delta_X:= \pom\cap B\big(\hat{x}, 10\delta(X)\big)$,
then for a slightly different uniform constant $C>0$,
\begin{equation}\label{eq2.Bourgain2}
\omega^{X} (\Delta_X) \geq 1/C \;.
\end{equation}
Indeed, the latter bound follows immediately from \eqref{eq2.Bourgain1},
and the fact that we can form a Harnack Chain connecting
$X$ to a point $Y$ that lies on the line segment from $X$ to $\hat{x}$, and satisfies $|Y-\hat{x}|= c\delta(X)$.

As a consequence of Lemma \ref{Bourgainhm}, we have the following:

\begin{corollary}[\cite{HMT}]\label{cor2.12} Let
$\partial \Omega$ be $n$-dimensional ADR.   
Suppose that
$u\geq 0$ is harmonic in $\Omega\cap B(x,2r)$, and vanishes continuously on the surface ball
$\Delta(x,2r) = B(x,2r)\cap\pom$, with $x\in \pom$, and $0<r<\diam \pom$.  Then for some $\alpha >0$,
\begin{equation}\label{eq2.13}
u(Y) \leq C \left(\frac{\delta(Y)}{r}\right)^\alpha \frac1{|B(x,2r)|}\,\iint_{B(x,2r)\cap\Omega} u\,,\qquad \forall\, Y\in B(x,r)\cap\Omega\,,
\end{equation}
where the constants $C$ and $\alpha$ depend
only on dimension and the ADR constants for $\pom$.
\end{corollary}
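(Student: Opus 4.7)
The plan is to combine a subharmonic $L^1$ average bound, obtained from a zero extension of $u$, with a boundary H\"older decay obtained by iterating Bourgain's estimate. Starting with the first ingredient, I would extend $u$ to a function $\tilde u$ on $B(x,2r)$ by setting $\tilde u:=0$ on $B(x,2r)\setminus\Omega$. Continuous vanishing makes $\tilde u$ continuous on $B(x,2r)$, and at each boundary point $p\in B(x,2r)\cap\pom$ the bound $\tilde u(p)=0\leq |B(p,s)|^{-1}\int_{B(p,s)}\tilde u$ holds trivially since $\tilde u\geq 0$; thus $\tilde u$ is subharmonic on $B(x,2r)$. The solid sub-mean-value inequality applied on $B(Z,r/2)\subset B(x,2r)$ for $Z\in B(x,3r/2)$ would then yield
\[
\sup_{B(x,3r/2)\cap\Omega}u\,\lesssim\,\frac{1}{|B(x,2r)|}\int_{B(x,2r)\cap\Omega}u(Z)\,dZ.
\]

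Next I would prove a H\"older decay of the form $u(Y)\leq C(\delta(Y)/r)^\alpha\sup_{B(x,3r/2)\cap\Omega}u$ for $Y\in B(x,r)\cap\Omega$. The case $\delta(Y)\geq r/10$ is immediate from the interior mean-value property of $u$ on $B(Y,\delta(Y)/2)$, so I focus on $\delta(Y)<r/10$. Letting $\hat Y\in\pom$ with $|\hat Y-Y|=\delta(Y)$, one has $|\hat Y-x|\leq r+\delta(Y)<11r/10$. Fixing a large constant $\lambda>1$ (determined below) and setting $\rho_k:=\lambda^{-k}(r/10)$ and $M_k:=\sup_{B(\hat Y,\rho_k)\cap\Omega}u$, all the balls $B(\hat Y,\rho_k)$ lie in $B(x,3r/2)$. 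The plan is to prove a one-step decay $M_k\leq\theta M_{k-1}$ with $\theta\in(0,1)$ independent of $k$, and then iterate until $\rho_{k_0}\approx\delta(Y)$ to obtain H\"older exponent $\alpha=\log_\lambda(1/\theta)$.

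For the one-step decay I would work in the bounded subdomain $D:=\Omega\cap B(\hat Y,\rho_{k-1})$. Since $u=0$ on $\pom\cap B(\hat Y,\rho_{k-1})$ and $u\leq M_{k-1}$ on $\partial B(\hat Y,\rho_{k-1})\cap\overline\Omega$, the maximum principle would give
\[
u(Y')\,\leq\,M_{k-1}\bigl(1-\omega_D^{Y'}(\Delta(\hat Y,\rho_{k-1}))\bigr),\qquad Y'\in D.
\]
A Bourgain-type lower bound $\omega_D^{Y'}(\Delta(\hat Y,\rho_{k-1}))\geq 1/C$ for $Y'\in B(\hat Y,c\rho_{k-1})\cap\Omega$, with $c$ as in Lemma \ref{Bourgainhm}, would then yield $M_k\leq(1-1/C)M_{k-1}$ provided $\lambda:=1/c$ is chosen so that $B(\hat Y,\rho_k)\subset B(\hat Y,c\rho_{k-1})$.

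The hard part will be justifying the Bourgain-type estimate in the subdomain $D$, since Lemma \ref{Bourgainhm} is stated only for $\Omega$ itself. I expect this transfers because the proof of Lemma \ref{Bourgainhm} is local in nature: it rests on the Riesz-potential barrier $\phi(Y')=\int_{\Delta(\hat Y,\rho_{k-1})}|Y'-w|^{-(n-1)}\,d\sigma(w)$, whose upper and lower pointwise bounds on $B(\hat Y,c\rho_{k-1})\cap\Omega$ depend only on the ADR property of $\Delta(\hat Y,\rho_{k-1})\subset\pom$ and on the harmonicity of $\phi$ off $\pom$; the spherical portion of $\partial D$ causes no obstruction since $\phi\leq C\rho_{k-1}$ also on $\partial B(\hat Y,\rho_{k-1})\cap\overline\Omega$. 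Once this is established, combining the H\"older decay with the $L^1$ bound yields the corollary.
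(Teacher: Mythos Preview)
The paper does not supply a proof of this corollary; it is simply attributed to \cite{HMT} and placed immediately after Bourgain's lemma (Lemma~\ref{Bourgainhm}) to indicate that it is a consequence thereof. Your argument is the standard one and is correct: the zero extension of $u$ is subharmonic on $B(x,2r)$ and yields the $L^1$-average control of $\sup_{B(x,3r/2)\cap\Omega}u$, while the boundary H\"older decay follows by iterating the one--step oscillation estimate coming from Bourgain's bound.

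Two small comments. First, in the case $\delta(Y)\geq r/10$ you do not need the mean--value property at all: since then $(\delta(Y)/r)^\alpha\gtrsim 1$, the desired inequality reduces to $u(Y)\leq\sup_{B(x,3r/2)\cap\Omega}u$, which is trivial. Second, your concern about applying Bourgain's lemma in the subdomain $D=\Omega\cap B(\hat Y,\rho_{k-1})$ rather than in $\Omega$ is well placed, and your sketch of the resolution is right. Concretely: take $\mu=\sigma|_{E\cap B(\hat Y,\rho_{k-1}/M)}$ for a large dimensional constant $M$, and set $\phi(Y')=\int|Y'-w|^{1-n}\,d\mu(w)$. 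Upper ADR gives $\phi\lesssim \rho_{k-1}/M$ globally, lower ADR gives $\phi\gtrsim \rho_{k-1}/M$ on $B(\hat Y,\rho_{k-1}/(2M))$, and on the sphere $\partial B(\hat Y,\rho_{k-1})$ one has the improved bound $\phi\lesssim \rho_{k-1}/M^{n}$. Hence $v:=\phi/(C\rho_{k-1}/M)$ is harmonic in $D$, satisfies $v\leq 1$ on $\Delta(\hat Y,\rho_{k-1})$, $v\lesssim M^{1-n}$ on the spherical part of $\partial D$, and $v\gtrsim 1$ on $B(\hat Y,\rho_{k-1}/(2M))\cap\Omega$. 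The maximum principle then gives $\omega_D^{Y'}(\Delta(\hat Y,\rho_{k-1}))\geq v(Y')-CM^{1-n}\gtrsim 1$ for $M$ fixed large enough, which is exactly the input you need for the iteration.
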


\begin{lemma}[\cite{HMT}] \label{lemma2.green}
Let $\Omega$ be an open set with $n$-dimensional ADR boundary. There are positive, finite constants $C$, depending only on dimension, $\Lambda$
and $c_\theta$, depending on dimension, $\Lambda$, and $\theta \in (0,1),$
such that the Green function satisfies
\begin{eqnarray}\label{eq2.green}
&G(X,Y) \leq C\,|X-Y|^{1-n}\,;\\[4pt]\label{eq2.green2}
& c_\theta\,|X-Y|^{1-n}\leq G(X,Y)\,,\quad {\rm if } \,\,\,|X-Y|\leq \theta\, \delta(X)\,, \,\, \theta \in (0,1)\,;
\end{eqnarray}
\begin{equation}
\label{eq2.green-cont}
G(X,\cdot)\in C(\overline{\Omega}\setminus\{X\}) \qquad \mbox{and}\qquad G(X,\cdot)\big|_{\pom}\equiv 0\,,\qquad \forall X\in\Omega;
\end{equation}
\begin{equation}
\label{eq2.green3}
G(X,Y)\geq 0\,,\qquad \forall X,Y\in\Omega\,,\, X\neq Y;
\end{equation}
\begin{equation}\label{eq2.green4}
G(X,Y)=G(Y,X)\,,\qquad \forall X,Y\in\Omega\,,\, X\neq Y;
\end{equation}
and for every $\Phi \in C_0^\infty(\ree)$,
\begin{equation}\label{eq2.14}
\int_{\partial\Omega} \Phi\,d\omega^X -\Phi(X)
=
-\iint_\Omega
\nabla_Y G(Y,X) \cdot\nabla\Phi(Y)\, dY, \quad
\mbox{for a.e. }X\in\Omega.
\end{equation}
\end{lemma}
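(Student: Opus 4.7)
The plan is to construct $G$ via an exhaustion and then read off the listed properties by combining the classical facts with the boundary decay already recorded in Corollary~\ref{cor2.12}.

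First I would fix $X\in\Omega$ and build an increasing sequence of bounded smooth domains $\Omega_j\Subset\Omega_{j+1}\subset\Omega$ (with $X\in\Omega_1$) exhausting $\Omega$; let $G_j(X,\cdot)$ be the classical Green function of $-\Delta$ on $\Omega_j$. Three things are immediate for $G_j$: by the maximum principle applied to the nonnegative harmonic function $Y\mapsto c_n|X-Y|^{1-n}-G_j(X,Y)$ one gets the upper bound $G_j(X,Y)\le c_n|X-Y|^{1-n}$; by the maximum principle $G_j\ge 0$; and $G_j$ is symmetric and monotone nondecreasing in $j$. Defining $G(X,Y):=\lim_j G_j(X,Y)$ therefore yields \eqref{eq2.green}, \eqref{eq2.green3} and \eqref{eq2.green4} at once.

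For the lower bound \eqref{eq2.green2}, suppose $|X-Y|\le\theta\,\delta(X)$ with $\theta\in(0,1)$; choose $j$ large enough that $B(X,\delta(X))\subset\Omega_j$, and apply the maximum principle on $\Omega_j\setminus\overline{B(X,\delta(X))}$ to the harmonic function $Y\mapsto c_n|X-Y|^{1-n}-G_j(X,Y)$, which on $\partial B(X,\delta(X))$ is at most $c_n\,\delta(X)^{1-n}$. This yields $G_j(X,Y)\ge c_n|X-Y|^{1-n}-c_n\,\delta(X)^{1-n}\ge c_\theta|X-Y|^{1-n}$, and the bound passes to the limit. For the boundary vanishing and continuity \eqref{eq2.green-cont}, note that away from $X$ the function $G(X,\cdot)$ is nonnegative and harmonic in $\Omega$ (by monotone convergence of harmonic functions with locally bounded $L^1$ means, using the upper bound). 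For any boundary point $z\in\pom\setminus\{X\}$, choose $r$ so small that $X\notin B(z,4r)$; then $G(X,\cdot)$ is harmonic in $\Omega\cap B(z,2r)$, bounded by $C r^{1-n}$ there, and vanishes continuously on $\Delta(z,2r)$ at the level of each $G_j$. Corollary~\ref{cor2.12} then gives $G(X,Y)\le C(\delta(Y)/r)^{\alpha}r^{1-n}\to 0$ as $Y\to z$, so $G(X,\cdot)$ extends continuously by $0$ to $\pom$.

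Finally, the representation formula \eqref{eq2.14} is essentially the definition of harmonic measure via Riesz representation. Given $\Phi\in C_0^\infty(\ree)$, solve the Dirichlet problem on $\Omega_j$ with data $\Phi|_{\partial\Omega_j}$; writing the solution as $u_j(X)=\int \Phi\,d\omega_j^X$ and using Green's second identity on $\Omega_j$ to express $\Phi(X)-u_j(X)$ as $\iint_{\Omega_j}\nabla_Y G_j(Y,X)\cdot\nabla\Phi(Y)\,dY$, one then passes to the limit: the harmonic measures $\omega_j^X$ converge weakly to $\omega^X$, the gradients of $G_j$ are controlled by the upper bound on $G$ and interior Caccioppoli, and $\nabla\Phi$ has compact support, so the right‑hand side converges to the claimed integral.

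The main obstacle is the absence of any connectivity or Harnack chain assumption and the merely ADR nature of $\pom$, which prevents a direct appeal to NTA theory. The essential input replacing these is Bourgain's estimate (Lemma~\ref{Bourgainhm}) and its consequence Corollary~\ref{cor2.12}: precisely this quantitative boundary Hölder decay is what makes the vanishing of $G$ at $\pom$ and the identification with harmonic measure robust in this rough setting.
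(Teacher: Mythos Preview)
The paper does not prove this lemma; it is quoted from \cite{HMT} with no argument given, so there is no ``paper's proof'' to compare against. Your exhaustion strategy is the standard one and the estimates \eqref{eq2.green}, \eqref{eq2.green2}, \eqref{eq2.green3}, \eqref{eq2.green4} follow exactly as you say.

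There is, however, a genuine gap in your argument for \eqref{eq2.green-cont}. You write that $G(X,\cdot)$ ``vanishes continuously on $\Delta(z,2r)$ at the level of each $G_j$'' and then invoke Corollary~\ref{cor2.12}. But Corollary~\ref{cor2.12} has as a \emph{hypothesis} that the function vanishes continuously on the surface ball; you cannot use it to prove that very conclusion for $G$. Applying it instead to the approximants $G_j$ does not help either: once extended by zero outside $\Omega_j$, each $G_j$ fails to be harmonic in $\Omega\cap B(z,2r)$ (there is a jump across $\partial\Omega_j$), so the corollary does not apply to $G_j$ in that region. As written, the step is circular.

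The missing ingredient is Wiener regularity of every point of $\pom$, which is a direct consequence of Bourgain's Lemma~\ref{Bourgainhm} (the capacity density condition implied by ADR). Once you know that $\pom$ is Wiener regular, the Perron solution with continuous boundary data $E(X-\cdot)|_{\pom}$ is continuous up to $\pom$, and $G(X,\cdot)=E(X-\cdot)-h_X(\cdot)$ then vanishes continuously on $\pom$; equivalently, in your exhaustion picture, Wiener regularity gives a barrier at each $z\in\pom$ that forces the monotone limit $G(X,Y)\to 0$ as $Y\to z$. After that, Corollary~\ref{cor2.12} is legitimately applicable and yields the quantitative H\"older decay you want (and which is used later in the paper). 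Your instinct that Bourgain's estimate is the key input is correct; you just need to use it one step earlier, to obtain regularity of $\pom$, rather than through Corollary~\ref{cor2.12}.
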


Next we present a version of one of the estimates obtained 
by Caffarelli-Fabes-Mortola-Salsa in \cite{CFMS}, which remains true even in the absence of connectivity:

\begin{lemma}[CFMS]\label{l2.10}
Suppose that $\partial \Omega$ is $n$-dimensional ADR.  
For every
$Y\in \Omega$ and $X\in\Omega$ such that $|X-Y|\ge \delta(Y)/2$ we have
\begin{equation}\label{eqn:right-CFMS}
\frac{G(Y,X)}{\delta(Y)}
\le
C\,\frac{\hm^X( \Delta_Y)}{\sigma( \,\Delta_Y)},
\end{equation}
where $\Delta_Y=B(\hat{y},10\delta(Y))$ with $\hat{y}\in\pom$ such that $|Y-\hat{y}|=\delta(Y)$.
\end{lemma}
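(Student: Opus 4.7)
My plan is to run a two-step maximum principle argument in a punctured subdomain, emulating the classical CFMS comparison while bypassing any Harnack-chain step (which is unavailable without interior connectivity). By the symmetry \eqref{eq2.green4}, $G(Y,X)=G(X,Y)$, so I work with $\phi(Z):=G(Z,Y)$. Set $r:=\delta(Y)$ and introduce the subdomain
$D:=\Omega\setminus\overline{B(Y,r/4)}$;
the hypothesis $|X-Y|\ge r/2>r/4$ places $X\in D$.

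The first step is to bound $\phi(X)$ by a multiple of the harmonic measure of the inner sphere for $D$. The function $\phi$ is harmonic in $D$, vanishes on $\pom$ by \eqref{eq2.green-cont}, decays at infinity (since $n+1\ge 3$), and by \eqref{eq2.green} satisfies $\phi(Z)\le C|Z-Y|^{1-n}=C(r/4)^{1-n}$ on $\partial B(Y,r/4)$. Exhausting $D$ by bounded pieces and passing to the limit, the resulting Poisson representation in $D$ gives
\[
\phi(X)\;\le\;C\,r^{1-n}\,\omega_D^X\bigl(\partial B(Y,r/4)\bigr).
\]

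The second step compares $v(Z):=\omega_D^Z(\partial B(Y,r/4))$ and $w(Z):=\omega^Z(\Delta_Y)$, both nonnegative and harmonic on $D$, by verifying $v\le C_B w$ on $\partial D$ and propagating this inside via the maximum principle. On $\pom$, $v\equiv 0$, so the inequality is trivial. On $\partial B(Y,r/4)$ we have $v\equiv 1$, and each $Z$ there satisfies $|Z-\hat y|\le r+r/4=5r/4$; Lemma \ref{Bourgainhm} applied to $\Delta_Y=\Delta(\hat y,10r)$ then yields $w(Z)\ge 1/C_B$, so $v(Z)=1\le C_B\,w(Z)$. (If Bourgain's geometric constant $c$ is too small to cover the ratio $5r/4\le 10cr$, I shrink the inner radius from $r/4$ to some $\eta r$ with $\eta\le 10c-1$; the resulting $\eta^{1-n}$ factor in Step~1 is harmlessly absorbed into $C$.) Combining the two steps gives $G(Y,X)\le C\,r^{1-n}\,\omega^X(\Delta_Y)$, and dividing by $\delta(Y)=r$ while invoking the ADR bound $\sigma(\Delta_Y)\approx r^n$ from \eqref{eq1.ADR} yields the asserted estimate.

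The principal obstacle is the Bourgain lower bound for $w$ on the inner sphere: the geometric constant $c$ in Lemma \ref{Bourgainhm} must cooperate with the fixed factor $10$ appearing in the definition of $\Delta_Y$, which is why the inner radius of the punctured subdomain is chosen as a small multiple of $r$ depending only on $c$, and hence only on $n$ and the ADR constants. The unboundedness of $D$ is handled routinely via exhaustion by $D\cap B(0,R)$ together with the decay at infinity of both Green's function and the bounded harmonic measures in question, which suffices for the maximum principle on $\phi-C r^{1-n}v$ and on $v-C_B w$.
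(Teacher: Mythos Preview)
Your approach is essentially the paper's: compare $G(Y,\cdot)$ with $\hm^{\,\cdot}(\Delta_Y)$ via the maximum principle in the punctured region $\Omega\setminus \overline{B(Y,\eta\,\delta(Y))}$. The paper does this in a single comparison (setting $u(X)=G(Y,X)/\delta(Y)$ and $v(X)=\hm^X(\Delta_Y)/\sigma(\Delta_Y)$, with inner radius $\delta(Y)/2$), while you split it into two steps through the auxiliary harmonic measure $\omega_D^X(\partial B(Y,\eta r))$; the two arguments are equivalent.

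One point to flag: your worry about ``bypassing any Harnack-chain step'' is misplaced here, and your proposed fix (shrinking $\eta$ so that $\eta\le 10c-1$) may fail if the Bourgain constant $c$ happens to satisfy $c\le 1/10$. The paper handles the lower bound for $\hm^Z(\Delta_Y)$ on the inner sphere exactly by a Harnack chain---but one that lives entirely inside the Euclidean ball $B(Y,\delta(Y))\subset\Omega$, where connectivity is automatic. Concretely, the paper picks $X_0\in\partial B(Y,\delta(Y)/2)$ on the segment from $Y$ to $\hat y$, applies \eqref{eq2.Bourgain2} to get $\hm^{X_0}(\Delta_Y)\gtrsim 1$, and then propagates this by Harnack's inequality within $B(Y,\delta(Y))$ to all of $\partial B(Y,\delta(Y)/2)$. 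No global Harnack-chain hypothesis on $\Omega$ is needed; this local Harnack step is both available and the cleanest way to close your Step~2.
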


For future use, we note that as a consequence of \eqref{eqn:right-CFMS}, it follows directly
that for 
every $Q\in\dd(\pom)$, 
if $Y\in C_0 B_Q$ with $\delta(Y)\gtrsim C_0'\ell(Q)$, 
then there exists $C=C(n,ADR,C_0,C_0')$ such that
\begin{equation}\label{eqn:right-CFMS:cubes}
\frac{G(Y,X)}{\ell(Q)}
\lesssim \,
\,\frac{\hm^X(C Q)}{\sigma(C Q)} \,
\lesssim \, \fint_Q \M(k^X1_{CQ}) \, d\sigma,
\qquad \forall\,X\notin C\,B_Q\,,
\end{equation}
where $\M$ is the usual Hardy-Littlewood maximal operator on 
$\pom$, and $k^X$ is the Poisson kernel for $\om$ with pole at $X$.


\begin{proof}[Proof of Lemma \ref{l2.10}]
We follow the 
well-known argument of  \cite{CFMS} (see also \cite[Lemma 1.3.3]{Ke}). Fix $Y\in \Omega$ and 
write $B^Y=\overline{B(Y,\delta(Y)/2)}$. Consider the open set $\widehat{\Omega}=\Omega\setminus B^Y$ 
for which clearly $\partial\widehat{\Omega}=\pom\cup \partial B^Y$.   Set 
$$u(X):=G(Y,X)/\delta(Y)\,,\qquad v(X):=\hm^X(\Delta_Y)/\sigma( \Delta_Y)\,,$$
for every $X\in \widehat{\Omega}$. Note 
that both $u$ and $v$ are non-negative harmonic functions in 
$\widehat{\Omega}$. If $X\in \pom$ then $u(X)=0\le v(X)$. Take now $X\in\partial B^Y$ 
so that $u(X)\lesssim \delta(Y)^{-n}$ by \eqref{eq2.green}. On the other 
hand, if we fix $X_0\in\partial B^Y$ with $X_0$ on the line
segment that joints $Y$ and $\hat{y}$, then $2\Delta_{X_0} = \Delta_Y$, so that
$v(X_0) \gtrsim \delta(Y)^{-n}$, by  \eqref{eq2.Bourgain2}.  By Harnack's inequality,
we then obtain $v(X) \gtrsim \delta(Y)^{-n}$, for all $X\in \partial B^Y$.
Thus, $u\lesssim v$ in $\partial\widehat{\Omega}$ and by the
maximum principle this immediately extends to $\widehat{\Omega}$ as desired.
\end{proof}

\begin{lemma}\label{lemma:G-aver}
Suppose that $\pom$ is $n$-dimensional ADR. Let $B=B(x,r)$ with $x\in\pom$ and $0<r<\diam(\pom)$, and set $\Delta=B\cap \pom$. There exists $\kappa_0>2$ depending only in $n$ and the ADR constant of $\pom$ such that
for $X\in\Omega\setminus \kappa_0\,B$  we have
\begin{equation}\label{eqn:aver-B}
\sup_{\frac12B} G(\cdot,X)\,\lesssim\,\frac1{|B|}\iint_B G(Y,X)\,dY
\,\le \,
C\,r\,\frac{\hm^X(C\Delta)}{\sigma(C\,\Delta)}.
\end{equation}
where $C$ depends in $n$ and the ADR constant of $\pom$.
\end{lemma}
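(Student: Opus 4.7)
The plan is to treat the two inequalities separately, after first picking $\kappa_0$ large enough (depending only on $n$ and ADR) that $X\notin\kappa_0 B$ forces $|X-Y|\ge\delta(Y)/2$ both for every $Y\in B$ and for the center $Y_I$ of every Whitney cube $I$ of $\Omega$ meeting $B$. Since $\delta\lesssim r$ in both cases (using that $B$ is centered on $\pom$), this holds for $\kappa_0$ a purely dimensional constant, and it is exactly the hypothesis needed to apply Lemma \ref{l2.10}.

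For the first inequality I would extend $G(\cdot,X)$ by zero outside $\Omega$. By \eqref{eq2.green3}--\eqref{eq2.green-cont} the extension $\tilde G$ is nonnegative and continuous away from $X$, harmonic in $\Omega\setminus\{X\}$, and identically zero on the interior of $\ree\setminus\Omega$; a standard sub-mean-value check at boundary points of $\Omega$ then shows that $\tilde G$ is subharmonic on $\ree\setminus\{X\}$. Since $X\notin B$ and $B(Y,r/2)\subset B$ for every $Y\in\tfrac12 B$, applying the sub-mean value inequality to $\tilde G$ on $B(Y,r/2)$ gives
$$G(Y,X)=\tilde G(Y)\,\le\,\frac{1}{|B(Y,r/2)|}\iint_{B(Y,r/2)}\tilde G\,\le\,\frac{C}{|B|}\iint_{B} G(Z,X)\,dZ,$$
and taking the supremum over $Y\in\tfrac12 B$ produces the first inequality.

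For the second inequality I would combine Lemma \ref{l2.10} with a Whitney decomposition of $\Omega$. Any Whitney cube $I$ meeting $B$ has $\ell(I)\lesssim r$, and for the center $Y_I$ of such a cube $\delta(Y_I)\approx\ell(I)$. Harnack's inequality within $I$ (valid since $X$ is far from $I$) gives $G(Y,X)\approx G(Y_I,X)$ on $I$, and Lemma \ref{l2.10} together with ADR ($\sigma(\Delta_{Y_I})\approx\ell(I)^n$) yields
$$\iint_{I} G(Y,X)\,dY\,\lesssim\,|I|\,G(Y_I,X)\,\lesssim\,\ell(I)^{2}\,\hm^X(\Delta_{Y_I}).$$
Grouping the cubes meeting $B$ by scale, with $\W_k:=\{I\in\W: \ell(I)\approx 2^{-k}r\}$ for $k\ge 0$, the surface balls $\Delta_{Y_I}=B(\hat y_I,10\delta(Y_I))$ with $I\in\W_k$ all lie inside $C\Delta$ for some dimensional $C$, and have uniformly bounded overlap on $\pom$: any $z$ lying in $\Delta_{Y_I}$ is within $\lesssim 2^{-k}r$ of $Y_I$, and only $O(1)$ Whitney cubes at scale $2^{-k}r$ can have centers that close to $z$. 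Hence $\sum_{I\in\W_k}\hm^X(\Delta_{Y_I})\lesssim\hm^X(C\Delta)$, so
$$\sum_{I\in\W_k}\iint_{I} G(Y,X)\,dY\,\lesssim\,(2^{-k}r)^{2}\,\hm^X(C\Delta),$$
and summing the geometric series in $k\ge 0$ produces $\iint_{B} G(\cdot,X)\lesssim r^{2}\,\hm^X(C\Delta)$. Dividing by $|B|\approx r^{n+1}$ and using $\sigma(C\Delta)\approx r^n$ yields the second inequality. The only delicate step is the bounded-overlap claim for $\{\Delta_{Y_I}\}_{I\in\W_k}$, but this is routine from the Whitney construction plus ADR, so I do not expect a genuine obstacle in any step.
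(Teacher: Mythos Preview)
Your proof is correct and follows essentially the same approach as the paper: extend $G(\cdot,X)$ by zero to obtain subharmonicity for the first inequality, then decompose $B$ into Whitney cubes, apply the CFMS estimate at the center of each, and sum over dyadic scales using bounded overlap. The only cosmetic difference is that the paper associates to each Whitney cube $I$ a dyadic cube $Q_I\in\dd(\pom)$ (and uses \eqref{eqn:right-CFMS:cubes}), whereas you work directly with the surface balls $\Delta_{Y_I}$ from Lemma~\ref{l2.10}; the bounded-overlap arguments are equivalent.
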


\begin{proof} Extending $G(\cdot,X)$ to be 0 outside of $\om$, we obtain a sub-harmonic function in
$B$.  The first inequality in \eqref{eqn:aver-B} follows immediately.  To prove the second inequality,
we set $\Sigma_B=\{I\in \W: I\cap B\neq\emptyset\}$ and note that if $I\in\Sigma_B$ then 
$$\ell(I)\approx \dist(I,\pom)\le \dist(I,x)\le r\,.$$ 
In particular we can find $\kappa_0$ depending only in the implicit constants in the 
previous estimate so that $d(X,4\,I)\ge 4\,r$ for every $I\in \Sigma_B$. Let $Q_I\in\dd$ be so 
that $\ell(Q_I)=\ell(I)$ and $\dist(I,\pom)=\dist(I,Q_I)$. Note that then $\ell(Q_I)\lesssim r$ and 
$Y(I)$, the center of $I$, satisfies 
$Y(I)\in CB_{Q_I}$ and $\delta(Y(I))\approx \ell(I)\approx \ell(Q_I)$. Hence we 
can invoke \eqref{eqn:right-CFMS:cubes} (taking $\kappa_0$ larger if needed) and obtain that
for every $Y\in I$,
$$
G(Y,X)\approx G(Y(I),X)
\lesssim \ell(I)\,\frac{\hm^X(C Q_I)}{\sigma(C Q_I)},
$$
where the first estimate uses Harnack's inequality in $2I\subset \Omega$.
Hence,
\begin{multline*}
\iint_B G(Y,X)\,dY
\le
\sum_{I\in\Sigma_B} \iint_I G(Y,X)\,dY
\lesssim
\sum_{I\in\Sigma_B} \ell(I)^2\,\hm^X(C Q_I)
\\
\le
\sum_{k:2^{-k}\lesssim r}2^{-2\,k}\sum_{I\in\Sigma_B: \ell(I)=2^{-k}} \hm^X(C Q_I)
\lesssim
r^2\,\hm^X(C' \Delta).
\end{multline*}
where in the last inequality we have used that the cubes $Q_I$ have uniformly bounded overlap whenever $\ell(I)=2^{-k}$ and they are all contained in $C'\Delta$. This and the ADR property readily yields the desired estimate.

\end{proof}

\section{Proof of Theorem \ref{t1}, step 1: preliminary arguments}\label{s4}


Let us introduce some notation.  Fix $Q_0\in\dd(\pom)$. Recall \eqref{cube-ball2} and take $B_{Q_0}=B(x_{Q_0}, r_{Q_0})$ with $r_{Q_0}\approx\ell(Q_0)$ so that  $\Delta_{Q_0}=B_{Q_0} \cap \pom\subset Q_0$. Let $X_0$ be the associated ``corkscrew'' point given in the statement of Theorem \ref{t1}. In particular, for $C_1$ large enough (so that $2\,Q_0\subset C_1\Delta_{Q_0}$) we have that $X_0\in C_0 B_{Q_0}$ with $\delta(X_0)\ge C_0^{-1}\,r_{Q_0}\approx \ell(Q_0)$ for which $\hm^{X_0}\ll \sigma$ in $2Q_{0}$ and moreover
\begin{equation}\label{eqn:Bour+Ainfty}
\omega^{X_0}(Q_0)\ge C_0^{-1},
\qquad\qquad
\int_{2Q_0} k^{X_0}(y)^p\,d\sigma
\le
C_0\,\sigma(2\,Q_0)^{1-p}
\end{equation}
Set $\hm := C_0\, \sigma(Q_0)\, \hm^{X_{0}}$, and let $k:=  d\hm/d\sigma$ be the corresponding
normalized Poisson kernel.    We then have
\begin{equation}\label{eq4.6}
1\, \leq\,\frac{\hm(Q_0)}{\sigma(Q_0)}\,\leq \,\frac{\hm(\pom)}{\sigma(Q_0)}\le C_0
\end{equation}
and
\begin{equation}\label{eq4.6-bis}
\left(\fint_{2\,Q_0} k(y)^p\,d\sigma(y)\right)^{\frac1p}
\le
C_0^{1+\frac1p}.
\end{equation}
Given a family $\F=\{Q_j\}$ of disjoint sub-cubes of $Q_0$, we set 
$$\dd_{\F,Q_0}:=\{ Q\subset Q_0: \, Q {\rm \,\, is\, not\, contained\, in\, any}\,\, Q_j\in\F\}\,,$$
and for any $Q\in \dd(\pom)$, we set
$$\dd_Q:= \{Q'\subset Q\}\,.$$

As above, let $\M$ denote the usual Hardy-Littlewood maximal operator on $\pom$.  

\begin{lemma}\label{l4.4}
Under the assumptions of Theorem \ref{t1} and following the previous notation, there is a pairwise disjoint family $\F=\{Q_j\}_j\setminus\{Q_0\}$, such that
\begin{equation}\label{eq4.8}
\sigma\left(Q_0\setminus \left(\cup_j Q_j\right)\right) \geq c_0\,\sigma(Q_0)\,
\end{equation}
where  $0<c_0\le 1$ depends only on the implicit constants in the hypotheses of Theorem \ref{t1}, and
\begin{equation}\label{eq4.9}
\frac12\, \leq \,\frac{\hm(Q)}{\sigma(Q)}\,\leq \fint_Q \M\big(k1_{2_{Q_0}}\big)\,d\sigma\,
\le\, C\,,\qquad \forall\, Q\in\dd_{\F,Q_0}\,,
\end{equation}
where $C>1$ depends only on the implicit constants in the hypotheses.
\end{lemma}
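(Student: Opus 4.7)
The plan is to run a two-sided Calder\'on-Zygmund-style stopping time on $Q_0$ with respect to two independent conditions. Fix a large constant $K>1$ (to be pinned down at the end), and let $\F$ denote the collection of maximal proper subcubes $Q\subsetneq Q_0$ for which at least one of the following holds:
\begin{itemize}
\item[\textbf{(I)}] $\hm(Q)/\sigma(Q) < 1/2$, or
\item[\textbf{(II)}] $\fint_Q \M(k1_{2Q_0})\,d\sigma > K$.
\end{itemize}
Maximality makes $\F$ pairwise disjoint. Observe that \eqref{eq4.6} already rules out (I) at $Q=Q_0$, while an application of the strong $(p,p)$ bound for $\M$ combined with \eqref{eq4.6-bis} and ADR shows that, provided $K$ is large enough depending only on the hypothesized constants, (II) also fails at $Q_0$; in particular $Q_0\in \dd_{\F,Q_0}$.

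For any $Q\in\dd_{\F,Q_0}$, neither stopping condition can hold at $Q$ itself (otherwise $Q$ would be contained in some element of $\F$, contradicting the definition of $\dd_{\F,Q_0}$), yielding directly the outer inequalities in \eqref{eq4.9}. The middle inequality is essentially free: since $Q\subset 2Q_0$, Lebesgue differentiation gives $\M(k1_{2Q_0})(y)\ge k(y)$ for $\sigma$-a.e.\ $y\in Q$, and integrating on $Q$ produces $\fint_Q \M(k1_{2Q_0})\,d\sigma \ge \fint_Q k\,d\sigma = \hm(Q)/\sigma(Q)$.

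The substantive work is the measure bound \eqref{eq4.8}. Split $\F=\F_1\cup\F_2$, where $\F_2$ collects cubes for which (II) holds and $\F_1$ the remainder (which must then satisfy (I)). For $\F_2$, combining the stopping condition with H\"older's inequality on $Q_j$ gives $K^p\sigma(Q_j)\le \int_{Q_j}\M(k1_{2Q_0})^p\,d\sigma$; summing over the disjoint $Q_j\in\F_2$, invoking the strong $(p,p)$ bound for $\M$ and using \eqref{eq4.6-bis} together with ADR produces $\sigma(\bigcup\F_2)\lesssim K^{-p}\sigma(Q_0)$. For $\F_1$, set $E_1:=\bigcup\F_1$. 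On one hand, summing (I) over $Q_j\in\F_1$ gives $\hm(E_1) < \sigma(E_1)/2$, so by \eqref{eq4.6},
$$\hm(Q_0\setminus E_1)\,\ge\,\hm(Q_0)-\hm(E_1)\,\ge\,\sigma(Q_0)-\tfrac{1}{2}\sigma(E_1).$$
On the other hand, H\"older together with \eqref{eq4.6-bis} yields $\hm(Q_0\setminus E_1)=\int_{Q_0\setminus E_1}k\,d\sigma\lesssim \sigma(Q_0)^{1/p}\sigma(Q_0\setminus E_1)^{1/p'}$. Comparing the two estimates and solving for $\sigma(Q_0\setminus E_1)/\sigma(Q_0)$ produces a uniform positive lower bound $\sigma(Q_0\setminus E_1)\ge c_1\sigma(Q_0)$, with $c_1$ depending only on the hypothesized constants. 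Finally, choosing $K$ large enough that $\sigma(\bigcup\F_2)\le c_1\sigma(Q_0)/2$, we conclude \eqref{eq4.8} with $c_0:=c_1/2$.

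The main obstacle is the $\F_1$ step: inverting H\"older's inequality to extract a uniform positive lower bound on $\sigma(Q_0\setminus E_1)$. The hypothesis $p>1$ is indispensable here, as is the normalization $\hm(Q_0)\ge\sigma(Q_0)$ produced by Bourgain's lower estimate (a); without the latter, the deficit $\hm(Q_0)-\hm(E_1)$ could vanish and the self-improvement argument would collapse.
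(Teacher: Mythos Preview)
Your proof is correct and follows essentially the same stopping-time approach as the paper: the same two-criterion selection for $\F$, the same use of the $L^p$ maximal function bound together with \eqref{eq4.6-bis} to control the type-(II) cubes, and the same H\"older inversion to extract the ampleness bound. The only cosmetic difference is bookkeeping---the paper decomposes $\hm(Q_0)$ into three pieces and hides the two small ones before applying H\"older once to the remainder, whereas you first bound $\sigma(Q_0\setminus E_1)$ independently of $K$ and then subtract the $\F_2$ contribution; both routes yield the same conclusion with the same dependencies.
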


\begin{proof}
The proof is based on a stopping time argument similar to those used in the proof of the
Kato square root conjecture \cite{HMc},\cite{HLMc}, \cite{AHLMcT}, and in local $Tb$ theorems.  We begin by noting that
\begin{multline}\label{eq4.11}
\fint_{Q_0} \M\big(k1_{2\,Q_0}\big)\,d\sigma\,
\leq
\left(\fint_{Q_0} \left(\M\big(k 1_{2\,Q_0}\big)\right)^p\,d\sigma\right)^{1/p}
\\
\le C_1\left(\fint_{2\,Q_0}k^p\, d\sigma\right)^{1/p}
\,\le C_1\, C_0^{1+\frac1p}
=:C_2,
\end{multline}
where we have used the $L^p(\sigma)$ boundedness of the Hardy-Littlewood maximal function and \eqref{eq4.6-bis}.
We now let $\F=\{Q_j\}\subset \dd_{Q_0}$ be the collection of sub-cubes that are maximal
with respect to the property that either
\begin{equation}\label{eq4.12}
\frac{\hm(Q_j)}{\sigma(Q_j)}\,< \,\frac12\,,
\end{equation}
and/or
\begin{equation}\label{eq4.13}
\fint_{Q_j} \M\big(k 1_{2\,Q_0}\big)\,d\sigma\,>\, C_2\,K\,,
\end{equation}
where $K\ge 1$ is a sufficiently large number to be chosen momentarily.  Note that
$\F=\{Q_j\}\subset \dd_{Q_0}\setminus\{Q_0\}$ by \eqref{eq4.6} and \eqref{eq4.11}.
We shall say that
$Q_j$ is of ``type I" if \eqref{eq4.12} holds, and $Q_j$ is of ``type II"
if \eqref{eq4.13} holds but \eqref{eq4.12} does not.  Set $A:= Q_0\setminus (\cup_j Q_j)$,
and $F:= \cup_{Q_j  {\rm \,type\, II}}\, Q_j$.
Then by \eqref{eq4.6},
\begin{equation}\label{eq4.13a}
\sigma(Q_0)\leq\hm(Q_0) = \, \sum_{Q_j  {\rm \,type\, I}} \hm(Q_j)
\,+\,\hm(F)\,+\,\hm(A)\,.
\end{equation}
By definition of the type I cubes,
\begin{equation}\label{eq4.14}
\sum_{Q_j  {\rm \,type\, I}} \hm(Q_j)\leq \frac12\sum_j\sigma(Q_j) \leq \frac12 \sigma(Q_0)\,.
\end{equation}
To handle the remaining terms, observe that
\begin{multline}\label{eq4.16}
\sigma(F) = \sum_{Q_j  {\rm \,type\, II}}\, \sigma(Q_j)\,
\le
\,
\frac1{C_2\,K}\, \sum_j \int_{Q_j} \M\big(k1_{2\, Q_0}\big)\,d\sigma
\\[4pt]
\leq \, \frac1{C_2\,K}\,  \int_{Q_0} \M\big(k1_{2\,Q_0 }\big)\,d\sigma\,
\leq\,\,\frac1K\,\sigma(Q_0)\,,
\end{multline}
by the definition of the type II cubes and \eqref{eq4.11}.
Combining \eqref{eq4.6-bis} and \eqref{eq4.16},
we find that
\begin{equation}\label{eq4.17}
\hm(F) =\int_F k\,d\sigma
\le
\left(\int_{2\,Q_0} k^p\,d\sigma\right)^{\frac1p}
\,\sigma(F)^{\frac1{p'}}
\, \leq\,
C_0^{1+\frac1p}
K^{-1/p'}\sigma(2\,Q_0) \,
\le
\,\frac14\,\sigma(Q_0)\,,
\end{equation}
by choice of $K$ large enough.
By \eqref{eq4.14} and \eqref{eq4.17}, we may hide the two small terms on the left hand side of
\eqref{eq4.13a}, and then use \eqref{eq4.6-bis}, to obtain
$$
\sigma(Q_0) \leq \,4\,\hm(A)\, =\, 4\int_A k \, d\sigma\,
\lesssim
\left(\int_{2\,Q_0} k^p\,d\sigma\right)^{\frac1p}
\,\sigma(A)^{\frac1{p'}}
\lesssim
\,\sigma(A)^{1/p'}\sigma(Q_0)^{\frac1p}\,.
$$
Estimate \eqref{eq4.8} now follows readily.  Moreover, \eqref{eq4.9} holds, by the maximality of
the cubes $Q_j$, and our choice of $K$.
\end{proof}

We recall that the ball $B_Q^*$ and surface ball $\Delta^*_Q$ are defined in \eqref{eq2.bstar}.

\begin{lemma}\label{l4.1}
Under the notation of Lemma \ref{l4.4}, if the constant $K_0$ in \eqref{eq2.1} is chosen sufficiently large,
for each $Q\in\dd_{\F, Q_0}$ with $\ell(Q)\le K_0^{-1}\,\ell(Q_0) $ there exists $Y_{Q}\in U_Q$ with $\delta(Y_Q)\le |Y_Q-x_Q|\lesssim \ell(Q)$ (where the implicit constant is independent of $K_0$) such that
\begin{equation}\label{eq4.2}
\frac{\hm^{X_{0}}(Q)}{\sigma(Q)}\,\leq C\,|\nabla G(X_{0},Y_{Q})|,
\end{equation}
where $C$ depends on $K_0$ and the implicit constants in the hypotheses of Theorem \ref{t1}.
\end{lemma}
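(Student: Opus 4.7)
The plan is to apply the Green identity \eqref{eq2.14} with a suitable cutoff $\phi$ and then extract $Y_Q$ by a pigeonhole argument. Let $\phi \in C_c^\infty(\ree)$ satisfy $\phi \equiv 1$ on a ball containing $Q$ of radius $\approx \ell(Q)$ centered at $x_Q$, supported in its concentric double, with $|\nabla\phi| \lesssim 1/\ell(Q)$. The hypothesis $\ell(Q) \le K_0^{-1}\ell(Q_0)$ together with $\delta(X_0) \ge C_0^{-1} r_{Q_0} \approx \ell(Q_0)$ ensures $X_0 \notin \supp\phi$ for $K_0$ large enough, so $\phi(X_0) = 0$, and since $\phi \ge 1_Q \ge 0$, \eqref{eq2.14} yields
\begin{equation*}
\omega^{X_0}(Q) \le \int\phi\,d\omega^{X_0} = -\iint_\Omega \nabla\phi(Y)\cdot\nabla_Y G(Y,X_0)\,dY \lesssim \frac{1}{\ell(Q)}\iint_{\supp\nabla\phi\,\cap\,\Omega}|\nabla G(Y,X_0)|\,dY.
\end{equation*}

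I would then split the integration region by Whitney depth, setting $A_{\mathrm{in}} := \{Y \in \supp\nabla\phi \cap \Omega : \delta(Y) \ge \eta\ell(Q)\}$ and $A_{\mathrm{out}}$ its complement, with $\eta \approx K_0^{-1}$. For $K_0$ chosen sufficiently large, the region $A_{\mathrm{in}}$ -- consisting of points at Whitney depth in $[K_0^{-1}\ell(Q),\,O(\ell(Q))]$ within horizontal distance $O(\ell(Q))$ of $x_Q$ -- is contained in the Whitney region $U_Q$ of \eqref{eq2.1}. The $A_{\mathrm{in}}$-contribution is handled directly: the volume bound $|A_{\mathrm{in}}| \lesssim \ell(Q)^{n+1}$ gives $(1/\ell(Q))\iint_{A_{\mathrm{in}}}|\nabla G| \lesssim \ell(Q)^n \sup_{Y \in A_{\mathrm{in}}}|\nabla G(Y,X_0)|$, so selecting $Y_Q \in A_{\mathrm{in}}$ essentially realizing this supremum yields $\omega^{X_0}(Q)/\sigma(Q) \lesssim |\nabla G(Y_Q,X_0)|$; the geometric bounds $\delta(Y_Q) \le |Y_Q - x_Q| \lesssim \ell(Q)$ are automatic from the support of $\phi$.

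For the $A_{\mathrm{out}}$-contribution, the interior gradient estimate for harmonic functions in $B(Y,\delta(Y)/2) \subset \Omega$ combined with Lemma \ref{l2.10} yields $|\nabla G(Y,X_0)| \lesssim \omega^{X_0}(\Delta_Y)/\sigma(\Delta_Y)$. A Whitney decomposition of $A_{\mathrm{out}}$, summed over generations $k$ with $2^{-k} \lesssim \eta\ell(Q)$ and using bounded overlap of the projected surface balls $\tilde\Delta_I$, then reduces matters to a bound of the form
\begin{equation*}
\frac{1}{\ell(Q)}\iint_{A_{\mathrm{out}}}|\nabla G(Y,X_0)|\,dY \lesssim \eta\,\omega^{X_0}(C'B_Q \cap \pom),
\end{equation*}
for a suitable constant $C'$.

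The main obstacle is then controlling $\omega^{X_0}(C'B_Q \cap \pom)$ by a multiple of $\omega^{X_0}(Q)$, since $\omega^{X_0}$ is not assumed doubling. The stopping time Lemma \ref{l4.4} supplies the required ingredients: the pointwise bound $k^{X_0} \le M(k^{X_0} 1_{2Q_0})$ valid on $2Q_0$, combined with the maximal-function estimate in \eqref{eq4.9} (which extends to cubes $Q' \in \F$ through maximality of their parents), gives $\omega^{X_0}(Q') \lesssim \sigma(Q')/\sigma(Q_0)$ for every $Q' \subset 2Q_0$ of generation $k(Q)$; the lower bound in \eqref{eq4.9} gives $\omega^{X_0}(Q) \gtrsim \sigma(Q)/\sigma(Q_0)$. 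Covering $C'B_Q \cap \pom$ by boundedly many cubes of generation $k(Q)$, together with the $L^p$ integrability \eqref{eq4.6-bis} to handle any subcubes of bad cubes in $\F$ that may intervene, should yield $\omega^{X_0}(C'B_Q \cap \pom) \lesssim \omega^{X_0}(Q)$, with implicit constants possibly depending on $K_0$. Choosing $\eta$ small enough then absorbs the $A_{\mathrm{out}}$-contribution into the left-hand side; the resulting $K_0$-dependence of the final constant $C$ is permitted by the statement of the lemma.
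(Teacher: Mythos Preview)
Your overall strategy --- Green identity with a cutoff, split the gradient integral by Whitney depth into an ``inner'' piece $A_{\rm in}\subset U_Q$ and a boundary layer $A_{\rm out}$, then absorb the latter --- matches the paper exactly. Your treatment of $A_{\rm out}$ via the CFMS bound and a linear Whitney sum (yielding the factor $\eta$) is a harmless variant of the paper's approach, which instead passes through the H\"older decay of Corollary \ref{cor2.12} to gain $(\ell(I)/\ell(Q))^\alpha$ and picks up the small factor $K_0^{-\alpha}$. Both sums converge, so either route works.

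There is, however, a genuine gap in your final step, where you need $\hm^{X_0}(C'B_Q\cap\pom)\lesssim \hm^{X_0}(Q)$. Your proposed mechanism --- cover $C'\Delta_Q$ by cubes $Q'$ of generation $k(Q)$ and bound $\hm^{X_0}(Q')\lesssim\sigma(Q')/\sigma(Q_0)$ for each --- fails for those $Q'$ strictly contained in a stopping cube $Q_j\in\F$ with $\ell(Q_j)>\ell(Q)$: the maximal-function bound \eqref{eq4.9} is available only on $Q$ (and, by maximality, on the parents of the $Q_j$), not on arbitrary $Q'$. The $L^p$ estimate \eqref{eq4.6-bis} is far too weak here, giving only $\hm(Q')\lesssim\sigma(Q')^{1/p'}\sigma(Q_0)^{1/p}$, which does not compare to $\sigma(Q')$ when $\ell(Q)\ll\ell(Q_0)$.

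The fix is much simpler than your proposed detour, and is exactly what the paper does (see the chain \eqref{eqn:G-CKS-I}): for any $x\in Q$, the dilate $C'\Delta_Q$ is a surface ball (roughly) containing $x$ and contained in $2Q_0$ (take $K_0$ large relative to $C'$), so by definition of the maximal function,
\[
\frac{\hm(C'\Delta_Q)}{\sigma(C'\Delta_Q)}\,=\,\fint_{C'\Delta_Q} k\,d\sigma
\,\le\, \M\big(k1_{2Q_0}\big)(x)\,.
\]
Averaging over $x\in Q$ and invoking \eqref{eq4.9} directly gives $\hm(C'\Delta_Q)\lesssim\sigma(C'\Delta_Q)\approx_{C'}\sigma(Q)$; combined with the lower bound $\hm(Q)\ge\frac12\sigma(Q)$ from \eqref{eq4.9}, this yields $\hm^{X_0}(C'\Delta_Q)\lesssim\hm^{X_0}(Q)$, and your absorption step goes through.
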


\begin{remark}\label{remark:X0-YQ}
Our choice of $K_0$ in the previous result will guarantee that $\delta(X_0)\approx \ell(Q_0)\ge K_0^{-1/2}\,\ell(Q_0)$. Note also that the point $Y_Q$ is an effective corkscrew relative to $Q$ since $\delta(Y_Q)\gtrsim K_0^{-1}\ell(Q)$ (as $Y\in U_Q$) and also $|Y_Q-x_Q|\lesssim \ell(Q)$ (with constant independent of $K_0$). Abusing the notation we will say that $Y_Q$ is a corkscrew point relative to $Q$ and we observe that the corresponding constant depends on $K_0$.
\end{remark}

\begin{proof}
Fix $Q\in\dd_{\F, Q_0}$ with $\ell(Q)\le K_0^{-1}\,\ell(Q_0) $ and $K_0$ large enough to be chosen. Recall that $\delta(X_0)\approx \ell(Q_0)$ and therefore if $K_0$ is large enough we may assume that $\delta(X_0)\approx \ell(Q_0)\ge K_0^{-1/2}\,\ell(Q_0)$. Recall \eqref{cube-ball} and write $\hat{B}_Q=B(x_Q, \hat{r}_Q)$, $\hat{\Delta}_Q=\hat{B}_Q\cap E$ so that $\hat{r}_{Q}\approx \ell(Q)$ and $Q\subset\frac12\,\hat{\Delta}_Q$. Let $0\le \phi_Q\in C_0^\infty(\hat{B}_Q)$ so that $\phi_Q\equiv 1$ in $\frac12\,\hat{B}_Q$ and $\|\nabla \phi_Q\|\lesssim \ell(Q)^{-1}$. Note that
$$
K_0^{1/2}\,\ell(Q)
\le
K_0^{-1/2}\,\ell(Q_0)
\lesssim\delta(X_0)\le |X_0-x_Q|
$$
which implies that $X_0\notin 4\,\hat{B}_Q$ provided $K_0$ is large enough. We can next use \eqref{eq2.14} (if needed we can slightly move $X_0$ so that the equality holds at $X_0$ and then use Harnack's inequality when needed to move back to $X_0$, details are left to the interested reader). Then
\begin{multline}\label{eqn:estw}
\ell(Q)\,\omega^{X_0}(Q)
\le
\ell(Q)\,\int_{\pom} \phi_Q\,d\omega^{X_Q} \lesssim \iint_{\hat{B}_Q\cap\Omega} |\nabla G(X_0,Y)|\,dY
\\
\le
\iint_{\hat{B}_Q\cap U_Q} |\nabla G(X_0,Y)|\,dY + \iint_{(\hat{B}_Q\cap \Omega )\setminus U_Q} |\nabla G(X_0,Y)|\,dY
=
:\mathcal{I}+\mathcal{II}.
\end{multline}

Notice that by construction $(\hat{B}_Q\cap \Omega )\setminus U_Q\subset \{Y\in \hat{B}_Q: \delta(Y)\lesssim K_0^{-1}\,\ell(Q) \}$. Writing  $\Sigma_Q:=\{I\in \W: I\cap \hat{B}_Q\neq\emptyset, \ell(I)\lesssim K_0^{-1}\,\ell(Q)\}$ and using interior estimates (note that $4\,I\subset\Omega$) we have
$$
\mathcal{II}
\le
\sum_{I\in \Sigma_Q} \iint_I |\nabla G(X_0,Y)|\,dY
\lesssim
\sum_{I\in \Sigma_Q} \ell(I)^n\, G(X_0,Y_I)
$$
where $Y_I$ is the center of $I$. We next use Corollary \ref{cor2.12} and Lemma \ref{lemma:G-aver} (we may need to take $K_0$ larger) observe that
\begin{multline}\label{eqn:G-CKS-I}
G(X_0,Y_I)\lesssim
\left(\frac{\ell(I)}{\ell(Q)}\right)^\alpha\,\frac1{|2\,\hat{B}_Q|}\iint_{2\,\hat{B}_Q\cap \Omega} G(X_0,Y)\,dY
\lesssim
\left(\frac{\ell(I)}{\ell(Q)}\right)^\alpha\,
\ell(Q)\,\frac{\hm^{X_0}(C\,Q)}{\sigma(C\,Q)}
\\
\le
\sigma(Q_0)^{-1}\left(\frac{\ell(I)}{\ell(Q)}\right)^\alpha\,\ell(Q)
\fint_Q \M\big(k1_{2_{Q_0}}\big)\,d\sigma
\lesssim
\sigma(Q_0)^{-1}
\left(\frac{\ell(I)}{\ell(Q)}\right)^\alpha\,\ell(Q),
\end{multline}
where we recall that $\hm=C_0\,\sigma(Q_0)\,\hm^{X_0}$, we have used that if $K_0$ is large enough and $Q\in\dd_{Q_0}$ with $\ell(Q)\le K_0^{-1}\ell(Q_0)$ then $C\,Q\subset 2\,Q_0$,  and we have also invoked \eqref{eq4.9} (recall that $Q\in \dd_{\F,Q_{0}}$). As before let $Q_I\in\dd$ be so that $\ell(Q_I)=\ell(I)$ and $\dist(I,E)=\dist(I,Q_I)$ and
observe that since $I$ meets $\hat{B}_Q$ then $Q_I\subset C\,Q$ for some uniform constant. Let us observe that from the Properties of the Whitney cubes, for every $k$, the family of cubes $\{Q_I\}_{\ell(I)=2^{-k}}$ has bounded overlap uniformly in $k$. Hence we can use \eqref{eqn:G-CKS-I}, \eqref{eq4.9}, and the definition of $\omega$ to obtain
\begin{align*}
\mathcal{II}
&\lesssim
\sigma(Q_0)^{-1}
\sum_{I\in \Sigma_Q} \ell(I)^n\, \left(\frac{\ell(I)}{\ell(Q)}\right)^\alpha\,\ell(Q)
\\
&\lesssim
\sigma(Q_0)^{-1}
\ell(Q)^{1-\alpha}
\sum_{k:2^{-k}\lesssim K_0^{-1}\,\ell(Q)}2^{-k\,\alpha}\sum_{I\in \Sigma_Q:\ell(I)=2^{-k}}\sigma(Q_I)
\\
&\lesssim
\sigma(Q_0)^{-1}
K_0^{-\alpha}\,\ell(Q)\,\sigma(C\,Q)
\\
&
\le
\frac12\,\ell(Q)\,\hm^{X_0}(Q),
\end{align*}
provided $K_0$ is large enough. We can then plug this estimate in \eqref{eqn:estw} and hide it to obtain that
\begin{multline*}
\ell(Q)\,\omega^{X_0}(Q)
\le \mathcal{I}
\le
\iint_{\hat{B}_Q\cap U_Q} |\nabla G(X_0,Y)|\,dY
=
\sum_i \iint_{\hat{B}_Q\cap U_Q^i} |\nabla G(X_0,Y)|\,dY
\\
\lesssim
\ell(Q)^{n+1}\max_i \sup_{Y\in \hat{B}_Q\cap U_Q^{i_0}}|\nabla G(X_0,Y)|
\lesssim
\ell(Q)\,\sigma(Q)\max_i \sup_{Y\in \hat{B}_Q\cap U_Q^{i_0}}|\nabla G(X_0,Y)|,
\end{multline*}
where we recall that number of components of $U_Q$ is uniformly bounded. This clearly implies that we can find $Y_Q\in \hat{B}_Q\cap U_Q^{i}$ for some $i$
such that  $\omega^{X_0}(Q)/\sigma(Q)\lesssim |\nabla G(X_0,Y_Q)|$. To complete the proof we simply observe that $\delta(Y_Q)\le |Y_Q-x_Q|\le \hat{r}_Q\lesssim\ell(Q)$.
\end{proof}

\section{Proof of Theorem \ref{t1}, step 2:   the Lewis-Vogel argument}\label{s6}

We recall that $B_Q^{***}(\eps)= B(x_Q,\eps^{-5}\ell(Q))$, as in \eqref{eq2.bstarstar}.
Set $\Delta_{Q}^{***}(\eps):= E\cap B_{Q}^{***}(\eps)$.

%

Our proof here is a refinement/extension of the arguments in \cite{LV}, who, as mentioned in the introduction, treated
the special case that the Poisson kernel $k\approx 1$.
Our goal in this section is to show that $E=\pom$ satisfies WHSA, and hence is UR, by Proposition \ref{prop2.20}.
Turning to the details, we fix $Q_0\in\dd(\pom)$, and we set
$$
u(Y):=
C_0 \,\sigma(Q_0)\,G(X_0,Y)\,,
$$
where $X_0$ is the ``corkscrew'' associated with $\Delta_{Q_0}$ (see the beginning of Section \ref{s4}) and $C_0$ is the constant in \eqref{eqn:Bour+Ainfty}.
As above, for the same constant $C_0$, we set
$$\hm := \,C\,\sigma(Q_0) \,\hm^{X_0}\,,$$
and we recall that by \eqref{eq4.6},
\begin{equation}\label{eq6.1}
\frac{\hm(Q_0)}{\sigma(Q_0)} \approx 1\,.
\end{equation}

Let $\F=\{Q_j\}_j$ be the family of
maximal stopping time cubes constructed in Lemma \ref{l4.4}.
Combining \eqref{eq4.2} and \eqref{eq4.9}, 
we see that
\begin{equation}\label{eq6.5}
 |\nabla u(Y_Q)| \gtrsim \, 1\,,\qquad \forall \, Q\in \dd_{\F,Q_0}^*:=\{Q\in \dd_{\F,Q_0}: \ \ell(Q)\le K_0^{-1}\,\ell(Q_0)\}\,,
\end{equation}
where $Y_Q\in U_Q$ is the point constructed in Lemma \ref{l4.1}.  We recall
that the  Whitney region $U_{Q}$ has a uniformly bounded number
of connected components, which we have enumerated as $\{U^i_{Q}\}_i$.
We now fix the particular $i$
such that  $Y_Q\in U^i_Q\subset \tU^i_Q$, where the latter is the enlarged Whitney region
constructed in Definition \ref{def2.11a}.

For a suitably small $\eps_0$, say $\eps_0\ll K_0^{-6}$,
we fix an arbitrary positive $\eps<\eps_0$, and we fix also a large positive number $M$
to be chosen.
For each point $Y\in\Omega$, we set
\begin{equation}\label{eq5.1}
B_Y:= \overline{B\big(Y,(1-\eps^{2M/\alpha})\delta(Y)\big)}\,,\qquad \widetilde{B}_Y:= \overline{B\big(Y,\delta(Y)\big)}\,,
\end{equation}
where $\alpha>0$ is the DG/N exponent at the boundary (see Corollary \ref{cor2.12}).

For $Q\in\dd_{\F,Q_0}$,  we consider three cases.

\noindent{\bf Case 0}: $Q\in\dd_{\F,Q_0}$, with $\ell(Q)> \eps^{10}\,\ell(Q_0)$.

\smallskip
\noindent{\bf Case 1}: $Q\in \dd_{\F,Q_0}$, with $\ell(Q)\le \eps^{10}\,\ell(Q_0)$
(in particular $Q\in \dd_{\F,Q_0}^*$;  see \eqref{eq6.5}), and
\begin{equation}\label{eq5.3}
\sup_{X,Y\in \tU^i_{Q}}\,\sup_{Z_1\in B_Y,\,Z_2\in B_X} |\nabla u(Z_1)
 -\nabla u(Z_2)|\,>\, \eps^{2M}\,.
\end{equation}

\smallskip
\noindent{\bf Case 2}: $Q\in \dd_{\F,Q_0}$, with $\ell(Q)\le \eps^{10}\,\ell(Q_0)$ 
(in particular $Q\in \dd_{\F,Q_0}^*$), and
\begin{equation}\label{eq5.2}
\sup_{X,Y\in \tU^i_{Q}}\,\sup_{Z_1\in B_Y,\,Z_2\in B_X} |\nabla u(Z_1)
 -\nabla u(Z_2)|\,\leq\, \eps^{2M}\,.
\end{equation}

We trivially see that the cubes in Case 0 satisfy a packing condition:
\begin{equation}\label{eqn-case0-pack}
\sum_{\substack{Q\in \dd_{\F,Q_0} \\ {\rm Case\, 0 \, holds}}}\sigma(Q)
\le
\sum_{Q\in\dd_{Q_0},\, \ell(Q)>\eps^{10}\,\ell(Q_0)} \sigma(Q)
\lesssim
(\log \eps^{-1})\,\sigma(Q).
\end{equation}

Before proceeding further, 
let us first note that if $\ell(Q)\leq \eps^{10}\ell(Q_0)$, then by \eqref{eq6.5},  \eqref{eqn:right-CFMS:cubes}
(which we may apply with $X=X_0$, since $\ell(Q)\ll\ell(Q_0))$, and \eqref{eq4.9},
\begin{equation}\label{eq6.9*}
1\lesssim  |\nabla u(Y_Q) |\lesssim \frac{u(Y_Q)}{\delta(Y_Q)}\lesssim 1\,.
\end{equation}

Next, we treat Case 1, and for these cubes, we shall also obtain a packing condition.
We now augment $\tU^i_{Q}$ as follows.
Set
\begin{equation*}
\W^{i,*}_{Q}:= \left\{I\in \W: I^*\,\, {\rm meets}\, B_Y\, {\rm for\, some\,} Y \in \left(\cup_{X\in\tU^i_{Q}}
B_X\right)\right\}
\end{equation*}
(and define $\W^{j,*}_{Q}$ analogously for all other $\tU^j_Q$), and set
$$
U^{i,*}_{Q} := \bigcup_{I\in\W^{i,*}_{Q}} I^{**}\,,\qquad U^{*}_{Q} := \bigcup_j U^{j,*}_{Q}
$$
where $I^{**}=(1+2\tau)I$ is a suitably fattened Whitney cube, with $\tau$ fixed as above.
By construction,
$$
\tU^i_{Q}\,
\subset\,
\bigcup_{X\in\tU^i_{Q}} B_X\,
\subset
\bigcup_{Y\in\cup_{X\in\tU^i_{Q}} B_X} B_Y
\subset\, U^{i,*}_{Q}\,,
$$
and for all $Y\in U_{Q}^{i,*}$, we have that $\delta(Y)\approx \ell(Q)$ (depending of course on $\eps$).
Moreover, also by construction, 
there is a Harnack path connecting any pair of points in 
$U^{i,*}_{Q}$ (depending again on $\eps$),
and furthermore, for every
$I\in\W^{i,*}_{Q}$ (or for that matter for every $I\in \W^{j,*}_{Q},\, j\neq i$),
$$
\eps^{s}\,\ell(Q)\lesssim \ell(I) \lesssim \eps^{-3}\,\ell(Q),
\qquad
\dist(I,Q)\lesssim \eps^{-4} \,\ell(Q)\,,
$$
where $0<s =s(M,\alpha)$.
Thus, by Harnack's
inequality and \eqref{eq6.9*}, 
\begin{equation}\label{eq6.9}
C^{-1} \delta(Y)\,\leq\,u(Y) \,\leq \,C \delta(Y)\,, \qquad \forall\, Y\in U^{i,*}_{Q}\,,
\end{equation}
with $C=C(K_0,\eps,M)$, where we have used that $u$ is a solution in $\om$ away from the pole at $X_0$,
and that $X_0$ is far from $U_Q^*$, since $\ell(Q)\ll\ell(Q_0)$.   Moreover, for future reference, we note that 
the upper bound for $u$ holds in all of $U_Q^*$, i.e.,
\begin{equation}\label{eq4.10a}
u(Y) \,\leq \,C \delta(Y)\,, \qquad \forall\, Y\in U^{*}_{Q}\,,
\end{equation}
by  \eqref{eqn:right-CFMS:cubes} and \eqref{eq4.9}, where again $C=C(K_0,\eps,M)$.
Choosing $Z_1,Z_2$ as in \eqref{eq5.3}, and then using the mean value property of harmonic functions,
we find that
$$\eps^{2M} \leq \,C_\eps\left(\ell(Q)\right)^{-(n+1)}\iint_{B_{Z_1}\cup\, B_{Z_2}}|\nabla u(Y) -\vec{\beta}| dY\,,$$
where $\vec{\beta}$ is a constant vector at our disposal.
In turn, by Poincar\'e's inequality (see, e.g., \cite[Section 4]{HM-I} in this context),
we obtain that
$$
\sigma(Q)\lesssim\,  \iint_{U^{i,*}_{Q}} |\nabla^2 u(Y)|^2 \delta(Y)\,dY
\lesssim
 \iint_{U^{i,*}_{Q}} |\nabla^2 u(Y)|^2 u(Y)\,dY\,,
 $$
where the implicit constants depend on $\eps$, and
in the last step we have used \eqref{eq6.9}.
Consequently,
\begin{multline}\label{eq6.11-ant}
\sum_{\substack{Q\in \dd_{\F,Q_0}\\ {\rm Case\, 1 \, holds} }}  \sigma(Q)
\lesssim
\sum_{\substack{Q\in \dd_{\F,Q_0}\\ \ell(Q)\leq\eps^{10}\ell(Q_0) }}
\iint_{U^{*}_Q} |\nabla^2 u(Y)|^2 u(Y)\,dY
\\[4pt]
\lesssim  \, 
\iint_{\Omega^{*}_{\F,Q_0} } |\nabla^2 u(Y)|^2 u(Y)\,dY,
 \end{multline}
where
\begin{equation}\label{sawtooth-appen-HMM}
\Omega^{*}_{\F,Q_0} := \interior\bigg(\bigcup_{\substack{Q\in \dd_{\F,Q_0}\\ \ell(Q)\leq\eps^{10}\ell(Q_0) }} 
U^{*}_Q\bigg),
\end{equation}
and where we have used that the enlarged Whitney regions $U^{*}_Q$ have bounded overlaps.

Take an arbitrary $N>1/\eps$ (eventually $N\to\infty$), and augment 
$\F$ by adding to it all subcubes $Q\subset Q_0$ with $\ell(Q)\leq 2^{-N}\,\ell(Q_0)$.  Let
$\F_N\subset\dd_{Q_0}$ denote the collection of maximal cubes of this augmented family. 
Thus, $Q\in\dd_{\F_N, Q_0}$ iff $Q\in\dd_{\F,Q_0}$ and $\ell(Q)>2^{-N}\,\ell(Q_0)$. Clearly, $\dd_{\F_N, Q_0}\subset \dd_{\F_{N'}, Q_0}$ if $N\le N'$ and therefore $\Omega^{*}_{\F_N,Q_0}\subset \Omega^{*}_{\F_{N'},Q_0}$ 
(where $\Omega^{*}_{\F_N,Q_0}$ is defined as in \eqref{sawtooth-appen-HMM} with 
$\F_N$ replacing $\F$). By monotone convergence and \eqref{eq6.11-ant}, we have that
\begin{equation}\label{eq6.11}
\sum_{\substack{Q\in \dd_{\F,Q_0}\\ {\rm Case\, 1 \, holds} }}  \sigma(Q)
\lesssim
\limsup_{N\to \infty}
\iint_{\Omega^{*}_{\F_N,Q_0} } |\nabla^2 u(Y)|^2 u(Y)\,dY.
 \end{equation}
It therefore suffices to establish bounds for the latter integral that are uniform in $N$, with $N$ large.

Let us then fix $N>1/\eps$.   Since $\Omega^{*}_{\F_N,Q_0}$ is a finite union of fattened
Whitney boxes,
 we may now integrate by parts, using the identity
 $2|\nabla \partial_k u|^2 = \dv \nabla (\partial_k u)^2$ for harmonic functions, to obtain that
 \begin{multline}\label{eq6.12}
  \int\!\!\int_{\Omega^{*}_{\F_N,Q_0}} 
 |\nabla^2 u(Y)|^2 u(Y)\,dY \lesssim \int_{\partial \Omega^{*}_{\F_N,Q_0}} 
 \left(|\nabla ^2 u|\, |\nabla u|\, u + |\nabla u|^3\right) dH^n
 \\[4pt]
 \leq \,C_\eps \,H^n(\partial \Omega^{*}_{\F_N,Q_0}) 
 \end{multline}
 where in the second inequality we have used 
the standard estimate
  $$\delta(Y)  |\nabla^2 u(Y)| \,\lesssim  \,|\nabla u(Y)|
\, \lesssim\, \frac{u(Y)}{\delta(Y) }\,,$$ 
 along with \eqref{eq4.10a}.  We observe that $\Omega^{*}_{\F_N,Q_0}$ is a sawtooth domain in the sense of
 \cite{HMM}, or to be more precise, it is a union of a bounded number, depending on $\eps$, of such sawtooths,
the maximal cube for each of which is a sub-cube of $Q_0$ with length on the order of $\eps^{10}\ell(Q_0)$.
Thus, by \cite[Appendix A]{HMM}, $\partial \Omega^{*}_{\F_N,Q_0}$ is ADR, uniformly in $N$, and therefore
$$H^n(\partial \Omega^{*}_{\F_N,Q_0}) \,\leq \,C_\eps \left(\diam(\partial \Omega^{*}_{\F_N,Q_0})\right)^n 
\,\leq\, C_\eps \,\sigma(Q_0)\,.$$
Combining the latter estimate with \eqref{eq6.11} and \eqref{eq6.12}, we obtain
\begin{equation}\label{eq6.13}
\frac{1}{ \sigma (Q_0)}
\sum_{Q\in \dd_{\F,Q_0}:\, {\rm Case\, 1 \, holds}}\sigma(Q) \leq C(\eps,K_0,M,\eta)\,.
\end{equation}

Now we turn to Case 2.  
We claim that for every $Q$ as in Case 2, the $\eps$-local WHSA property
(see Definition \ref{def2.13}) holds, provided that $M$ is taken large enough.
Momentarily taking this claim for granted,
we may complete
the proof of Theorem \ref{t1} as follows.  Given the claim, it follows that the cubes $Q\in \dd_{\F,Q_0}$,
which belong to the bad collection $\B$ of cubes in $\dd(\pom)$
for which the $\eps$-local WHSA condition fails, must be as in Case 0 or Case 1, and therefore, by \eqref{eqn-case0-pack} and \eqref{eq6.13}, satisfy the packing estimate
\begin{equation}\label{pack-saw}
\sum_{Q\in\B\cap \dd_{\F,Q_0}}\! \sigma(Q)\, \leq\, C_\eps \,\sigma(Q_0)\,.
\end{equation}
For each $Q_0\in\dd(\pom)$, there is a family $\F\subset\dd_{Q_0}$
for which \eqref{pack-saw}, and also the ``ampleness" condition \eqref{eq4.8}, hold uniformly.
We may therefore invoke a well known lemma of John-Nirenberg type to deduce that
\eqref{eq2.pack2} holds for all $\eps\in (0,\eps_0)$, 
and therefore to conclude that $\pom$ satisfies the WHSA condition 
(Definition \ref{def2.14}), 
and hence is UR,
by Proposition \ref{prop2.20}.

Thus, it remains to show that every $Q$ as in Case 2 satisfies the $\eps$-local WHSA property. 
In fact, we shall prove the following.   Given $\eps>0$, we set
$$\bqo=\bqo(\eps):= B\left(x_Q,\eps^{-8}\ell(Q)\right)\,,\qquad \dqo:= \bqo\cap\pom.$$
\begin{lemma}\label{LVlemma}  Fix $\eps\in (0,K_0^{-6})$.   Suppose that $u\geq 0$ is harmonic in
$\om_Q:= \om\cap\bqo$, $u\in C(\overline{\om_Q})$, $u\equiv 0$ on $\dqo$.  Suppose also that
for some $i$, there exists a point $Y_Q\in U_Q^i$ such that
\begin{equation}\label{eq4.17a}
|\nabla u(Y_Q)| \approx 1\,,
\end{equation}
and furthermore, that
\begin{equation}\label{eq4.18}
\sup_{B_Q^{***}} u \lesssim \eps^{-5} \ell(Q)\,;
\end{equation}
\begin{equation}\label{eq4.19}
\sup_{X,Y\in \tU^i_{Q}}\,\sup_{Z_1\in B_Y,\,Z_2\in B_X} |\nabla u(Z_1)
 -\nabla u(Z_2)|\,\leq\, \eps^{2M}\,.
\end{equation}
Then $Q$ satisfies the $\eps$-local WHSA, provided that $M$ is large enough, depending only on dimension
and on the implicit constants in the stated hypotheses.
\end{lemma}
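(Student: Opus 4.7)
The plan is to read off from \eqref{eq4.17a}--\eqref{eq4.19} an affine function $L$ that approximates $u$, and to take its zero set $P$ as the candidate hyperplane of Definition \ref{def2.13}. Set
$$
\vec\beta\,:=\,\nabla u(Y_Q),\qquad L(Y)\,:=\,\vec\beta\cdot(Y-Y_Q)+u(Y_Q),\qquad w\,:=\,u-L,
$$
so that $|\vec\beta|\approx 1$, $w(Y_Q)=0$ and $\nabla w(Y_Q)=0$, and $w$ is harmonic wherever $u$ is; put $P:=\{L=0\}$ and $H:=\{L<0\}$. By Definition \ref{def2.11a}, any $Y\in \widetilde U^i_Q$ is linked to $Y_Q$ by a chain of at most $\varepsilon^{-1}$ balls of radius $\in[\varepsilon^{3}\ell(Q),\varepsilon^{-3}\ell(Q)]$; integrating $|\nabla w|\leq\varepsilon^{2M}$ along this chain yields $|w|\leq C_\varepsilon\,\varepsilon^{2M}\ell(Q)$ on $\widetilde U^i_Q$. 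On the other hand, Harnack's inequality, \eqref{eqn:right-CFMS:cubes}, and \eqref{eq4.17a} force $u(Y_Q)\approx\delta(Y_Q)\approx K_0^{-1}\ell(Q)$, so $\dist(Y_Q,P)=u(Y_Q)/|\vec\beta|\lesssim K_0^{-1}\ell(Q)$; combined with $|Y_Q-x_Q|\lesssim\ell(Q)$ from Remark \ref{remark:X0-YQ}, this already establishes clause (2) of Definition \ref{def2.13}, namely $\dist(Q,P)\lesssim\ell(Q)\ll K_0^{3/2}\ell(Q)$.

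The core technical step would be to extend the approximation $u\approx L$ from the enlarged Whitney region (radius $\lesssim\varepsilon^{-4}\ell(Q)$) to all of $\Omega\cap B_Q^{**}(\varepsilon)$ (radius $\varepsilon^{-2}\ell(Q)$). I would exploit the global upper bound \eqref{eq4.18} on $u$ together with the boundary H\"older estimate of Corollary \ref{cor2.12} applied in $\Omega\cap\bqo$, which yields
$$
u(Y)\,\lesssim\,\bigl(\delta(Y)\,\varepsilon^{5}/\ell(Q)\bigr)^{\alpha}\,\varepsilon^{-5}\ell(Q),\qquad Y\in B_Q^{**}(\varepsilon)\cap\Omega,
$$
so that $u$ is H\"older-small near $E$. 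Since $|\nabla L|\lesssim 1$ gives a trivial analogous bound for $L$ on $E\cap B_Q^{**}$, the difference $w=u-L$ on $\partial(\Omega\cap B_Q^{**})$ is controlled by $\varepsilon^{M''}\ell(Q)$ for some $M''=M''(M)\to\infty$, and the maximum principle applied to the harmonic function $w$ propagates this to all of $\Omega\cap B_Q^{**}$. Standard interior gradient bounds then upgrade $|w|\leq\varepsilon^{M''}\ell(Q)$ to $|\nabla u-\vec\beta|\ll 1$ at every point with $\delta\gtrsim\varepsilon\ell(Q)$.

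Granted the propagation, clauses (1) and (3) of Definition \ref{def2.13} follow. For (3): if $y\in E\cap B_Q^{**}$ then $u(y)=0$ forces $L(y)=O(\varepsilon^{M''}\ell(Q))$, and a parallel shift of $P$ by $\varepsilon^{M''}\ell(Q)$ toward $\{L<0\}$ ensures $E\cap B_Q^{**}\subset H^c$. For (1): if $Z\in P\cap B_Q^{**}$ satisfied $\dist(Z,E)>\varepsilon\ell(Q)$, consider the segment from $Y_Q$ to $Z_-:=Z-\varepsilon\ell(Q)\,\vec\beta/|\vec\beta|$, along which $L$ decreases linearly from $u(Y_Q)>0$ to $-\varepsilon\ell(Q)|\vec\beta|<0$. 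Either the segment stays in $\Omega$, in which case $u(Z_-)\geq 0$ together with $|u-L|\leq\varepsilon^{M''}\ell(Q)$ gives $\varepsilon/2\leq\varepsilon^{M''}$, impossible for $M$ large; or it first exits $\Omega$ at some $y_0\in E$ with $L(y_0)=-w(y_0)=O(\varepsilon^{M''}\ell(Q))$, in which case the linearity of $L$ places $y_0$ within $O(\varepsilon^{M''-1}\ell(Q))$ of $P$ along the segment, hence within $O(\varepsilon\ell(Q))$ of $Z$, contradicting the standing assumption. The main obstacle is the second paragraph's propagation step: in the absence of Harnack chains in $\Omega$ and of doubling for harmonic measure, one must rely entirely on the scale gap between $\varepsilon^{-2}\ell(Q)$ and $\varepsilon^{-5}\ell(Q)$ (and the buffer up to $\varepsilon^{-8}\ell(Q)$ in the definition of $\bqo$) to produce the H\"older gain $\varepsilon^{3\alpha}$ required to beat the linear growth of $L$ in Corollary \ref{cor2.12}.
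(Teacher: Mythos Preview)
Your overall strategy---approximate $u$ by the affine function $L(Y)=\vec\beta\cdot(Y-Y_Q)+u(Y_Q)$ and take $P=\{L=0\}$---is the right idea, and your verification of clause (2) and the chain argument giving $|w|\leq C_\eps\,\eps^{2M}\ell(Q)$ on $\tU_Q^i$ are essentially what the paper does.  The genuine gap is in your propagation step (second paragraph): the maximum principle for $w=u-L$ on $\Omega\cap B_Q^{**}$ cannot work, because you have no control of $w$ on the relevant boundary.  On $E\cap B_Q^{**}$ you have $u=0$, hence $|w|=|L|$, and $L$ is an affine function with $|\nabla L|\approx 1$ on a ball of radius $\eps^{-2}\ell(Q)$; a priori $|L|$ there can be as large as $\eps^{-2}\ell(Q)$.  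Saying ``$|\nabla L|\lesssim 1$ gives a trivial analogous bound for $L$ on $E\cap B_Q^{**}$'' is circular: showing $L$ is small on $E$ near $Q$ (equivalently, $E$ is close to $P$) is precisely the conclusion you are after.  The same problem arises on $\partial B_Q^{**}\cap\Omega$, where $|L|$ and $u$ can both be of order $\eps^{-2}\ell(Q)$ or larger.  Consequently your arguments for clauses (1) and (3), which rely on the propagated bound $|u-L|\leq\eps^{M''}\ell(Q)$ at arbitrary points of $\Omega\cap B_Q^{**}$ and at arbitrary points of $E\cap B_Q^{**}$, collapse.

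The paper avoids the maximum principle entirely.  After normalizing so that $Y_Q=\delta(Y_Q)e_{n+1}$ and $0\in E$, it first shows (using only \eqref{eq4.19} inside $\tU_Q^i$) that $\nabla u(Y_Q)$ is nearly $|\nabla u(Y_Q)|\,e_{n+1}$, and then that every $w\in E\cap\partial\tB_X$ with $X\in\tU_Q^i$ satisfies $|w_{n+1}|\lesssim\eps^{M/2}\ell(Q)$.  The key new step is a \emph{bootstrap} (Claim \ref{claim5.40}): this last estimate lets you push Harnack chains from $Y_Q$ vertically up the $e_{n+1}$-axis and then horizontally, staying always above $E$, so the entire region $\mathcal O=B(x_Q,2\eps^{-2}\ell(Q))\cap\{W_{n+1}>\eps^2\ell(Q)\}$ lies in $\tU_Q^i$.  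Since $\tU_Q^i\subset\Omega$, this gives $H\cap B_Q^{**}\cap E=\emptyset$ directly, and the approximation $|u-L|\lesssim\eps^{M-7}\ell(Q)$ now holds throughout $\mathcal O$ (because $\mathcal O\subset\tU_Q^i$, not by any global argument), which yields clause (1) by the contradiction argument in Claim \ref{claim5.42}.  In short, the missing idea is that one never propagates $u\approx L$ to all of $\Omega\cap B_Q^{**}$; one instead proves that the half-space side of $P$ is already inside the enlarged Whitney region where the approximation was obtained.
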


Let us recall that in the scenario of Case 2,
$Q\in \dd_{\F,Q_0}$, $\ell(Q)\le \eps^{10}\,\ell(Q_0)$, and \eqref{eq5.2} holds.   Then 
\eqref{eq4.17a} holds by virtue of \eqref{eq6.9*}, while \eqref{eq4.18} holds
by Lemma \ref{lemma:G-aver} applied with $B=2B_Q^{***}$, and \eqref{eq4.9}.
Moreover, \eqref{eq4.19} is merely
a restatement of \eqref{eq5.2}.  Thus, the hypotheses of Lemma \ref{LVlemma} are all verified
for the Case 2 cubes, so modulo the proof of Proposition \ref{prop2.20}, it remains only to prove the lemma.

\begin{proof}[Proof of Lemma \ref{LVlemma}]
Our approach here will follow that of \cite{LV} very closely, but with some modifications owing to the
fact that in contrast to the situation in \cite{LV}, our solution $u$ need not be Lipschitz,
and our harmonic measures need not be doubling (it is the latter obstacle that has forced us to
introduce the WHSA condition, rather than to work with the ``Weak Exterior Convexity" condition
used in \cite{LV}).  In fact, Lemma \ref{LVlemma} is essentially a distillation of the main argument
of the corresponding part of \cite{LV}, but with the doubling hypothesis removed.

For convenience,
in the proof of the lemma, we shall
use the notational convention that implicit and generic constants are allowed to depend upon
$K_0$, but not on $\eps$ or $M$. 
Dependence on the latter will be noted explicitly.

We begin with the following.  We remind the reader that the balls $B_Y$ and $\tB_Y$ are defined in \eqref{eq5.1}.
\begin{lemma}\label{l5.14}  Let $Y\in U^i_Q$, $X\in \tU^i_Q$.  Suppose first that $w\in \partial\tB_Y\cap\pom$, and
let $W$ be the radial
projection of $w$ onto $\partial B_Y$.  Then
\begin{equation}\label{eq5.15a}
u(W)\, \lesssim \, \eps^{2M-5} \delta(Y)\,.
\end{equation}
If $w\in \partial\tB_X\cap\pom$, and $W$ now is the radial
projection of $w$ onto $\partial B_X$, then
\begin{equation}\label{eq5.16a}
u(W)
\lesssim
\eps^{2M-5} \ell(Q)\,.
\end{equation}
\end{lemma}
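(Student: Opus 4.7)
The plan is to read both assertions as statements that $u(W)$ is tiny because $W$ sits within distance $\eps^{2M/\alpha}\delta(Y)$ (resp.\ $\eps^{2M/\alpha}\delta(X)$) of the boundary, and to extract this smallness from the boundary H\"older estimate Corollary \ref{cor2.12}, using \eqref{eq4.18} to control the bulk average. Note that by the definition of the radial projection, if $w\in\partial\tB_Y\cap\pom$ and $W\in\partial B_Y$, then $W$ lies on the segment $[Y,w]$ with $|W-w|=\eps^{2M/\alpha}\delta(Y)$, and analogously in the other case with $X$ replacing $Y$; in particular $\delta(W)\le \eps^{2M/\alpha}\delta(Y)$ (resp.\ $\eps^{2M/\alpha}\delta(X)$).

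First I would check that the surface ball at $w$ that we are about to use sits comfortably inside $B_Q^{***}$. For $Y\in U_Q^i$ we have $\delta(Y)\approx\ell(Q)$ and $|Y-x_Q|\lesssim \ell(Q)$, so $|w-x_Q|\lesssim \ell(Q)$; for $X\in \tU_Q^i$, unwinding Definition \ref{def2.11a} gives $\delta(X)\lesssim \eps^{-3}\ell(Q)$ and $|X-x_Q|\lesssim \eps^{-4}\ell(Q)$, so $|w-x_Q|\lesssim \eps^{-4}\ell(Q)$. In either case, choosing $r\approx\delta(Y)$ (resp.\ $r\approx\delta(X)$) ensures $B(w,2r)\subset B_Q^{***}$, using $\eps\ll K_0^{-6}$.

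Since $u\ge 0$ is harmonic in $\om_Q$ and vanishes on $\Delta_Q^{***}$, Corollary \ref{cor2.12} applied at the boundary point $w$ with radius $r$ yields
\begin{equation*}
u(W)\,\le\, C\left(\frac{\delta(W)}{r}\right)^{\!\alpha}\frac{1}{|B(w,2r)|}\iint_{B(w,2r)\cap\om}u\,.
\end{equation*}
The prefactor is at most $(\eps^{2M/\alpha})^{\alpha}=\eps^{2M}$, and by \eqref{eq4.18} the average on the right is $\lesssim \eps^{-5}\ell(Q)$, so $u(W)\lesssim \eps^{2M-5}\ell(Q)$. For the first statement this is equivalent to $u(W)\lesssim \eps^{2M-5}\delta(Y)$ since $\delta(Y)\approx\ell(Q)$ for $Y\in U_Q^i$, and for the second statement it is the claimed bound verbatim.

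I do not expect serious obstacles: hypothesis \eqref{eq4.19} is not invoked in this lemma (it will be used in the subsequent step of the Lewis--Vogel scheme, where the near-constancy of $\nabla u$ is exploited to build the approximating hyperplane $P=P(Q)$), and the only delicate point here is the bookkeeping that $B(w,2r)\subset B_Q^{***}$ for the enlarged Whitney region $\tU_Q^i$, which is routine once one tracks the powers of $\eps$ arising from Definition \ref{def2.11a}.
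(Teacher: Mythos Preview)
Your proof is correct and follows essentially the same approach as the paper: both apply the boundary H\"older estimate Corollary \ref{cor2.12} together with the sup bound \eqref{eq4.18}, using that $\delta(W)\le|W-w|=\eps^{2M/\alpha}\delta(X)$. The only cosmetic difference is that the paper applies Corollary \ref{cor2.12} once with the large ball $B_Q^{***}$ centered at $x_Q$ (obtaining a slightly sharper prefactor $\eps^{2M+2\alpha}$), whereas you localize to $B(w,2r)$ with $r\approx\delta(X)$; both choices yield the claimed $\eps^{2M-5}\ell(Q)$ bound.
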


\begin{proof}  Since $K_0^{-1}\ell(Q)\lesssim \delta(Y)\lesssim K_0\, \ell(Q)$ for $Y\in U_Q^i$, it is enough to prove
\eqref{eq5.16a}.

To prove \eqref{eq5.16a}, we first
note that 
$$|W-w|\, =\, \eps^{2M/\alpha}\delta(X)\,\lesssim\, \eps^{2M/\alpha}\eps^{-3}\ell(Q)\,,$$ 
by definition of $B_X,\tB_X$ and the fact that by construction of $\tU^i_Q$,
\begin{equation}\label{eq5.27a}
\eps^3\ell(Q)\lesssim \delta(X)
\lesssim \eps^{-3}\ell(Q)\,,\quad \forall \, X\in\tU_Q^i\,.
\end{equation}
In addition,
\begin{equation}\label{eq5.29a}
\diam(\tU_Q^i)\lesssim \eps^{-4}\ell(Q)\,,
\end{equation}
again by construction of $\tU_Q^i$.  Consequently, $W\in (1/2) B_Q^{***}=B\big(x_Q, (1/2 \eps^{-5}\ell(Q)\big)$, 
so by Corollary \ref{cor2.12} and \eqref{eq4.18},
\begin{equation*}
u(W)\, \lesssim \,\left(\frac{\eps^{2M/\alpha}\eps^{-3}\ell(Q)}{\eps^{-5}\ell(Q)}\right)^\alpha \,
\frac{1}{|B_Q^{***}|} \int\!\!\!\int_{B_Q^{***}}\, u\,\lesssim \,
\eps^{2M+2\alpha -5}\ell(Q)\,\leq \eps^{2M -5}\ell(Q)\,.
\end{equation*}
\end{proof}

\begin{claim}\label{claim6.14} Let $Y\in U^i_Q$.
For all $W\in  B_Y$ (see \eqref{eq5.1}),
\begin{equation}\label{eq6.15}
|u(W) -u(Y) -\nabla u(Y) \cdot(W-Y)|\lesssim \eps^{2M} \delta(Y)\,.
\end{equation}
\end{claim}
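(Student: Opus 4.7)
The plan is to derive Claim \ref{claim6.14} as an immediate consequence of the hypothesis \eqref{eq4.19}, via the fundamental theorem of calculus along the segment from $Y$ to $W$. Since $W \in B_Y$ and $B_Y$ is a closed ball centered at $Y$, the segment $\{Y + t(W-Y) : t \in [0,1]\}$ is entirely contained in $B_Y$. Writing
\[
u(W) - u(Y) - \nabla u(Y)\cdot (W-Y) \;=\; \int_0^1 \bigl[\nabla u(Y + t(W-Y)) - \nabla u(Y)\bigr]\cdot (W-Y)\, dt,
\]
reduces matters to bounding the integrand.

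For this, I would observe that $Y \in U^i_Q \subset \widetilde{U}^i_Q$, so specializing \eqref{eq4.19} to the case $X = Y$ (which is legal since $Y$ lies in $\widetilde{U}^i_Q$) yields the uniform bound $|\nabla u(Z_1) - \nabla u(Z_2)| \le \varepsilon^{2M}$ for all $Z_1, Z_2 \in B_Y$. Applying this with $Z_1 = Y + t(W-Y)$ and $Z_2 = Y$, together with the obvious estimate $|W-Y| \le (1-\varepsilon^{2M/\alpha})\delta(Y) \le \delta(Y)$ coming from the definition \eqref{eq5.1} of $B_Y$, the integrand is bounded pointwise by $\varepsilon^{2M}\delta(Y)$, and integrating in $t$ gives the claim.

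There is essentially no obstacle here: Claim \ref{claim6.14} is a direct translation of the near-constancy of $\nabla u$ on $B_Y$ (encoded in \eqref{eq4.19}) into first-order Taylor information on $u$. The only small point to keep track of is that the hypothesis \eqref{eq4.19} is phrased over $\widetilde{U}^i_Q$ rather than $U^i_Q$, but this causes no difficulty because $U^i_Q \subset \widetilde{U}^i_Q$ by the construction in Definition \ref{def2.11a}, so taking both parameters to equal the given $Y$ is permissible.
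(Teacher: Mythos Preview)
Your proof is correct and follows essentially the same approach as the paper. The paper applies the mean value theorem along the segment to write $u(W)-u(Y)=\nabla u(\widetilde{W})\cdot(W-Y)$ for a single intermediate point $\widetilde{W}\in B_Y$, and then invokes \eqref{eq4.19} with $X=Y$, $Z_1=\widetilde{W}$, $Z_2=Y$; your integral form of the remainder is an equivalent variant of the same argument.
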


\begin{proof}[Proof of Claim \ref{claim6.14}]
Let $W\in B_Y$.  Then
$$
u(W) -u(Y)
=
\nabla u (\widetilde{W})\cdot (W-Y)\,,
$$
for some $\widetilde{W}\in B_Y$.  We may then invoke \eqref{eq4.19}, with $X=Y$, $Z_1 = \widetilde{W}$, and $Z_2 = Y$,
to obtain \eqref{eq6.15}.
\end{proof}

\begin{claim}\label{claim6.16}  Let $Y\in U^i_Q$.  Suppose that  $w\in \partial\tB_Y\cap\pom$. 
Then
\begin{equation}\label{eq6.17}
|u(Y) -\nabla u(Y) \cdot(Y-w)|=|u(w)-u(Y) -\nabla u(Y) \cdot(w-Y)|
\lesssim
\eps^{2M-5} \delta(Y)\,.
\end{equation}
\end{claim}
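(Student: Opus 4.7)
The plan is to deduce Claim \ref{claim6.16} by combining Claim \ref{claim6.14} (a first-order Taylor bound valid on the inner ball $B_Y$) with the boundary decay estimate of Lemma \ref{l5.14}, using the radial projection $W$ of $w$ onto $\partial B_Y$ as the intermediate bridge between $w\in\partial\tB_Y$ and the region where the Taylor bound applies.

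First I would dispose of the equality between the two expressions in the statement. Since $Y\in U_Q^i$, we have $\delta(Y)\approx \ell(Q)$ with constants depending only on $K_0$, hence
$$|w-x_Q|\leq |w-Y|+|Y-x_Q|=\delta(Y)+|Y-x_Q|\lesssim K_0\,\ell(Q)\ll \eps^{-8}\ell(Q),$$
because $\eps<K_0^{-6}$. Thus $w\in \dqo$, so $u(w)=0$ by the hypothesis of Lemma \ref{LVlemma}, and the two absolute values in \eqref{eq6.17} coincide. The task reduces to estimating the Taylor remainder $R:=u(w)-u(Y)-\nabla u(Y)\cdot(w-Y)$.

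Next I would insert $W$, the radial projection of $w$ onto $\partial B_Y$. Recalling \eqref{eq5.1}, $W-Y=(1-\eps^{2M/\alpha})(w-Y)$, so $W-w=-\eps^{2M/\alpha}(w-Y)$, and hence
$$R=\bigl[u(w)-u(W)\bigr]+\bigl[u(W)-u(Y)-\nabla u(Y)\cdot(W-Y)\bigr]-\eps^{2M/\alpha}\,\nabla u(Y)\cdot(w-Y).$$
The first bracket equals $-u(W)$, which Lemma \ref{l5.14} bounds by $\eps^{2M-5}\delta(Y)$. The second bracket is controlled by Claim \ref{claim6.14} applied to the point $W\in\partial B_Y\subset B_Y$, giving at most $\eps^{2M}\delta(Y)$. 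For the last term I would first note $|\nabla u(Y)|\lesssim 1$: since $Y,Y_Q\in U_Q^i\subset \tU_Q^i$, hypothesis \eqref{eq4.19} applied with $Z_1=Y\in B_Y$ and $Z_2=Y_Q\in B_{Y_Q}$ yields $|\nabla u(Y)-\nabla u(Y_Q)|\le\eps^{2M}$, which combined with \eqref{eq4.17a} gives the claim. Then, since $|w-Y|=\delta(Y)$ and $\alpha\in(0,1]$, the third term is bounded by $\eps^{2M/\alpha}\delta(Y)\le \eps^{2M}\delta(Y)$. Summing the three estimates yields $|R|\lesssim \eps^{2M-5}\delta(Y)$, as required.

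There is no real obstacle; the argument is a short linear combination of Claim \ref{claim6.14} and Lemma \ref{l5.14}. The small technical points to watch are (i) the verification that $w\in\dqo$ so that $u(w)=0$ (this uses $\eps\ll K_0^{-6}$), (ii) the passage from $\eps^{2M/\alpha}$ to $\eps^{2M}$ via $\alpha\le 1$, and (iii) transferring the normalization $|\nabla u(Y_Q)|\approx 1$ to $|\nabla u(Y)|\lesssim 1$ using the near-constancy \eqref{eq4.19}. Once these are in place the decomposition above gives the bound directly.
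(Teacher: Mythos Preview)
Your proof is correct and follows essentially the same approach as the paper's: introduce the radial projection $W$ of $w$ onto $\partial B_Y$, apply Lemma \ref{l5.14} to bound $u(W)=|u(W)-u(w)|$, and apply Claim \ref{claim6.14} to bound the Taylor remainder at $W$. The only difference is that you are more explicit: the paper's proof is terse and does not separately display the cross term $\nabla u(Y)\cdot(W-w)$, whereas you bound it directly via $|\nabla u(Y)|\lesssim 1$ (transferred from $Y_Q$ using \eqref{eq4.19}) and $|W-w|=\eps^{2M/\alpha}\delta(Y)$, which is a welcome clarification.
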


\begin{proof}[Proof of Claim \ref{claim6.16}]
Given $w\in \tB_Y\cap\pom$, let $W$ be the radial
projection of $w$ onto $\partial B_Y$, so that $|W-w| = \eps^{2M/\alpha}\delta(Y)$.
Since $u(w)=0$, by \eqref{eq5.15a}
we have
$$
|u(W)-u(w)| = u(W)
\lesssim
\eps^{2M-5} \delta(Y).
$$
Since \eqref{eq6.15} holds
for $W$, we obtain \eqref{eq6.17}.
\end{proof}

To simplify notation, let us now set $Y:=Y_Q$, the point in $U_Q^i$ satisfying
\eqref{eq4.17a}. 
By \eqref{eq4.17a} and  \eqref{eq4.19}, for $\eps<1/2$, and $M$ chosen large enough, we
have that
\begin{equation}\label{eq6.5aa}
|\nabla u(Z)| \approx 1\,,\qquad \forall\, Z\in \tU^i_Q\,.
\end{equation}
By translation and rotation, we may assume that $0\in \tB_Y\cap\pom$, and that
$Y =\delta(Y) e_{n+1}$, where as usual $e_{n+1}:=(0,\dots,0,1)$.

\begin{claim}\label{claim6.19}  We claim that
\begin{equation}\label{eq6.20}
\big|\, \langle \nabla u(Y),e_{n+1}\rangle -|\nabla u(Y)|\,\big|\lesssim \eps^{2M-5}\,.
\end{equation}
\end{claim}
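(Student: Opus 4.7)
The plan is to extract the claim from two complementary estimates on $\nabla u(Y)$: an approximate identification of the $e_{n+1}$-component of $\nabla u(Y)$ with the ratio $u(Y)/\delta(Y)$, and a lower bound of this ratio by $|\nabla u(Y)|$, both up to an error of order $\eps^{2M-5}$. The upper bound $\langle \nabla u(Y),e_{n+1}\rangle\leq |\nabla u(Y)|$ is of course trivial by Cauchy--Schwarz, so only a matching lower bound is needed.

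For the first ingredient, I would apply Claim \ref{claim6.16} with $w=0$, which by the standing normalization belongs to $\partial \tB_Y\cap\pom$. Since $Y-w=Y=\delta(Y) e_{n+1}$, the inequality \eqref{eq6.17} immediately gives
\begin{equation*}
\big|\,u(Y)-\delta(Y)\langle \nabla u(Y), e_{n+1}\rangle\,\big|\,\lesssim\, \eps^{2M-5}\,\delta(Y),
\end{equation*}
so that $\langle \nabla u(Y), e_{n+1}\rangle = u(Y)/\delta(Y)+O(\eps^{2M-5})$.

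For the second ingredient, I would exploit the non-negativity of $u$ by probing in the direction opposite to $\nabla u(Y)$. Set $v := -\nabla u(Y)/|\nabla u(Y)|$ (well defined thanks to \eqref{eq4.17a}) and
\begin{equation*}
W_1\,:=\, Y+(1-\eps^{2M/\alpha})\delta(Y)\, v\,\in\,\partial B_Y\,\subset\, B(Y,\delta(Y))\,\subset\,\Omega.
\end{equation*}
Applying Claim \ref{claim6.14} at $W_1$, and using $\nabla u(Y)\cdot (W_1-Y)=-(1-\eps^{2M/\alpha})\delta(Y)|\nabla u(Y)|$, gives
\begin{equation*}
u(W_1)\,=\,u(Y)-(1-\eps^{2M/\alpha})\delta(Y)|\nabla u(Y)|+O\!\left(\eps^{2M}\delta(Y)\right).
\end{equation*}
Since $W_1\in\Omega$ and $u\geq 0$ in $\Omega$ (recall $u$ is a positive multiple of a Green function), the left-hand side is non-negative, so
\begin{equation*}
\frac{u(Y)}{\delta(Y)}\,\geq\,(1-\eps^{2M/\alpha})|\nabla u(Y)|-C\eps^{2M}\,\geq\,|\nabla u(Y)|-C'\eps^{2M/\alpha},
\end{equation*}
where I have used \eqref{eq4.17a} to absorb $\eps^{2M/\alpha}|\nabla u(Y)|$ into the last error term.

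Combining the two ingredients with Cauchy--Schwarz yields
\begin{equation*}
0\,\leq\,|\nabla u(Y)|-\langle\nabla u(Y),e_{n+1}\rangle\,\lesssim\, \eps^{2M-5}+\eps^{2M/\alpha}\,\lesssim\,\eps^{2M-5},
\end{equation*}
since $\alpha\in(0,1]$ forces $2M/\alpha\geq 2M>2M-5$. No step is really an obstacle here; the only mild subtlety is the bookkeeping of the competing error exponents $\eps^{2M}$ (from Claim \ref{claim6.14}), $\eps^{2M-5}$ (from the boundary decay in Lemma \ref{l5.14} and Claim \ref{claim6.16}), and $\eps^{2M/\alpha}$ (from the radial scale of $B_Y$ inside $\tB_Y$), which is why $M$ must be chosen sufficiently large compared to $1/\alpha$ for the conclusion to be non-trivial.
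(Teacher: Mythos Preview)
Your proof is correct and follows essentially the same approach as the paper: both arguments apply Claim \ref{claim6.16} at $w=0$ to identify $\langle \nabla u(Y),e_{n+1}\rangle$ with $u(Y)/\delta(Y)$ up to $O(\eps^{2M-5})$, then test $u\geq 0$ at the point of $\partial B_Y$ in the direction $-\nabla u(Y)/|\nabla u(Y)|$ via Claim \ref{claim6.14} to obtain the matching lower bound. The only cosmetic difference is that the paper first merges \eqref{eq6.15} and \eqref{eq6.17} into the single estimate \eqref{eq6.21} and then applies it at the chosen $W$, whereas you keep the two ingredients separate; the resulting error bookkeeping $\eps^{2M-5}+\eps^{2M/\alpha}\lesssim\eps^{2M-5}$ is identical.
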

\begin{proof}[Proof of Claim \ref{claim6.19}]
We apply \eqref{eq6.17}, with $w=0$, to obtain
$$
|u(Y) -\nabla u(Y)\cdot Y|
\lesssim
\eps^{2M-5}\delta(Y).
$$
Combining the latter bound with \eqref{eq6.15}, we find that
\begin{multline}\label{eq6.21}
|u(W)  -\nabla u(Y)\cdot W|\,=\,
|u(W) -\nabla u(Y)\cdot Y -\nabla u(Y)\cdot (W-Y)|
\\[4pt]\lesssim\, \eps^{2M-5}\delta(Y)\,,\quad \forall\, W\in B_Y\,.
\end{multline}
Fix $W\in \partial B_Y$ so that
\begin{equation*}
\nabla u(Y)\cdot\frac{W-Y}{|W-Y|} = -|\nabla u(Y)|\,.
\end{equation*}
Since $|W-Y|= (1-\eps^{2M/\alpha})\delta(Y)$, and since $u\geq 0$, we have
\begin{multline}\label{eq6.22}
0\leq |\nabla u(Y)|-\nabla u(Y)\cdot e_{n+1}
\leq |\nabla u(Y)|-\nabla u(Y)\cdot e_{n+1} +\frac{u(W)}{\delta(Y)}\\[4pt]
\leq \frac1{\delta(Y)}\left(-\nabla u(Y) \cdot \frac{(W-Y)}{1-\eps^{2M/\alpha}}\, -\,\nabla u(Y)\cdot Y \,+\,u(W) \right)
\\[4pt]
\lesssim
\left(\eps^{2M-5} +\eps^{2M/\alpha}\right)
\approx
\eps^{2M-5}\,,
\end{multline}
by \eqref{eq6.21} and \eqref{eq4.17a}.
\end{proof}

\begin{claim}\label{claim6.24}  Suppose that $M>5$.  Then
\begin{equation}\label{eq6.25}
\big|\, |\nabla u(Y)|e_{n+1}-\nabla u(Y)\,\big|
\lesssim
\eps^{M-3}\,.
\end{equation}
\end{claim}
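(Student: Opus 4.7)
The plan is to deduce Claim \ref{claim6.24} directly from Claim \ref{claim6.19} via an elementary vector identity, exploiting the fact that once a vector's projection onto a unit direction is close to its full norm, the vector itself must be close to a positive scalar multiple of that direction.

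Setting $v := \nabla u(Y)$, I would expand
\[
\bigl| v - |v|\, e_{n+1}\bigr|^2 \,=\, 2|v|^2 - 2|v|\,\langle v, e_{n+1}\rangle \,=\, 2|v|\bigl(|v| - \langle v, e_{n+1}\rangle\bigr).
\]
Claim \ref{claim6.19} supplies the scalar bound $\bigl|\,|v| - \langle v, e_{n+1}\rangle\bigr| \lesssim \eps^{2M-5}$, while \eqref{eq4.17a} (together with \eqref{eq6.9*} and \eqref{eq6.5aa} to control $|\nabla u|$ from above) ensures $|v| \approx 1$. Plugging these into the displayed identity yields $\bigl| v - |v|\, e_{n+1}\bigr|^2 \lesssim \eps^{2M-5}$, and taking square roots gives $\bigl| v - |v|\, e_{n+1}\bigr| \lesssim \eps^{M-5/2}$.

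Since $\eps < K_0^{-6} < 1$ and $M - 5/2 > M - 3$, this at once implies the asserted bound $\lesssim \eps^{M-3}$. The hypothesis $M > 5$ enters only to ensure that the scalar discrepancy coming from Claim \ref{claim6.19} is genuinely small (so that $2M-5 > M-3 > 0$ and the square-root estimate is non-vacuous); in particular, the extra $1/2$ of room in the exponent is not needed and can be absorbed into the stated $\eps^{M-3}$.

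I expect no genuine obstacle at this step: Claim \ref{claim6.24} is a purely algebraic upgrade of the scalar estimate of Claim \ref{claim6.19} to a vectorial one, made possible by the lower bound on $|\nabla u(Y)|$. Its role in the subsequent argument should be to record that, in the rotated coordinates set up so that $0 \in \partial \widetilde{B}_Y \cap \pom$ and $Y = \delta(Y)\, e_{n+1}$, the vector $\nabla u(Y)$ points essentially in the $e_{n+1}$ direction up to an error of size $\eps^{M-3}$. This near-alignment of the gradient with the vertical axis is presumably the key input in the next step, where one uses the harmonicity of $u$ and the boundary behavior to identify the half-space $H$ and hyperplane $P = \partial H$ demanded by Definition \ref{def2.13}, thereby verifying the $\eps$-local WHSA for $Q$.
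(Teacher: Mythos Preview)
Your proof is correct and follows essentially the same approach as the paper: both use Claim \ref{claim6.19} together with the bound $|\nabla u(Y)|\approx 1$ to upgrade the scalar estimate $|v|-\langle v,e_{n+1}\rangle\lesssim \eps^{2M-5}$ to the vectorial one, arriving at the intermediate bound $\eps^{M-5/2}$ before relaxing to $\eps^{M-3}$. The only cosmetic difference is that the paper decomposes $\nabla u(Y)-|\nabla u(Y)|e_{n+1}$ into its $e_{n+1}$-component and its orthogonal part $\nabla_\| u(Y)$, bounding the latter via the factorization $|\nabla u|^2-(\langle\nabla u,e_{n+1}\rangle)^2=(|\nabla u|-\langle\nabla u,e_{n+1}\rangle)(|\nabla u|+\langle\nabla u,e_{n+1}\rangle)$, whereas you use the single identity $|v-|v|e_{n+1}|^2=2|v|\bigl(|v|-\langle v,e_{n+1}\rangle\bigr)$; the two computations are algebraically equivalent.
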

\begin{proof}[Proof of Claim \ref{claim6.24}]
Note that
$$
\big|\, |\nabla u(Y)|e_{n+1}-\langle \nabla u(Y),e_{n+1}\rangle e_{n+1}\,\big|\lesssim \,
\eps^{2M-5}\,,
$$
by Claim \ref{claim6.19}.  Therefore,
it is enough to consider
$\nabla_\| u:= \nabla u - \langle\nabla u,e_{n+1}\rangle e_{n+1}$.
Observe that
\begin{multline*}
|\nabla_\| u(Y)|^2
=
|\nabla u(Y)|^2 -\big(\langle\nabla u(Y),e_{n+1}\rangle\big)^2
\\[4pt]
=
\big(|\nabla u(Y)|-\langle\nabla u(Y),e_{n+1}\rangle\big)\,
\big(|\nabla u(Y)|+\langle\nabla u(Y),e_{n+1}\rangle\big)
\lesssim
\eps^{2M-5}\,,
\end{multline*}
by \eqref{eq6.20} and \eqref{eq4.17a}.
\end{proof}

Now for $Y=\delta(Y)e_{n+1}\in U_{Q}^i$ fixed as above, we consider another point
$X\in\tU_Q^i$.  
We form a polygonal path in $\tU^i_Q$, joining $Y$ to $X$, with vertices
$$Y_0:=Y, Y_1,Y_2,\dots,Y_N:= X\,,$$
such that 
$Y_{k+1}\in B_{Y_k}\cap B(Y_k,\ell(Q)),\, 0\leq k\leq N-1$, and such that
the distance between
consecutive vertices is comparable to $\ell(Q)$, and therefore the
total length of the path is on the order of $N\ell(Q)$.  Let us note that we may take
$N\lesssim \eps^{-4}$,  by \eqref{eq5.29a}.
We further note that by \eqref{eq4.19} and \eqref{eq6.25},
\begin{multline}\label{eq6.26}
\big|\nabla u(W) -|\nabla u(Y)|e_{n+1}\big|
\\[4pt]
\leq \,|\nabla u(W)-\nabla u(Y)| + \big|\nabla u(Y) -|\nabla u(Y)|e_{n+1}\big|
\\[4pt]
\lesssim
\eps^{2M}+\eps^{M-3}
\lesssim
\eps^{M-3}\,,\quad \forall\,W\in B_Z\,,\,\,\forall Z\in \tU^i_Q\,.
\end{multline}

\begin{claim}\label{claim6.27} Assume $M>7$.  Then for each $k=1,2,\dots,N$,
\begin{equation}\label{eq6.28}
\big|u(Y_{k})-|\nabla u(Y)|\langle Y_k,e_{n+1}\rangle\big| \,\lesssim\, k\,\eps^{M-3}\ell(Q)\,.
\end{equation}
Moreover,
\begin{equation}\label{eq6.28a}
\big|u(W)-|\nabla u(Y)|W_{n+1}\big| \,\lesssim\,\eps^{M-7}\ell(Q)\,,\quad \forall \, W\in B_{X}\,,\, \forall \,X\in\tU^i_Q\,.
\end{equation}
\end{claim}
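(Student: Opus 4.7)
The plan is to prove \eqref{eq6.28} by induction on $k$, and then deduce \eqref{eq6.28a} by taking $k=N$ (so $Y_N=X$) and passing from $X$ to $W\in B_X$ by a single extra application of the gradient-oscillation bound \eqref{eq6.26}. The guiding principle is that along the polygonal path $u$ behaves, to leading order, like the linear function $|\nabla u(Y)|\langle\,\cdot\,,e_{n+1}\rangle$, and that each step of the path contributes an additive error of order $\eps^{M-3}\ell(Q)$.

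For the base case $k=0$, I would combine Claim \ref{claim6.16} at $w=0\in\partial\tB_Y\cap\pom$, which gives $|u(Y)-\nabla u(Y)\cdot Y|\lesssim\eps^{2M-5}\delta(Y)$, with Claim \ref{claim6.24}, which yields $|\nabla u(Y)-|\nabla u(Y)|e_{n+1}|\lesssim \eps^{M-3}$. Since $|Y|=\delta(Y)\lesssim\ell(Q)$, this produces $|u(Y_0)-|\nabla u(Y)|\langle Y_0,e_{n+1}\rangle|\lesssim \eps^{M-3}\ell(Q)$, as required. For the inductive step, assuming \eqref{eq6.28} at $Y_k$, I would apply the mean value theorem on the segment from $Y_k$ to $Y_{k+1}\in B_{Y_k}$ to write $u(Y_{k+1})-u(Y_k)=\nabla u(\widetilde Y)\cdot(Y_{k+1}-Y_k)$ for some $\widetilde Y$ on the segment (hence in $B_{Y_k}$), and then use \eqref{eq6.26} (with the role of $Z$ played by $Y_k\in\tU^i_Q$ and the role of $W$ by $\widetilde Y\in B_{Y_k}$) to replace $\nabla u(\widetilde Y)$ by $|\nabla u(Y)|e_{n+1}$ up to an error $\lesssim\eps^{M-3}|Y_{k+1}-Y_k|\lesssim\eps^{M-3}\ell(Q)$. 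Adding this one-step estimate to the inductive hypothesis at $Y_k$ delivers \eqref{eq6.28} at $Y_{k+1}$.

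To deduce \eqref{eq6.28a}, I would first take $k=N\lesssim\eps^{-4}$ in \eqref{eq6.28} (the bound on $N$ coming from \eqref{eq5.29a} together with the fact that consecutive vertices are separated by $\approx \ell(Q)$), giving $|u(X)-|\nabla u(Y)|\langle X,e_{n+1}\rangle|\lesssim \eps^{-4}\eps^{M-3}\ell(Q)=\eps^{M-7}\ell(Q)$. Then for $W\in B_X$, the same mean-value-plus-\eqref{eq6.26} argument applied at $X\in\tU^i_Q$, now with step of size $|W-X|\lesssim\delta(X)\lesssim\eps^{-3}\ell(Q)$, yields $|u(W)-u(X)-|\nabla u(Y)|\langle W-X,e_{n+1}\rangle|\lesssim\eps^{M-6}\ell(Q)$, and adding the two estimates produces \eqref{eq6.28a}. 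The step I expect to demand the most care is the per-step accounting in the induction: one must pair the gradient-oscillation bound $\eps^{M-3}$ from Claim \ref{claim6.24} with the step length $\approx\ell(Q)$, and then absorb the loss coming from the $N\lesssim\eps^{-4}$ iterations. This is precisely the source of the degradation from $M-3$ to $M-7$, and explains the hypothesis $M>7$ in the statement.
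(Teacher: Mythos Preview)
Your proposal is correct and follows essentially the same approach as the paper. The only organizational difference is that the paper carries out the inductive step directly for an arbitrary $W\in B_{Y_k}$ (obtaining \eqref{eq6.30}), so that both \eqref{eq6.28} (taking $W=Y_{k+1}$) and \eqref{eq6.28a} (taking $k=N$) fall out simultaneously, whereas you run the induction only along the vertices and then add one extra step at the end to pass from $X=Y_N$ to $W\in B_X$; the two arguments are equivalent.
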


\begin{proof}[Proof of Claim \ref{claim6.27}]
By \eqref{eq6.21} and \eqref{eq6.25}, we have
\begin{multline}\label{eq6.29}
\big| u(W) -|\nabla u(Y)|W_{n+1}\big|\,\lesssim\,|u(W)  -\nabla u(Y)\cdot W|\,+\,
\big|\Big(\nabla u(Y) -|\nabla u(Y)|e_{n+1}\Big)\cdot W\big|\\[4pt]
\lesssim\, \eps^{2M-5}\delta(Y) +\eps^{M-3} |W|
\,\lesssim \, \eps^{M-3}\ell(Q) \,, \quad \forall\,W\in B_Y\,,
\end{multline}
since $\delta(Z) \approx \ell(Q),$ for all $Z\in U^i_Q$ (so in particular, for $Z=Y$), and since
$|W|\leq 2\delta(Y) \lesssim \ell(Q)$, for all $W\in B_Y$.
Thus, \eqref{eq6.28} holds with $k=1$, since $Y_1\in B_Y$, by construction.

Now suppose that \eqref{eq6.28} holds for
all $1\leq i\leq k$, with $k\leq N$.   
Let $W\in B_{Y_k}$, so that
$W$ may be joined to $Y_k$ by a line segment of length less than $\delta(Y_{k})
\lesssim \eps^{-3}\ell(Q)$ (the latter bound holds by \eqref{eq5.27a}).
We note also  that if $k\leq N-1$, and
if $W=Y_{k+1}$, then this line segment has length at most $\ell(Q)$, by construction.
Then
\begin{multline*}
\big|u(W)-|\nabla u(Y)|W_{n+1}\big| \\[4pt]
\leq\,|u(W)-u(Y_{k})
+|\nabla u(Y)|\langle (Y_{k}-W),e_{n+1}\rangle\big|\,+\,
\big|u(Y_{k})  -|\nabla u(Y)|\langle Y_{k},e_{n+1}\rangle\big|\\[4pt]
= \big|(W-Y_{k})\cdot \nabla u(W_1) +|\nabla u(Y)|\langle (Y_{k}-W),e_{n+1}\rangle\big|
\,+\, O\left(k\,\eps^{M-3} \ell(Q)\right)\,,
\end{multline*}
where $W_1$ is an appropriate point on the line segment joining $W$ and $Y_k$, and
where we have used that $Y_{k}$ satisfies \eqref{eq6.28}.
By \eqref{eq6.26}, applied to $W_1$, we find in turn that
\begin{equation}\label{eq6.30}
\big|u(W)-|\nabla u(Y)|W_{n+1}\big|
\lesssim
\eps^{M-3}\, |W-Y_k|
+
k\, \eps^{M-3} \ell(Q)\,,
\end{equation}
which, by our previous observations, is bounded by $C(k+1)\eps^{M-3} \ell(Q)$, if $W=Y_{k+1}$,
or by $(\eps^{M-6} +\,k\,\eps^{M-3} )\ell(Q),$ in general.  In the former case, we find that
\eqref{eq6.28} holds for all $k=1,2,\dots,N$, and in the latter case, taking $k=N\lesssim
\eps^{-4}$, we obtain \eqref{eq6.28a}.
\bigskip
\end{proof}

\begin{claim}\label{claim6.32} Let $X\in \tU^i_Q$, and
let $w\in\pom\cap \partial\tB_X$.  Then
\begin{equation}\label{eq6.33}
|\nabla u(Y)|\, |w_{n+1}|
\lesssim
\eps^{M/2}\ell(Q)\,.
\end{equation}
\end{claim}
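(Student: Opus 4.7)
The plan is to derive \eqref{eq6.33} by combining the estimate from Claim \ref{claim6.27} (which says $u$ is well-approximated by the linear function $|\nabla u(Y)|\,W_{n+1}$ on $B_X$) with the boundary vanishing information furnished by Lemma \ref{l5.14}. The point is that $w$ lies in $\pom$, so $u(w)=0$, while the linear approximation forces $|\nabla u(Y)|\,w_{n+1}$ to be small.

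Here is how I would execute it. Fix $X\in\tU^i_Q$ and $w\in\pom\cap\partial\tB_X$, and let $W$ denote the radial projection of $w$ onto $\partial B_X$, so that
\begin{equation*}
|w-W|\,=\,\eps^{2M/\alpha}\delta(X)\,\lesssim\,\eps^{2M/\alpha-3}\ell(Q),
\end{equation*}
using \eqref{eq5.27a}. Since $W\in B_X\subset\bigcup_{Z\in \tU^i_Q}B_Z$, we may apply \eqref{eq6.28a} to conclude
\begin{equation*}
\big|u(W)-|\nabla u(Y)|\,W_{n+1}\big|\,\lesssim\,\eps^{M-7}\ell(Q).
\end{equation*}
On the other hand, Lemma \ref{l5.14}, specifically \eqref{eq5.16a}, yields
\begin{equation*}
u(W)\,\lesssim\,\eps^{2M-5}\ell(Q).
\end{equation*}
Combining the last two displays gives
\begin{equation*}
|\nabla u(Y)|\,|W_{n+1}|\,\lesssim\,\eps^{M-7}\ell(Q)+\eps^{2M-5}\ell(Q)\,\lesssim\,\eps^{M-7}\ell(Q).
\end{equation*}

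To pass from $W$ back to $w$, I would use that $|w_{n+1}-W_{n+1}|\leq |w-W|\lesssim \eps^{2M/\alpha-3}\ell(Q)$, together with the upper bound $|\nabla u(Y)|\lesssim 1$ coming from \eqref{eq4.17a}. Thus
\begin{equation*}
|\nabla u(Y)|\,|w_{n+1}|\,\leq\,|\nabla u(Y)|\,|W_{n+1}|\,+\,|\nabla u(Y)|\,|w_{n+1}-W_{n+1}|\,\lesssim\,\eps^{M-7}\ell(Q)\,+\,\eps^{2M/\alpha-3}\ell(Q).
\end{equation*}
Since $\alpha<1$ we have $2M/\alpha-3>M-7$ for $M$ large, and since $\eps<1$ and $M\geq 14$ we have $\eps^{M-7}\leq\eps^{M/2}$. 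This yields \eqref{eq6.33}, provided $M$ is chosen sufficiently large (depending only on $\alpha$ and dimension).

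There is no genuine obstacle here: both ingredients (Claim \ref{claim6.27} and Lemma \ref{l5.14}) have already been established, and the argument is simply the triangle inequality on $\partial\tB_X$ followed by the linear approximation. The only delicate bookkeeping is making sure that the exponents of $\eps$ add up favorably, which forces the lower bound on $M$; but since in the global scheme $M$ is at our disposal (to be taken as large as needed at the end of the Case~2 analysis), this causes no difficulty.
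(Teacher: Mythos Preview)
Your proof is correct and follows essentially the same approach as the paper: project $w$ radially to $W\in\partial B_X$, apply \eqref{eq6.28a} and \eqref{eq5.16a} to control $|\nabla u(Y)|\,|W_{n+1}|$, and then use $|w-W|\lesssim\eps^{2M/\alpha-3}\ell(Q)$ together with $|\nabla u(Y)|\lesssim 1$ to pass back to $w_{n+1}$. The paper organizes the same three ingredients as a single triangle-inequality decomposition of $|\nabla u(Y)|\,|w_{n+1}|$, but the content is identical. (One small remark: you do not actually need $\alpha<1$; the inequality $2M/\alpha-3\geq M/2$ holds for $M$ large whenever $\alpha$ is bounded, which is automatic here.)
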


\begin{proof}[Proof of Claim \ref{claim6.32}]
Let $W$ be the radial projection of $w$ onto $\partial B_X$, so that
\begin{equation}\label{eq6.34}
|W-w| = \eps^{2M/\alpha}\delta(X) \lesssim \eps^{(2M/\alpha) -3}\ell(Q)
\,,\end{equation}  
by \eqref{eq5.27a}.
We write
\begin{multline*}
|\nabla u(Y)|\, |w_{n+1}| 
\,\leq \,
 |\nabla u(Y)|\,|W-w|\,+\,
\big|u(W) -|\nabla u(Y)|W_{n+1}\big| \,+\,
u(W) 
 \\[4pt]
=:I+II+u(W).
\end{multline*}
Note that $I\lesssim \eps^{(2M/\alpha)-3}\ell(Q)$, by \eqref{eq6.34} and \eqref{eq4.17a} (recall that $Y=Y_Q$),
and that $II \lesssim \eps^{M-7}\ell(Q)$, by \eqref{eq6.28a}.
Furthermore, $u(W) \lesssim \eps^{2M-5} \ell(Q)$, by \eqref{eq5.16a}.
For $M$ chosen large enough, we obtain \eqref{eq6.33}.
\end{proof}

We note that since we have fixed $Y=Y_Q$, 
 it then follows from \eqref{eq6.33} and \eqref{eq4.17a} that
\begin{equation}\label{eq5.35}
|w_{n+1}|\,\lesssim \,\eps^{M/2} \ell(Q)\,,\quad \forall\, w\in\pom\cap\partial\tB_X\,,\quad \forall X\in \tU^i_Q\,.
\end{equation}

Recall now that $x_Q$ denotes the ``center" of $Q$ (see \eqref{cube-ball}-\eqref{cube-ball2}).
Set
\begin{equation}\label{eq5.41a}
\mathcal{O}:= B\left(x_Q,2\eps^{-2}\ell(Q)\right) \cap \left\{W:W_{n+1}> \eps^2 \ell(Q)\right\}\,.
\end{equation}

\begin{claim}\label{claim5.40} For every point  $X\in \oo$, we have
$X\approx_{\eps,Q} Y$ (see Definition \ref{def2.11a}).
Thus, in particular, $\oo\subset \tU^i_Q.$
\end{claim}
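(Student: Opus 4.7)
The plan is to construct, for each $X\in\mathcal{O}$, an explicit chain of at most $\eps^{-1}$ balls of the form $B(Z_k,\delta(Z_k)/2)$ connecting $X$ to $Y=\delta(Y)e_{n+1}$, with each $\delta(Z_k)$ in the range $[\eps^3\ell(Q),\eps^{-3}\ell(Q)]$ and with consecutive balls overlapping. The chain will go up from $X$ to a point high above $\partial\Omega$, traverse horizontally, and descend to $Y$; the key feature is that higher up we can afford to use larger balls and hence make longer steps.

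First I would observe that $Y\in\mathcal{O}$: since $Y=\delta(Y)e_{n+1}$ with $\delta(Y)\approx\ell(Q)$, one has $Y_{n+1}\gg\eps^2\ell(Q)$ and $|Y-x_Q|\lesssim\ell(Q)\ll\eps^{-2}\ell(Q)$. Next, using \eqref{eq5.35} applied at points already verified to lie in $\widetilde U_Q^i$, I would establish a slab estimate in a neighborhood of $\mathcal O$:
\begin{equation*}
\partial\Omega\cap B\bigl(x_Q,3\eps^{-2}\ell(Q)\bigr)\ \subset\ \bigl\{|W_{n+1}|\le C\eps^{M/2}\ell(Q)\bigr\}.
\end{equation*}
As a consequence, for any $X\in\mathcal O$ the closest boundary point has $n+1$-coordinate $O(\eps^{M/2}\ell(Q))$, whence
\begin{equation*}
\tfrac12\eps^2\ell(Q)\ \le\ X_{n+1}-C\eps^{M/2}\ell(Q)\ \le\ \delta(X)\ \le\ |X|+\delta(Y)\ \lesssim\ \eps^{-2}\ell(Q),
\end{equation*}
provided $M$ is large enough (and $\eps<\eps_0$). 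In particular, $\delta(Z)\in[\eps^3\ell(Q),\eps^{-3}\ell(Q)]$ for every $Z$ we will encounter.

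With these estimates, the chain from $X\in\mathcal O$ to $Y$ is built in three phases. (a) \emph{Ascent:} set $Z_0=X$ and iteratively take $Z_{k+1}=Z_k+\tfrac14\delta(Z_k)e_{n+1}$. Because $\delta(Z_{k+1})\approx(Z_{k+1})_{n+1}\approx\tfrac54\delta(Z_k)$, the heights grow geometrically, so after $O(\log\eps^{-1})$ steps we reach a point $X^{\uparrow}$ with $X^{\uparrow}_{n+1}\approx\eps^{-2}\ell(Q)/2$. By construction $Z_{k+1}\in B(Z_k,\delta(Z_k)/2)$, so consecutive balls overlap. (b) \emph{Traverse:} move horizontally from $X^{\uparrow}$ to $Y^{\uparrow}:=Y+\eps^{-2}\ell(Q)e_{n+1}$; here $\delta\approx\eps^{-2}\ell(Q)$ and the distance is $\lesssim\eps^{-2}\ell(Q)$, so $O(1)$ balls suffice. (c) \emph{Descent:} mirror the ascent to go from $Y^{\uparrow}$ down to $Y$ in $O(\log\eps^{-1})$ steps. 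The total length of the chain is $O(\log\eps^{-1})\le\eps^{-1}$ once $\eps$ is small enough.

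The main obstacle is the bootstrap required to justify the slab estimate in the second paragraph. The statement \eqref{eq5.35} only applies a priori at points already in $\widetilde U_Q^i$, whereas we need control on $\partial\Omega$ throughout a neighborhood of all of $\mathcal O$ to carry out the chain construction that itself defines $\widetilde U_Q^i$. I would resolve this circularity by constructing, in parallel with the chain, an increasing sequence of points $Y=\Xi_0,\Xi_1,\dots\in\widetilde U_Q^i$ whose horizontal projections cover the projection of $\mathcal O$: each $\Xi_{k+1}$ is added by a short subchain from $\Xi_k$ (staying at height $\sim\eps^{-2}\ell(Q)$, so $\delta(\Xi_k)\sim\eps^{-2}\ell(Q)$), and the slab estimate then propagates via \eqref{eq5.35} applied at $\Xi_k$ to the ball $\widetilde B_{\Xi_k}$. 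Once this iterative control on $\partial\Omega$ is in place, the three-phase construction of the chain goes through verbatim and shows $X\approx_{\eps,Q}Y$.
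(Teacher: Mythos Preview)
Your proposal is correct and follows essentially the same approach as the paper: an iterative bootstrap using \eqref{eq5.35} to control $\delta(\cdot)$ along a three-phase chain (vertical ascent, horizontal traverse at large height, vertical descent) of total length $O(\log(1/\eps))\le\eps^{-1}$. The paper organizes the argument slightly differently---it first handles points on the vertical axis $te_{n+1}$ by iteration starting from $Y$, then goes up to height $(2\eps)^{-3}\ell(Q)$, across, and back down---whereas you phrase the chain as starting from $X$ and then separately address the circularity; but your explicit discussion of the bootstrap (building the $\Xi_k$ outward from $Y$ so that \eqref{eq5.35} is always applied at points already known to lie in $\widetilde U^i_Q$) is exactly what the paper compresses into the phrase ``by an iteration argument using \eqref{eq5.35}.''
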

\begin{proof}[Proof of Claim \ref{claim5.40}]  Let $X\in \oo$.
We need to show that $X$ may be connected to $Y$ by a chain of
at most $\eps^{-1}$ balls of the form $B(Y_k,\delta(Y_k)/2)$, with
$\eps^3\ell(Q)\leq\delta(Y_k)\leq \eps^{-3}\ell(Q)$ (for convenience, we shall refer to such balls as
``admissible").   We first observe that
if $X = te_{n+1}$, with $\eps^{3} \ell(Q) \leq t\leq \eps^{-3}\ell(Q)$, then by an iteration argument using \eqref{eq5.35}
(with $M$ chosen large enough),
we may join $X$ to $Y$ by at most $C\log(1/\eps)$ admissible balls.  The
point $(2\eps)^{-3} \ell(Q) e_{n+1}$ may then be joined to any point of the form $(X', (2\eps)^{-3} \ell(Q))$
by a chain of at most $C$ admissible balls, whenever
$X'\in \rn$ with $|X'|\leq \eps^{-3}\ell(Q)$.  In turn,  the latter point may then be joined
to $(X', \eps^{3} \ell(Q))$.
\end{proof}

We note that Claim \ref{claim5.40} implies that
\begin{equation}\label{eq5.41}
\pom\cap \oo =\emptyset\,.
\end{equation}
Indeed, $\oo\subset \tU^i_Q\subset \Omega$.

Let $P_0$ denote the hyperplane
$$P_0:=\{Z:\,Z_{n+1} = 0\}\,.$$

\begin{claim}\label{claim5.42}
If $Z\in P_0$, with
$|Z-x_Q|\leq \eps^{-2}\ell(Q)$, then
\begin{equation}\label{eq5.43}
\delta(Z)=\dist(Z,\pom) \leq 16 \eps^2\ell(Q)\,.
\end{equation}
\end{claim}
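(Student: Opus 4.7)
The approach is by contradiction: suppose $\delta(Z) > 16\eps^2\ell(Q)$. The principle that drives the proof is that \eqref{eq6.28a}, combined with $|\nabla u(Y)|\approx 1$ from \eqref{eq4.17a} and the non-negativity of $u$, forces the one-sided control
$$
W_{n+1}\,\geq\,-C\,\eps^{M-7}\,\ell(Q), \qquad \forall\, W\in B_X,\,\, \forall\, X\in \tU^i_Q,
$$
since otherwise \eqref{eq6.28a} would give $u(W)<0$. In particular, no $B_X$ can extend significantly below the hyperplane $P_0$. The strategy is then to produce an $X\in \tU^i_Q$ sitting at vertical distance $\sim \eps^2\ell(Q)$ below $P_0$; then $B_X$, having radius comparable to $\delta(X)\gtrsim \eps^2\ell(Q)$, must contain a point with $(n+1)$-coordinate of order $-\eps^2\ell(Q)$, contradicting the above bound once $M$ is large enough.

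To build this $X$, set $X := Z-8\eps^2\ell(Q)\,e_{n+1}$ and $X^+:=Z+8\eps^2\ell(Q)\,e_{n+1}$. Under the contradiction hypothesis, $B(Z,16\eps^2\ell(Q))\cap\pom=\emptyset$, hence $\delta(X)\geq 8\eps^2\ell(Q)$. The point $X^+$ lies in the set $\oo$ from \eqref{eq5.41a}, and the comment following \eqref{eq5.41} gives $\oo\subset\tU^i_Q\subset\om$; the segment from $X^+$ to $X$ sits inside the $\pom$-free ball $B(Z,8\eps^2\ell(Q))$, so by connectivity $X\in\om$. To confirm that $X\in\tU^i_Q$, I would extend the chain from the proof of Claim \ref{claim5.40} (which joins $X^+$ to $Y$ in $C\log(1/\eps)$ admissible balls) by prepending a handful of admissible balls whose centers lie on the segment from $X^+$ to $X$; each such center $W'$ satisfies $\delta(W')\in[8\eps^2\ell(Q),\,2\eps^{-2}\ell(Q)]\subset[\eps^3\ell(Q),\,\eps^{-3}\ell(Q)]$ for $\eps$ small, and the total chain length stays well below $\eps^{-1}$.

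Finally, take $W:=X-\tfrac{1}{2}\delta(X)\,e_{n+1}$. Since $\delta(X)/2<(1-\eps^{2M/\alpha})\delta(X)$ for $\eps$ small, $W\in B_X$, and $W\in\om$ because $B(X,\delta(X))$ is a $\pom$-free ball containing both $X$ and $W$. A direct computation gives $W_{n+1}\leq -12\eps^2\ell(Q)$, which together with the one-sided bound above forces $12\eps^2\leq C\eps^{M-7}$, an inequality that fails for $M$ large enough and $\eps<\eps_0$ small. This produces the desired contradiction. The only mildly delicate point is the bookkeeping that confirms the extended chain remains admissible; conceptually, the heart of the argument is recognizing that the sign of $u$ (equivalently, that $\Omega$ lies on one side of $\pom$ within $\tU^i_Q$) upgrades the two-sided approximation \eqref{eq6.28a} into a one-sided geometric constraint on $B_X$, which is precisely sharp enough to pin down the location of $\pom$ near $Z$.
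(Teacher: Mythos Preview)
Your argument is correct and rests on exactly the same principle as the paper's: the estimate \eqref{eq6.28a} together with $u\ge 0$ and $|\nabla u(Y)|\approx 1$ forbids any $W\in B_X$, $X\in\tU^i_Q$, from having $W_{n+1}$ of order $-\eps^2\ell(Q)$, which contradicts the assumption $\delta(Z)>16\eps^2\ell(Q)$.

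The only difference is in the choice of the point $X$. You place $X=Z-8\eps^2\ell(Q)e_{n+1}$ \emph{below} $P_0$ and must then verify $X\in\tU^i_Q$ by extending the Harnack chain from Claim \ref{claim5.40} across the $\pom$-free ball $B(Z,16\eps^2\ell(Q))$; this is the ``mildly delicate bookkeeping'' you flag. The paper avoids this entirely by choosing $X$ \emph{above} $P_0$: since $B(Z,2\eps^2\ell(Q))$ meets $\oo$, one may take $X\in\oo\cap B(Z,2\eps^2\ell(Q))$, and then $X\in\tU^i_Q$ is immediate from Claim \ref{claim5.40} with no chain extension needed. Under the contradiction hypothesis $\delta(Z)>16\eps^2\ell(Q)$ one has $\delta(X)\ge 14\eps^2\ell(Q)$, so $B(Z,4\eps^2\ell(Q))\subset B_X$, and one simply picks $W$ in this ball with $W_{n+1}=-\eps^2\ell(Q)$. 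This yields the same contradiction with less effort; your route works but the paper's choice of $X$ sidesteps the chain-extension bookkeeping altogether.
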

\begin{proof}[Proof of Claim \ref{claim5.40}]
Observe that $B(Z, 2 \eps^2\ell(Q))$ meets $\oo$.  Then by Claim
\ref{claim5.40}, there is a point $X\in \tU^i_Q \cap B(Z, 2 \eps^2\ell(Q))$.
Suppose now that \eqref{eq5.43} is false, so that $\delta(X) \geq14\eps^2\ell(Q).$
Then $B(Z, 4 \eps^2\ell(Q))\subset B_X$, so by \eqref{eq6.28a}, we have
\begin{equation}\label{eq5.44}
\left|u(W)-|\nabla u(Y)|W_{n+1}\right|
\leq
C\, \eps^{M-7}\ell(Q)\,,\quad \forall \, W\in
B(Z, 4 \eps^2\ell(Q))\,.
\end{equation}
In particular, since $Z_{n+1}=0$, we may choose $W$ such that $W_{n+1} = -\eps^2\ell(Q)$,
to obtain that
$$|\nabla u(Y)|\, \eps^2\ell(Q) 
\,\leq\, C \eps^{M-7}\ell(Q)\,,$$
since $u\geq 0$.  But for $\eps<1/2$, and $M$ large enough, 
this is a contradiction, by
\eqref{eq4.17a} (recall that we have fixed $Y=Y_Q$).
\end{proof}

It now follows by Definition \ref{def2.13} that $Q$ satisfies the $\eps$-local WHSA
condition, with
$$P=P(Q):= \{Z: \, Z_{n+1}= \eps^2\ell(Q)\}\,,\quad H=H(Q):= \{Z: \, Z_{n+1}>\eps^2\ell(Q)\} \,.$$
This concludes the proof of Lemma \ref{LVlemma}, and therefore also that of
Theorem \ref{t1}, modulo Proposition \ref{prop2.20}.
\end{proof}

\section{WHSA implies UR:  Proof of Proposition \ref{prop2.20}}\label{s7}
We suppose that $E$ satisfies the WHSA property.  Given a positive $\eps<\eps_0\ll K_0^{-6}$, we
let $\B_0$ denote the collection of bad cubes for which $\eps$-local WHSA fails.
By Definition \ref{def2.14}, $\B_0$ satisfies the Carleson packing condition
\eqref{eq2.pack2}.  We now introduce a variant of the packing measure for
$\B_0$.
We recall that $B^*_Q= B(x_Q, K_0^2\ell(Q)),$ and given $Q\in\dd(E)$, we set
\begin{equation}\label{eq5.8aa}
\dd_\eps(Q):=\left\{Q'\in\dd(E):\, \eps^{3/2}\ell(Q)\leq \ell(Q')\leq \ell(Q),\,Q'\,{\rm meets}\,\, B_Q^*
\right\}\,.
\end{equation}
Set
\begin{equation}\label{eq4.0}
\alpha_Q:=\left\{
\begin{array}{ll}
\sigma(Q)\,,&{\rm if}\,\, \B_0\cap \dd_\eps(Q)\neq \emptyset,
\\[6pt]
0\,,&{\rm otherwise},
\end{array}
\right.
\end{equation}
and define
\begin{equation}\label{eq4.1}
\mut(\dd'):= \sum_{Q\in\dd'}\alpha_Q\,,\qquad \dd'\subset\dd(E)\,.
\end{equation}
Then $\mut$ is a discrete Carleson measure, with
\begin{equation}\label{eq6.2}
\mut(\dd_{Q_0})\, =\sum_{Q\subset Q_0}\alpha_Q \leq\, C_\eps\,\sigma(Q_0)\,,\qquad Q_0\in \dd(E)\,.
\end{equation}
Indeed, note that
for any $Q'$, the cardinality of $\{Q:\, Q'\in \dd_\eps(Q)\}$, is uniformly bounded, depending on $n$,
$\eps$ and $ADR$, and that $\sigma(Q)\leq C_\eps \sigma(Q')$, if $Q'\in\dd_\eps(Q)$.   Then
given any
$Q_0\in\dd(E)$,
\begin{multline*}
\mut(\dd_{Q_0})\, =\sum_{Q\subset Q_0:\,\B_0\cap\dd_\eps(Q)\neq\emptyset}\sigma(Q)\,
\leq \sum_{Q'\in \B_0}\,\sum_{Q\subset Q_0:\, Q'\in \dd_\eps(Q)}\sigma(Q) \\[4pt]
\leq \, C_\eps\!\!\sum_{Q'\in \B_0:\, Q' \subset 2B_{Q_0}^*}\sigma(Q')\,\leq\,\C_\eps\, \sigma(Q_0)\,,
\end{multline*}
by \eqref{eq2.pack2} and ADR.

To prove Proposition \ref{prop2.20}, we are required to show that
the collection $\B$ of bad cubes for which the $\sqrt{\eps}$-local BAUP condition fails, satisfies a packing
condition.  That is, we shall establish the discrete Carleson measure estimate
\begin{equation}\label{eq6.3}
\mutt(\dd_{Q_0})\, =\sum_{Q\subset Q_0:\, Q\in\B}\sigma(Q) \leq\, C_\eps\,\sigma(Q_0)\,,\qquad Q_0\in \dd(E)\,.
\end{equation}
To this end,  by \eqref{eq6.2}, it suffices to show that if 
$Q\in \B$, then $\alpha_Q\neq 0$ (and thus $\alpha_Q=\sigma(Q)$, by definition).
In fact, we shall prove the contrapositive statement.
\begin{claim}\label{claim6.15} Suppose then that $\alpha_Q = 0$. 
Then $\sqrt{\eps}$-local BAUP condition holds for $Q$.
\end{claim}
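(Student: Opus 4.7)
The plan is to construct the family $\mathcal{P}$ of hyperplanes required by the $\sqrt{\eps}$-local BAUP directly from the hyperplanes produced by the $\eps$-local WHSA. The hypothesis $\alpha_Q = 0$ means, by \eqref{eq4.0}, that every $Q' \in \dd_\eps(Q)$ satisfies the $\eps$-local WHSA, and hence comes equipped with a hyperplane $P(Q')$ and half-space $H(Q')$ as in Definition \ref{def2.13}. I would then take $\mathcal{P}$ to be the collection of planes $P(Q')$ as $Q'$ ranges over cubes at the smallest admissible scale, of size about $\eps^{3/2}\ell(Q)$, meeting $10Q$.

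More precisely, let $\F$ denote the collection of $Q' \in \dd(E)$ at the fixed dyadic generation with $\ell(Q') \approx \eps^{3/2}\ell(Q)$ and $Q' \cap 10Q \ne \emptyset$, and set $\mathcal{P} := \{P(Q') : Q' \in \F\}$. For $K_0$ fixed sufficiently large, every $Q' \in \F$ sits inside $B_Q^*$, so $\F \subset \dd_\eps(Q)$, and $\alpha_Q = 0$ supplies a hyperplane $P(Q')$ for each $Q' \in \F$. Since the cubes at a fixed generation partition $E$, any $x \in 10Q$ lies in some $Q' \in \F$, and property (2) of Definition \ref{def2.13}, together with $\eps \ll K_0^{-6}$, gives
\[
\dist(x, P(Q')) \,\le\, \diam(Q') + \dist(Q', P(Q')) \,\lesssim\, K_0^{3/2}\,\eps^{3/2}\,\ell(Q) \,\le\, \sqrt{\eps}\,\ell(Q),
\]
which verifies the first requirement of the $\sqrt{\eps}$-local BAUP.

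For the second requirement, the key observation is that $B_{Q'}^{**}(\eps) = B(x_{Q'},\eps^{-2}\ell(Q'))$ has radius of order $\eps^{-1/2}\ell(Q)$, and comfortably contains $B(x_Q, 10\diam(Q))$ (since $|x_{Q'}-x_Q| \lesssim \ell(Q)$ while $\eps^{-1/2} \gg 10$). Hence any $Z \in P(Q') \cap B(x_Q, 10\diam(Q))$ automatically lies in $P(Q') \cap B_{Q'}^{**}(\eps)$, and property (1) of Definition \ref{def2.13} yields $\dist(Z,E) \le \eps\,\ell(Q') \le \sqrt{\eps}\,\ell(Q)$, as required.

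The main thing needing care is just the scale bookkeeping: one must choose $K_0$ large enough that every $Q' \in \F$ lies in $B_Q^*$, and then exploit $\eps \ll K_0^{-6}$ to pass from the WHSA thresholds ($\eps\ell(Q')$ and $K_0^{3/2}\ell(Q')$) to the BAUP threshold $\sqrt{\eps}\ell(Q)$. Notably, the half-space clause (3) of Definition \ref{def2.13} plays no role in this deduction, since BAUP asks only for bilateral approximation by planes and does not record on which side of them $E$ lies. Once Claim \ref{claim6.15} is in hand, the packing bound \eqref{eq6.3} follows via the contrapositive from \eqref{eq6.2}, completing the proof of Proposition \ref{prop2.20} through Theorem \ref{t2.7}.
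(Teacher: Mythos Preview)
Your argument is correct, and in fact considerably simpler than the paper's. Both proofs start from the observation that $\alpha_Q=0$ forces every $Q'\in\dd_\eps(Q)$ to satisfy the $\eps$-local WHSA, but they diverge sharply thereafter. The paper works with the plane $P=P(Q)$ attached to $Q$ itself, and then, if $10Q$ is not already within $\sqrt{\eps}\,\ell(Q)$ of $P$, introduces a \emph{single} auxiliary cube $Q'$ (at scale $\approx\eps^{3/4}\ell(Q)$) and its plane $P'$. The bulk of the paper's proof is then devoted to showing, via the half-space condition (3) of Definition~\ref{def2.13}, that the normal to $P'$ makes an angle $\lesssim\eps$ with that of $P$, and that $P\cup P'$ already captures all of $10Q$ to within $\sqrt{\eps}\,\ell(Q)$ (Claims~\ref{claim:nu'-down} and~\ref{claim5.18}). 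The payoff is a family $\mathcal{P}$ with at most two elements.

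Your route bypasses all of this geometry: you take $\mathcal{P}$ to be the (large) family of planes $P(Q')$ indexed by the cubes $Q'$ at scale $\approx\eps^{3/2}\ell(Q)$ meeting $10Q$, and simply read off both BAUP conditions from WHSA properties (1) and (2) for each $Q'$, using only the containment $B(x_Q,10\diam Q)\subset B_{Q'}^{**}(\eps)$ and the scale arithmetic $K_0^{3/2}\eps^{3/2}\ll\sqrt{\eps}$. As you note, the half-space clause (3) is never invoked. The trade-off is transparency versus sharpness: the paper obtains a much stronger geometric conclusion (two planes suffice, and their normals are nearly aligned), which is closer in spirit to the Weak Exterior Convexity condition of \cite{LV}; your argument gives a family whose cardinality blows up like $\eps^{-3n/2}$, but since Definition~\ref{def2.4} imposes no bound on $\#\mathcal{P}$, this is immaterial for the purpose at hand. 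A minor point: $K_0$ is already fixed by the WHSA hypothesis, so rather than ``choosing $K_0$ large enough'' one simply uses that $10Q\subset B_Q^*$ follows from $K_0\gg 1$.
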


\begin{proof}[Proof of Claim \ref{claim6.15}]  
We first note that since $\alpha_Q=0$, then
by definition of $\alpha_Q$,
\begin{equation}\label{eq6.14}
\B_0\cap \dd_\eps(Q) =\emptyset\,. 
\end{equation}
Thus, the $\eps$-local WHSA condition (Definition \ref{def2.13})
holds for every $Q'\in \dd_\eps(Q)$ (in particular,
for $Q$ itself).  By rotation and translation, we may suppose that the hyperplane $P= P(Q)$ in Definition
\ref{def2.13} is
$$P = \left\{Z\in \ree:\, Z_{n+1} = 0\right\}\,,$$
and that the half-space $H=H(Q)$ is
the upper half-space $\reu = \{Z:\, Z_{n+1}>0\}$.
We recall that by Definition \ref{def2.13}, $P$ and $H$ satisfy
\begin{equation}\label{eq6.16}
\dist(Z,E)\leq\eps\ell(Q)\,, \qquad \forall \, Z\in P\cap B_Q^{**}(\eps)\,.
\end{equation}
\begin{equation}\label{eq6.16a}
\dist(P,Q)\leq K_0^{3/2}\ell(Q)\,,
\end{equation}
and
\begin{equation}\label{eq6.17a}
H\cap B_Q^{**}(\eps)\cap E=\emptyset\,.
\end{equation}

The proof will now follow a similar construction in \cite{LV},
which was used to establish the Weak Exterior Convexity condition.
By \eqref{eq6.17a}, there are two cases.

\smallskip

\noindent {\bf Case 1}:  $10Q \subset \{ Z:\, -\sqrt{\eps}\ell(Q) \leq Z_{n+1} \leq 0\}$.
In this case, the $\sqrt{\eps}$-local BAUP condition holds trivially for $Q$, with $\P =\{P\}$.

\smallskip

\noindent{\bf Case 2}.  There is a point $x\in 10Q$ such that $x_{n+1}<-\sqrt{\eps}\ell(Q)$.
In this case, we choose $Q'\ni x$, with $\eps^{3/4}\ell(Q)\leq \ell(Q') < 2\eps^{3/4}\ell(Q).$
Thus,
\begin{equation}\label{eq6.19aa}
Q'\subset \big\{Z:\, Z_{n+1}\leq -\frac12\sqrt{\eps}\ell(Q)\big\}\,.
\end{equation}
Moreover, $Q'\in\dd_\eps(Q)$, so by \eqref{eq6.14}, $Q'\notin \B_0$, i.e.,
$Q'$ satisfies the $\eps$-local WHSA.  Let $P'=P(Q')$, and $H'=H(Q')$ denote the
hyperplane and half-space corresponding to $Q'$ in Definition \ref{def2.13}, so that
\begin{equation}\label{eq6.18}
\dist(Z,E)\leq\eps\ell(Q')\leq 2 \eps^{7/4}\ell(Q)\,, \quad \forall \, Z\in P'\cap B_{Q'}^{**}(\eps)\,,
\end{equation}
\begin{equation}\label{eq6.18a}
\dist(P',Q')\leq K_0^{3/2}\ell(Q')\approx K_0^{3/2} \eps^{3/4}\ell(Q) \ll\eps^{1/2}\ell(Q)
\end{equation}
(where the last inequality holds since $\eps \ll K_0^{-6}\,$), and
\begin{equation}\label{eq6.19}
H'\cap B_{Q'}^{**}(\eps)\cap E=\emptyset\,,
\end{equation}
where we recall that $B_{Q'}^{**}(\eps) := B\left(x_{Q'},\eps^{-2}\ell(Q')\right)$
(see \eqref{eq2.bstarstar}).
We note that
\begin{equation}\label{eq6.22a}
B_Q^*\subset \tB_Q(\eps):= B\left(x_Q, \eps^{-1}\ell(Q)\right) \subset B_{Q'}^{**}(\eps)\cap B_Q^{**}(\eps)\,,
\end{equation}
by construction, since $\eps\ll K_0^{-6}$. 
Let $\nu'$ denote the unit normal vector to $P'$, pointing into $H'$.  Then
$\nu'$ points ``downward", i.e., 
$\nu'\cdot e_{n+1} <0$, otherwise $H'\cap \tB_Q(\eps)$ would meet $E$, by \eqref{eq6.16},
\eqref{eq6.19aa}, and \eqref{eq6.18a}. 
Moreover,
by  \eqref{eq6.17a}, \eqref{eq6.18}, and the definition of $H$,
\begin{equation}\label{eqn:above}
P'\cap \tB_Q(\eps)\cap\{Z:Z_{n+1} >2\eps^{7/4}\,\ell(Q)\}=\emptyset\,.
\end{equation}
Consequently,
\begin{claim}\label{claim:nu'-down}
The angle $\theta$
between $\nu'$ and $-e_{n+1}$ satisfies $\theta \approx \sin\theta \lesssim \eps$.
\end{claim}
Indeed, since $Q'$ meets $10Q$, 
 \eqref{eq6.16a} and \eqref{eq6.18a} imply that
 $\dist(P,P')\lesssim K_0^{3/2} \ell(Q)$, and that the latter estimate is attained near $Q$.
By \eqref{eqn:above} and a trigonometric argument, 
one then obtains Claim \ref{claim:nu'-down} 
(more precisely, one obtains 
$\theta \lesssim K_0^{3/2} \eps$, but in this section,
we continue to use the notational convention that 
implicit constants may depend upon $K_0$, but $K_0$ is fixed, and  $\eps \ll K_0^{-6}\,$).
The interested reader could probably supply the remaining details of the argument that we have just sketched, 
but for the sake of completeness, we shall give the full proof at the end of this section.

We therefore take Claim  \ref{claim:nu'-down} for granted, 
and proceed with the argument.  We note 
first that every point in $(P\cup P')\cap B_Q^*$ is at a distance at most $\eps\ell(Q)$ from $E$,
by \eqref{eq6.16}, \eqref{eq6.18} and \eqref{eq6.22a}.
To complete the proof of Claim \ref{claim6.15}, it therefore remains
only to verify the following.  As with the previous claim, we shall provide a condensed 
proof immediately, and present a more detailed argument 
at the end of the section.
\begin{claim}\label{claim5.18}
Every point in $10Q$ lies within $\sqrt{\eps}\ell(Q)$
of a point in $P\cup P'$.  
\end{claim} 
Suppose not.   We could then repeat the previous argument,
to construct a cube $Q''$,
a hyperplane $P''$, a unit vector $\nu''$ forming a small angle with $-e_{n+1}$,
and a half-space $H''$ with boundary $P''$, 
with the same properties as  $Q',P', \nu'$ and $H'$.  In particular, we have the respective
analogues of \eqref{eq6.18a}, \eqref{eq6.19aa}, and \eqref{eq6.19},
namely
\begin{equation}\label{eq6.23}
\dist(P'',Q'')\leq K_0^{3/2}\ell(Q')\approx K_0^{3/2} \eps^{3/4}\ell(Q) \ll\eps^{1/2}\ell(Q)\,,
\end{equation}
\begin{equation}\label{eq6.24}
\dist(Q'',P') \geq \frac12 \sqrt{\eps}\ell(Q)\,,\quad {\rm and}\quad Q''\cap H'=\emptyset\,,
\end{equation}
and
\begin{equation}\label{eq6.25a}
H''\cap B_{Q''}^{**}(\eps)\cap E=\emptyset\,,
\end{equation}
In addition, as in \eqref{eq6.22a}, we also have $B_Q^*\subset B_{Q''}^{**}(\eps)$.
On the other hand, the angle between $\nu'$ and $\nu''$
is very small.   Thus, 
combining \eqref{eq6.18}, \eqref{eq6.23} and \eqref{eq6.24},
we see that  $H''\cap B_Q^*$ captures points in $E$, which contradicts
\eqref{eq6.25a}.

Claim \eqref{claim6.15} therefore holds (in fact, with a union of at most 2 planes), 
and thus we obtain the conclusion of Proposition \ref{prop2.20}.
 \end{proof}

We now provide detailed proofs of Claims \ref{claim:nu'-down} and \ref{claim5.18}.

\begin{proof}[Proof  of Claim \ref{claim:nu'-down}]
By \eqref{eq6.18a} we can pick $x'\in Q'$, $y'\in P'$ such that $|y'-x'|\ll \eps^{1/2}\,\ell(Q)$ and therefore $y'\in 11\,Q$. Also, from \eqref{eq6.16a} and \eqref{eq6.17a} we can find $\bar{x}\in Q$ such that $-K_0^{3/2}\,\ell(Q)<\bar{x}_{n+1}\le 0$. This and \eqref{eq6.19aa} yield
\begin{equation}\label{loc-y'}
-2\,K_0^{3/2}\,\ell(Q)<y'_{n+1}<-\frac14\,\sqrt{\eps}\,\ell(Q)
\end{equation}

Write $\pi$ to denote the orthogonal projection onto $P$. Let $Z\in P$ (i.e., $Z_{n+1}=0$) be such that $|Z-\pi(y')|\le K_0^{3/2}\,\ell(Q)$. Then, $Z\in B(x_Q,3\,K_0^{3/2}\,\ell(Q))\subset B_Q^*$. Hence $Z\in P\cap B_Q^{**}(\eps)$ and by \eqref{eq6.16}, $\dist(Z,E)\le\eps\,\ell(Q)$. Then there exists $x_Z\in E$ with $|Z-x_Z|\le \eps\,\ell(Q)$ which in turn implies that $|(x_Z)_{n+1}|\le \eps\,\ell(Q)$. Note that $x_Z\in B(x_Q,4\,K_0^{3/2}\,\ell(Q))\subset B_Q^*$ and by \eqref{eq6.22a} $x_Z\in E\cap \cap B_Q^{**}(\eps)\cap B_{Q'}^{**}(\eps)$. This,  \eqref{eq6.17a} and \eqref{eq6.19} imply that $x_Z\not\in H\cup H'$. Hence, $(x_Z)_{n+1}\le 0$ and  $(x_Z-y')\cdot\nu'\le 0$, since $y'\in P'$ and $\nu'$ denote the unit normal vector to $P'$ pointing into $H'$. Observe that by
by \eqref{loc-y'}
\begin{equation}\label{x-y'}
\frac18\,\sqrt{\eps}\,\ell(Q)
<
-\eps\,\ell(Q)
+
\frac14\,\sqrt{\eps}\,\ell(Q)
<
(x_Z-y')_{n+1}
<2\,K_0^{3/2}\,\ell(Q)
\end{equation}
and
\begin{multline}\label{inner-prod}
(x_Z-y')_{n+1}\,\nu'_{n+1}
\le
-\pi(x_Z-y')\cdot \pi(\nu')
\\
\le
|x_Z-z|-\pi(Z-y')\cdot \pi(\nu')
\le
\eps\,\ell(Q)-\pi(Z-y')\cdot \pi(\nu')
\end{multline}
and that,

We prove that $\nu'_{n+1}<-\frac18<0$ considering two cases:

\noindent {\bf Case 1:} $|\pi(\nu')|\ge \frac12$.

We pick
$$
Z_1=\pi(y')+K_0^{3/2}\,\ell(Q)\,\frac{\pi(\nu')}{|\pi(\nu')|}.
$$
By construction $Z_1\in P$ and  $|Z_1-\pi(y')|\le K_0^{3/2}\,\ell(Q)$. Hence we can use \eqref{inner-prod}
\begin{multline*}
(x_{Z_1}-y')_{n+1}\,\nu'_{n+1}
\le
\eps\,\ell(Q)-\pi(Z_1-y')\cdot \pi(\nu')
\\
=
\eps\,\ell(Q)-K_0^{3/2}\,\ell(Q)\,|\pi(\nu')|
\le
-\frac14\,K_0^{3/2}\,\ell(Q).
\end{multline*}
This together with \eqref{x-y'} give that $\nu'_{n+1}<-1/8<0$.

\smallskip

\noindent {\bf Case 2:} $|\pi(\nu')|< \frac12$.

This case is much simpler. Note first that $|\nu'_{n+1}|^2=1-|\pi(\nu')|^2>3/4$ and thus either $\nu'_{n+1}<-\sqrt{3}/2$ or $\nu'_{n+1}>\sqrt{3}/2$. We see that the second scenario leads to a contradiction. Assume then that $\nu'_{n+1}>\sqrt{3}/2$. We take $Z_2=\pi(y')\in P$ which clearly satisfies  and  $|Z_2-\pi(y')|\le K_0^{3/2}\,\ell(Q)$. Again \eqref{inner-prod} and \eqref{x-y'} are applicable
$$
\frac18\,\sqrt{\eps}\,\ell(Q)\,\frac{\sqrt{3}}{2}
<
(x_Z-y')_{n+1}\,\nu_{n+1}'
\le
\eps\,\ell(Q)
\ll
\sqrt{\eps}\,\ell(Q),
$$
and we get a contradiction. Hence necessarily $\nu'_{n+1}\le -\sqrt{3}/{2}<-1/8<0$.

\smallskip

Once we know that  $\nu'_{n+1}<-1/8<0$ we estimate $\theta$ the angle between $\nu'$ and $-e_{n+1}$. Note first $\cos\theta=-\nu'_{n+1}>1/8$. If $\cos\theta=1$ (which occurs if $\nu'=-e_{n+1}$) then $\theta=\sin\theta=0$ and the proof is complete. Assume then that $\cos\theta\neq 1$ in which case $1/8<-\nu'_{n+1}<1$ and hence $|\pi(\nu')|\neq 0$. Pick
$$
Z_3
=
y'+\frac{\ell(Q)}{2\,\eps}\,
\hat{\nu}',
\qquad\qquad
\hat{\nu}'=
\frac{e_{n+1}-\nu'_{n+1}\,\nu'}{|\pi(\nu')|}.
$$
Note that $\hat{\nu}'\dot\nu'=0$ and then $Z_3\in P'$ since $y'\in P'$. Also $|\hat{\nu}'|=1$ and therefore $|Z_3-y'|=\ell(Q)/(2\,\eps)$. This in turn gives that $Z_3\in \tB_Q(\eps)$. We have then obtained that $Z_3\in P'\cap\tB_Q(\eps)$ and hence $(Z_3)_{n+1}\le 2\eps^{7/4}\,\ell(Q)$ by \eqref{eqn:above}. This and \eqref{x-y'} easily give
\begin{multline*}
4\,K_0^{3/2}
\,\ell(Q)
\ge
2\eps^{7/4}\,\ell(Q)
\ge
(Z_3)_{n+1}
=
y'_{n+1}
+
\frac{\ell(Q)}{2\,\eps}\,\frac{1-(\nu'_{n+1})^2}{|\pi(\nu')|}
\\
=
y'_{n+1}
+
\frac{\ell(Q)}{2\,\eps}\,|\pi(\nu')|
\ge
-2\,K_0^{3/2}\,\ell(Q)+
\frac{\ell(Q)}{2\,\eps}\,|\pi(\nu')|.
\end{multline*}
This readily yields $|\sin \theta|=|\pi(\nu')|\le 8\,K_0^{3/2}\,\eps$ and the proof is then complete.
\end{proof}

\begin{proof}[Proof  of Claim \ref{claim5.18}]
We seek to show that every point in $10Q$ 
lies within $\sqrt{\eps}\ell(Q)$
of a point in $P\cup P'$.  
Suppose not and we will get a contradiction.

Assume then that there is $x'\in 10Q$ with $\dist(x',P\cup P')>\eps \,\ell(Q)$. In particular Then $x'_{n+1}<-\sqrt{\eps}\,\ell(Q)$
and we  repeat the previous argument,
to construct a cube $Q''$, a hyperplane $P''$, a unit vector $\nu''$ forming a small angle with $-e_{n+1}$,
and a half-space $H''$ with boundary $P''$,
with the same properties as  $Q',P', \nu'$ and $H'$.  In particular, we have the respective
analogues of \eqref{eq6.18a} and \eqref{eq6.19},
namely
\begin{equation}\label{eq6.23}
\dist(P'',Q'')\leq K_0^{3/2}\ell(Q')\approx K_0^{3/2} \eps^{3/4}\ell(Q) \ll\eps^{1/2}\ell(Q)\,,
\end{equation}
and
\begin{equation}\label{eq6.25a}
H''\cap B_{Q''}^{**}(\eps)\cap E=\emptyset\,,
\end{equation}
Also, 
$$
\eps\ell(Q)
\le
\dist(x',P')
\le
\diam(Q'')+\dist(Q'', P')
\le
\frac12\,\eps\,\ell(Q)+\dist(Q'', P'),
$$
and, by \eqref{eq6.19},
\begin{equation}\label{eq6.24}
\dist(Q'',P') \geq \frac12 \sqrt{\eps}\ell(Q)\,,\quad {\rm and}\quad Q''\cap H'=\emptyset\,,
\end{equation}
In addition, as in \eqref{eq6.22a}, we also have $B_Q^*\subset B_{Q''}^{**}(\eps)$.

By \eqref{eq6.23} there is $y''\in Q''$ and $z'' \in P''$ such that $|y''-z''|\ll \eps^{1/2}\,\ell(Q)$. By \eqref{eq6.25a} $y''\not\in H'$. Write $\pi'$ to denote the orthogonal projection onto $P'$ and note that \eqref{eq6.24} give $\dist(y'',P')=|y''-\pi'(y'')|\ge \frac12 \sqrt{\eps}\ell(Q)$. Note also that
$$
|y''-\pi'(y'')|
=\dist (y'', P')
\le
|y''-x'|+|x'-x|+\diam(Q')+dist(Q',P')
\le 11\diam(Q)
$$
and
$$
|\pi'(y'')-x_Q|
\le
|\pi'(y'')-y''|+|y''-x'|+|x'-x_Q|
<22\diam(Q)
<K^2\,\ell(Q).
$$
Hence $\pi'(y'')\in B_Q^*\subset\tB_Q(\eps)$ and since $\pi'(y'')\in P'$ we have that \eqref{eq6.18} give that there is $\tilde{y}\in E$ with $|\pi'(y'')-\tilde{y}|\le 2\,\eps^{7/4}\,\ell(Q)$. Then $\tilde{y}\in 23 Q\subset B_Q^*\cap E$ and $|\tilde{y}-z''|<12\,\diam(Q)$. To complete our proof we just need to show that $\tilde{y}\in H''$ which gets intro contradiction with \eqref{eq6.25a}.

Let us then proof that $\tilde{y}\in H''$. Write $\nu''$ to denote the unit normal vector to $P''$, pointing into $H''$ and let us claim that 
\begin{equation}\label{nu'-nu''}
|\nu'-\nu''|\le 16\,\sqrt{2}\,K_0^{2/3}\,\eps.
\end{equation}
Assuming this momentarily  and recalling that $y''\not\in H'$ we obtain. Then, 
\begin{multline*}
\frac12\,\sqrt{\eps}\,\ell(Q)
\le
|y''-\pi'(y'')|
=
(\pi(y'')-y'')\cdot\nu'
\\
\le
|\pi(y'')-\tilde{y}|
+
|\tilde{y}-z''|\,|\nu'-\nu''|+(\tilde{y}-z'')\cdot\nu''
+
|z''-y''|
\\
< 
\frac14\,\sqrt{\eps}\,\ell(Q)
+
(\tilde{y}-z'')\cdot\nu''.
\end{multline*}
This immediately gives that $(\tilde{y}-z'')\cdot\nu''>\frac14\,\sqrt{\eps}\,\ell(Q)>0$ and hence $y''\in H''$ as desired.

To complete the proof we obtain \eqref{nu'-nu''}. Note first that if $|\alpha|<\pi/4$ then
$$
1-\cos\alpha
=
1-\sqrt{1-\sin^2 \alpha}
\le
\sin^2\alpha.
$$
In particular we can apply this to $\theta$ (resp. $\theta'$) which is the angle between $\nu'$ (resp. $\nu''$) and $-e_{n+1}$
since $|\sin \theta|$, $|\sin\theta'|\le 8\,K_0^{3/2}\,\eps$:
$$
\sqrt{1-\cos\theta}
+
\sqrt{1-\cos\theta'}
\le
16\,K_0^{3/2}\,\eps
$$
Using the trivial formula
$$
|a-b|^2=2(1-a\dot b),
\qquad
\forall,\ a,b\in\ree,\ |a|=|b|=1.
$$
we conclude that
\begin{multline*}
|\nu'-\nu''|
\le
|\nu'-(-e_{n+1})|+|-(-e_{n+1})-\nu''|
=
\sqrt{2\,(1+\nu'\,e_{n+1})}
+
\sqrt{2\,(1+\nu''\,e_{n+1})}
\\[4pt]
=
\sqrt{2\,(1-\cos\theta)}
+
\sqrt{2\,(1-\cos\theta')}
\le
16\,\sqrt{2}\,K_0^{3/2}\,\eps.
\end{multline*}
This proves \eqref{nu'-nu''} and this completes the proof. 
\end{proof}

\section{Proof of Corollary \ref{c1}}\label{s-cor}

This result will follow almost immediately from Theorem \ref{t1}. Let $B=B(x, r)$ and
$\Delta=B\cap E$, with $x\in E$ and $0<r<\diam(E)$.  Let  $c$ be the constant
in Lemma \ref{Bourgainhm}.  By the ADR property of $E$, there is a point  $Y_{\Delta}\in B':= B(x, c\,r)$, which is
a Corkscrew point relative to the surface ball $ \Delta':= B'\cap E$, and therefore also
a Corkscrew point relative to $\Delta$, albeit with slightly different
Corkscrew constants now depending also on $c$, that is, there is $c_1$ such that $\delta(Y_\Delta)\ge c_1\,r$.  Note that $Y_\Delta$ satisfies the conditions in Theorem \ref{t1} and in order to apply that result we need to check the validity of $(a)$ and $(b)$. That $(a)$ holds is an immediate consequence of Lemma  \ref{Bourgainhm}. Let us now prove $(b)$. Given $C_1$ a large enough constant, write $\hat{\Delta}=C_1\Delta$. Cover $\hat{\Delta}$ by a collection of surface balls $\{\Delta_i\}_{i=1}^N$ with $\Delta_i=B_i\cap E=B(x_i,c_1\,r/4)$, $x_i\in \Delta$ and where $N$ is uniformly bounded. By construction $Y_\Delta\in\Omega\setminus 4\,B_i$ and by hypothesis $\hm^{Y_\Delta}\in$  weak-$A_\infty(2\,\Delta_i)$. Hence $\hm^{Y_\Delta}\ll\sigma$ in $2\Delta_i$ and \eqref{eqn:main-weak-RHP} holds with $Y_\Delta$ in place of $Y$ and with $\Delta'=\Delta_i$. Then, we clearly have that $\hm^{Y_\Delta}\ll\sigma$ in $\hat{\Delta}$ and if we write $k^{Y_\Delta}=d\hm^{Y_\Delta}/d\sigma$ we obtain
\begin{multline*}
\int_{\hat{\Delta}} k^{Y_\Delta}(z)^p\, d\sigma(z)
\le
\sum_{i=1}^N\int_{\Delta_i}k^{Y_\Delta}(z)^p\, d\sigma(z)
\lesssim
\sum_{i=1}^N
\sigma(\Delta_i)
\left(\fint_{2\,\Delta_i}k^{Y_\Delta}(z)\, d\sigma(z)\right)^p
\\[4pt]
\lesssim
\sum_{i=1}^N
\sigma(2\,\Delta_i)^{1-p}\,\omega^{Y_\Delta}(2\Delta_i)
\lesssim
\sigma(\hat{\Delta})^{1-p},
\end{multline*}
where in the last estimate we have used that $\omega(E)^{Y_\Delta}\lesssim 1$, the ADR property and and that $N$ is uniformly bounded. This gives $(b)$ in Theorem \ref{t1}, which in turn can be applied to obtain that $E$ is UR as desired. \qed

\parskip=0.1cm

\end{document}